\newtheorem{thm}{Theorem}[section]
\newtheorem*{thm*}{Theorem}
\newtheorem{lem}[thm]{Lemma}
\newtheorem*{lem*}{Lemma}
\newtheorem{cor}[thm]{Corollary}
\newtheorem{claim}[thm]{Claim}
\newtheorem{prop}[thm]{Proposition}
\theoremstyle{definition}
\newtheorem{assump}[thm]{Assumption}
\newtheorem*{case*}{Case}
\newtheorem{defn}[thm]{Definition}
\newtheorem*{defn*}{Definition}
\newtheorem{exmp}[thm]{Example}
\newtheorem*{exmp*}{Example}
\newtheorem{hyp}[thm]{Hypothesis}
\newtheorem{step}{Step}\renewcommand{\thestep}{}
\theoremstyle{remark}
\newtheorem{case}{Case}\renewcommand{\thecase}{}
\newtheorem{rmk}[thm]{Remark}
\newtheorem*{rmk*}{Remark}
\def\alphenumi{
  \def\theenumi{\alph{enumi}}
  \def\p@enumi{\theenumi}
  \def\labelenumi{(\@alph\c@enumi)}}
\def\thecase{\@arabic\c@case}
\def\thestep{\@arabic\c@step}
\def\hhmm{\number\hh:\ifnum\mm<10{}0\fi\number\mm}
\let\oldmarginpar\marginpar
\renewcommand\marginpar[1]{\-\oldmarginpar[\raggedleft\footnotesize #1]%
{\raggedright\footnotesize #1}}
\renewcommand\emptyset{\varnothing}
\newcommand\HH{\mathbb{H}}
\newcommand\RR{\mathbb{R}}
\newcommand\sA{{\mathscr{A}}}
\newcommand\sB{{\mathscr{B}}}
\newcommand\sC{{\mathscr{C}}}
\newcommand\eps{\varepsilon}
\newcommand\less{\setminus}
\newcommand\diag{\operatorname{diag}}
\DeclareMathOperator{\mydirac}{\slashed{\partial}}
\DeclareMathOperator{\Int}{int}
\newcommand\loc{\operatorname{loc}}
\newcommand\supp{\operatorname{supp}}
\newcommand\tr{\operatorname{tr}}
\numberwithin{equation}{section}
\begin{document}

\title[Schauder approach to degenerate-parabolic equations]{A Schauder approach to degenerate-parabolic partial differential equations with unbounded coefficients}

\author[P. M. N. Feehan]{Paul M. N. Feehan}
\address[PF]{Department of Mathematics, Rutgers, The State University of New Jersey, 110 Frelinghuysen Road, Piscataway, NJ 08854-8019, United States}
\email[PF]{feehan@math.rutgers.edu}

\author[C. A. Pop]{Camelia A. Pop}
\address[CP]{Department of Mathematics, University of Pennsylvania, 209 South 33rd Street, Philadelphia, PA 19104-6395, United States}
\email{cpop@math.upenn.edu}

%TODO arXiv version only
\date{August 8, 2013. Incorporates final galley proof corrections corresponding to published version: Journal of Differential Equations (2013) \textbf{254}, 4401--4445, \url{dx.doi.org/10.1016/j.jde.2013.03.006}.}

\begin{abstract}
Motivated by applications to probability and mathematical finance, we consider a parabolic partial differential equation on a half-space whose coefficients are suitably H\"older continuous and allowed to grow linearly in the spatial variable and which become degenerate along the boundary of the half-space. We establish existence and uniqueness of solutions in weighted H\"older spaces which incorporate both the degeneracy at the boundary and the unboundedness of the coefficients. In our companion article \cite{Feehan_Pop_mimickingdegen_probability}, we apply the main result of this article to show that the martingale problem associated with a degenerate-elliptic partial differential operator is well-posed in the sense of Stroock and Varadhan.
\end{abstract}

% AMS 2010 subject classifications (used in AMS journals)
% Primary
% 35J70  	Degenerate elliptic equations
% 35J86  	Linear elliptic unilateral problems and linear elliptic variational inequalities
% 49J40  	Variational methods including variational inequalities
% 35R45  	Partial differential inequalities
%
% Secondary
% 35R35  	Free boundary problems
% 60J60  	Diffusion processes
% 49J20  	Optimal control problems involving partial differential equations

\subjclass[2000]{Primary 35J70; secondary 60J60}
% ``2010'' not recognized and defaults to 1991

% AMS keywords (used in AMS journals)
\keywords{Degenerate-parabolic partial differential operator, degenerate diffusion process, Heston stochastic volatility process, mathematical finance, weighted and Daskalopoulos-Hamilton H\"older spaces}

% Acknowledge support
\thanks{PF was partially supported by NSF grant DMS-1059206. CP was partially supported by a Rutgers University fellowship. }

\maketitle
%TODO arXiv version only
\tableofcontents

\section{Introduction}
\label{sec:Intro}
Motivated by applications to probability theory and mathematical finance \cite{Antonov_Misirpashaev_Piterbarg_2009, Dupire1994, Gyongy, Piterbarg_markovprojection}, we use a Schauder approach to prove existence, uniqueness, and regularity of solutions to a \emph{degenerate}-parabolic partial differential equation with \emph{unbounded}, locally H\"older-continuous coefficients, $(a,b,c)$ with $a=(a^{ij})$ and $b=(b^i)$, generalizing both the \emph{Heston equation} \cite{Heston1993} and the model linear degenerate-parabolic equation used in the study of the
\emph{porous medium equation} \cite{DaskalHamilton1998, Daskalopoulos_Rhee_2003, Koch},
\begin{equation}
\label{eq:Problem}
\begin{aligned}
\begin{cases}
Lu=f & \hbox{ on  } \HH_T,\\
u(0,\cdot)=g & \hbox{  on   } \overline{\HH},
\end{cases}
\end{aligned}
\end{equation}
where $\HH := \RR^{d-1}\times(0,\infty)$ (with $d\geq 2$) denotes the half-space $\{x_d\geq 0\}$, and $\HH_T := (0,T) \times \HH$ is the open half-cylinder with $0<T<\infty$, and
\begin{equation}
\label{eq:Generator}
\begin{aligned}
-Lu = -u_t + \sum_{i,j=1}^d x_d a^{ij}u_{x_ix_j} + \sum_{i=1}^d b^iu_{x_i} + cu, \quad \forall\, u \in C^{1,2}(\HH_T).
\end{aligned}
\end{equation}
The operator $L$ becomes \emph{degenerate} along the boundary $\partial\HH = \{x_d=0\}$ of the half-space but in addition, unlike the model linear degenerate-parabolic equation considered in \cite{DaskalHamilton1998, Daskalopoulos_Rhee_2003, Koch}, the coefficients of \eqref{eq:Generator} are also permitted to grow linearly with $x$ as $x\to\infty$ and, even when the coefficients $b^i$ are constant, we do not require that $b^i=0$ when $i=1,\ldots,d-1$.

In our companion article \cite{Feehan_Pop_mimickingdegen_probability}, we apply the main result of the present article (Theorem \ref{thm:MainExistenceUniquenessPDE}) to prove that the martingale problem associated with the degenerate-elliptic partial differential operator acting on $v \in C^{2}(\overline\HH)$,
\begin{equation}
\label{eq:MartingaleGenerator}
\sA_tv(x) := \frac{1}{2}\sum_{i,j=1}^d x_da^{ij}(t,x)v_{x_ix_j}(x) + \sum_{i=1}^d b^i(t,x)v_{x_i}(x), \quad (t,x) \in [0,\infty)\times\HH,
\end{equation}
is well-posed in the sense of Stroock and Varadhan \cite{Stroock_Varadhan}. In \cite{Feehan_Pop_mimickingdegen_probability}, we then prove existence, uniqueness, and the strong Markov property for weak solutions to the associated stochastic differential equation with degenerate diffusion coefficient and unbounded diffusion and drift coefficients with suitable H\"older continuity properties. Finally, in \cite{Feehan_Pop_mimickingdegen_probability}, given an It\^o process with degenerate diffusion coefficient and unbounded but appropriately regular diffusion and drift coefficients, we prove existence of a strong Markov process, unique in the sense of probability law, whose one-dimensional marginal probability distributions match those of the given It\^o process.

\subsection{Summary of main results}
\label{subsec:Summary}
We describe our results outlined in the preamble to Section \ref{sec:Intro}. We shall seek a solution, $u$, to \eqref{eq:Problem} in a certain weighted H\"older space $\sC^{2+\alpha}_p(\overline{\HH}_T)$, given a source function, $f$, in a weighted H\"older space $\sC^{\alpha}_p(\overline{\HH}_T)$ and initial data, $g$, in a weighted H\"older space $\sC^{2+\alpha}_p(\overline{\HH})$. These weighted H\"older spaces generalize both the standard H\"older spaces as defined, for example, in \cite{Krylov_LecturesHolder, Lieberman} and the H\"older spaces defined with the cycloidal metric and introduced, independently, by Daskalopoulos and Hamilton \cite{DaskalHamilton1998} and Koch \cite{Koch}. We defer a detailed description of these H\"older spaces to Section \ref{subsec:DHKHolderSpaces}. However, the essential features of our H\"older spaces are that (i) near the boundary, $x_d=0$, of the half-space cylinder $\HH_T$, our H\"older spaces are equivalent to those of Daskalopoulos, Hamilton, and Koch and account for the
degeneracy of the operator $L$, (ii) polynomial weights in the definition of
our H\"older spaces allow for coefficients $(x_d a,b,c)$ in \eqref{eq:Generator} with up to linear growth near $x=\infty$ in the half-space cylinder $\HH_T$, and (iii) on compact subsets of the half-space cylinder $\HH_T$, our H\"older spaces are equivalent to standard H\"older spaces. We defer a detailed description of the conditions on the coefficients $(a,b,c)$ defining $L$ in \eqref{eq:Generator} to Section \ref{subsec:AssumptionsPDECoefficients} --- see Assumption \ref{assump:Coeff} on the properties of the coefficients of the parabolic differential operator. However, the essential features of the conditions on $(a,b,c)$ in Assumption \ref{assump:Coeff} are that (i) the matrix $a=(a^{ij})$ is \emph{uniformly elliptic}, so the \emph{degeneracy} in \eqref{eq:Generator} is captured by the common factor $x_d$ appearing in the $u_{x_ix_j}$ terms, (ii) the coefficients $(x_d a,b,c)$ have at most \emph{linear growth} with respect to $x\in\HH$ as $x\to\infty$, (iii) the coefficients $(a,b,c)$ are \emph{locally H\"
older continuous} on $\overline\HH_T$ with exponent $\alpha\in(0,1)$, (iv) the coefficient $c$ is \emph{bounded above} on $\HH_T$ by a constant, and (v) the coefficient $b^d$ is \emph{positive} when $x_d=0$. We can now state our first main result.

\begin{thm} [Existence and uniqueness of solutions to a degenerate-parabolic partial differential equation with unbounded coefficients]
\label{thm:MainExistenceUniquenessPDE}
Assume that the coefficients $(a,b,c)$ in \eqref{eq:Generator} obey the conditions in Assumption \ref{assump:Coeff}. Then there is a positive constant $p$, depending only on the H\"older exponent $\alpha\in(0,1)$, such that for any $T>0$, $f \in \sC^{\alpha}_p(\overline{\HH}_T)$ and $g \in \sC^{2+\alpha}_p(\overline{\HH})$, there exists a unique solution
$u \in \sC^{2+\alpha}(\overline{\HH}_T)$ to \eqref{eq:Problem}. Moreover, $u$ satisfies the a priori estimate
\begin{equation}
\label{eq:GlobalEstimate}
\|u\|_{\sC^{2+\alpha}(\overline{\HH}_T)}
\leq C \left(  \|f\|_{\sC^{\alpha}_p(\overline{\HH}_T)} +  \|g\|_{\sC^{2+\alpha}_p(\overline{\HH})} \right),
\end{equation}
where $C$ is a positive constant, depending only on $K$, $\nu$, $\delta$, $d$, $\alpha$ and $T$.
\end{thm}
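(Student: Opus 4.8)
The plan is to obtain both uniqueness and existence from a single global \emph{a priori} estimate --- namely \eqref{eq:GlobalEstimate}, with $u$ measured in the weighted H\"older space $\sC^{2+\alpha}_p(\overline{\HH}_T)$ --- together with the method of continuity. Uniqueness is immediate once \eqref{eq:GlobalEstimate} is known for \emph{every} solution in the weighted space: apply it with $f=0$ and $g=0$. For existence, the idea is to connect $L$ to a model operator $L_0$ whose initial-value problem is solvable and to propagate solvability along the linear family $L_s := (1-s)L_0 + sL$, $s\in[0,1]$, exploiting that the \emph{a priori} estimate is uniform in $s$.

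First I would establish the \emph{a priori} estimate, splitting it into a zeroth-order bound and a Schauder bound. For the zeroth-order bound $\|u\|_{\sC^0_p(\overline{\HH}_T)} \le C(\|f\|_{\sC^0_p(\overline{\HH}_T)} + \|g\|_{\sC^0_p(\overline{\HH})})$ I would use a weighted maximum principle: since $c$ is bounded above, $b^d>0$ on $\{x_d=0\}$ (so that the degenerate boundary carries no boundary condition, in the sense of Fichera), and the coefficients $(x_d a,b,c)$ grow at most linearly, one builds a supersolution of the form $e^{\lambda t}(1+|x|^2)^q$ with $\lambda,q$ large and applies a comparison argument. For the Schauder bound I would cover $\overline{\HH}_T$ by three regimes: (i) near the degenerate boundary $\{x_d=0\}$, where, after freezing $(a^{ij},b^d)$ at a boundary point and using $b^d>0$, $L$ is a small perturbation of the model degenerate operator and the cycloidal-metric Schauder estimates of \cite{DaskalHamilton1998, Koch} apply; (ii) compact subsets of $\HH_T$, where $L$ is uniformly parabolic and the classical interior parabolic Schauder estimates apply; and (iii) a neighborhood of $x=\infty$ with $x_d$ bounded away from $0$, where on unit balls $B_1(x_0)$ the operator $L$ is uniformly parabolic but with coefficient norms growing polynomially in $|x_0|$, so that the classical interior estimate on $B_1(x_0)$, once divided through by the weight $(1+|x_0|)^p$ with $p$ large enough, becomes uniform in $x_0$. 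Patching (i)--(iii) with a partition of unity and absorbing the lower-order terms by an interpolation inequality in the weighted spaces yields \eqref{eq:GlobalEstimate}; the requirement that $p$ exceed the power with which coefficient norms enter the Schauder and interpolation constants is precisely what makes $p$ depend only on $\alpha$.

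For the model operator I would take $L_0$ given by $-L_0 u = -u_t + x_d\Delta u + \beta_0 u_{x_d}$ with $\beta_0>0$ (which satisfies Assumption \ref{assump:Coeff}), or the Heston operator. Solvability of $L_0 u=f$, $u(0,\cdot)=g$ in $\sC^{2+\alpha}_p$ follows by separation of variables: the one-dimensional operator $x_d\partial_{x_d}^2 + \beta_0\partial_{x_d}$ has an explicit transition density, whose convolution with the Gaussian kernel in the $x'$-variables produces the solution, and the corresponding kernel bounds give membership in the weighted space (alternatively one invokes the existence result for the model operator from the companion or earlier work). Then, with $L_s := (1-s)L_0 + sL$ again obeying Assumption \ref{assump:Coeff} with $s$-independent constants, the previous step gives \eqref{eq:GlobalEstimate} for each $L_s$ with $C$ independent of $s$. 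Regarding $u\mapsto (L_s u, u(0,\cdot))$ as a bounded linear map $\sC^{2+\alpha}_p(\overline{\HH}_T)\to\sC^{\alpha}_p(\overline{\HH}_T)\times\sC^{2+\alpha}_p(\overline{\HH})$, the standard continuity argument --- the set of $s$ for which this map is onto contains $0$ and is open with a step size uniform in $s$, because $C$ is --- shows it is onto at $s=1$; this is existence for $L$, and the norm of the inverse is exactly \eqref{eq:GlobalEstimate}.

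The hard part will be the Schauder component of the \emph{a priori} estimate: one must reconcile within a single global estimate the degeneracy of $L$ along $\{x_d=0\}$, which forces the non-standard cycloidal H\"older norms near the boundary, with the unboundedness of the coefficients as $|x|\to\infty$, which forces the polynomial weights --- in particular verifying that the polynomial weight simultaneously controls the linear growth of $(x_d a,b,c)$ and does not degrade the boundary estimates, and pinning down the correct exponent $p$. Establishing solvability of the model operator in the weighted scale is a secondary technical hurdle.
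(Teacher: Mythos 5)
Your plan for the a priori estimate is broadly consistent with the paper's (a weighted maximum principle for the zeroth-order bound, then a covering argument splitting into the near-boundary regime handled by Daskalopoulos--Hamilton--Koch Schauder theory and the far regime handled by classical parabolic Schauder theory with the coefficient growth absorbed into the polynomial weight), although you should note that \eqref{eq:GlobalEstimate} bounds the \emph{unweighted} norm $\|u\|_{\sC^{2+\alpha}(\overline{\HH}_T)}$, not $\|u\|_{\sC^{2+\alpha}_p(\overline{\HH}_T)}$; there is no reason the solution should decay polynomially at spatial infinity, and the paper's Lemma \ref{lem:GlobalEst} accordingly has the unweighted norm on the left and the weighted norms on the right.

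The genuine gap is the method-of-continuity step, and the paper warns against it explicitly. Your argument requires $u\mapsto (L_s u, u(0,\cdot))$ to be a \emph{bounded linear operator} from $\sC^{2+\alpha}_p(\overline{\HH}_T)$ (or $\sC^{2+\alpha}(\overline{\HH}_T)$) into $\sC^{\alpha}_p(\overline{\HH}_T)\times\sC^{2+\alpha}_p(\overline{\HH})$ and to depend continuously on $s$ in operator norm. But because the coefficients $(x_d a, b, c)$ may grow linearly in $|x|$, the operator $L$ is \emph{not} a well-defined bounded map between these spaces: for $u\in\sC^{2+\alpha}_p(\overline{\HH}_T)$, the quantities $(1+|x|)^p\,x_d u_{x_ix_j}$ and $(1+|x|)^p\,u_{x_i}$ are controlled, but multiplying by the unbounded factors $a^{ij}$ (for $x_d>2$) or $b^i$ produces terms of order $(1+|x|)^{p+1}$, so $Lu$ lands at best in $\sC^{\alpha}_{p-1}$, not $\sC^{\alpha}_p$; and the a priori estimate offers no mechanism to gain a power of decay that would let you re-pair the spaces. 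Consequently the ``openness'' step of the continuity argument cannot even be set up. The paper avoids this by first proving solvability under the additional Hypothesis \ref{hyp:CoeffBounded} of globally bounded H\"older coefficients --- there $L:\hat\sC^{2+\alpha}\to\sC^{\alpha}$ \emph{is} well-defined, and an approximate inverse $M$ is built from a partition of unity, frozen-coefficient model operators, and Proposition \ref{prop:ConstantCoeff}, yielding $\|LM-I\|<1$ (Proposition \ref{prop:ExistenceUniquenessBoundedCoeff}) --- and then handles general $L$ by truncating the coefficients outside $B_n$ to obtain operators $L_n$ satisfying Hypothesis \ref{hyp:CoeffBounded}, solving for $u_n$, and passing to the limit via the uniform a priori bound of Lemma \ref{lem:GlobalEst} and compact embedding/Arzel\`a--Ascoli on bounded subdomains. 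If you want to keep the skeleton of your argument, you would need to replace the continuity step with some analogue of this truncate-and-pass-to-the-limit scheme, and you would still need an existence result for bounded-coefficient degenerate operators to seed it.
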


One of the difficulties in establishing Theorem \ref{thm:MainExistenceUniquenessPDE} is that the coefficient, $x_da(t,x)$, becomes degenerate when $x_d=0$ and is allowed to have linear growth in $x$, instead of being uniformly elliptic and bounded as  in \cite[Hypothesis 2.1]{Krylov_Priola_2010}. To address the degeneracy of $x_da(t,x)$ as $x_d\downarrow 0$, we build on the results on \cite[Theorem I.1.1]{DaskalHamilton1998} by employing a localization procedure. To address the linear growth of the coefficients $(x_da,b,c)$ of the parabolic operator $L$ in \eqref{eq:Generator}, we augment previous definitions of weighted H\"older spaces \cite{DaskalHamilton1998, Koch}, by introducing a weight $(1+|x|)^p$, where $p$ is a positive constant depending only on the H\"older exponent $\alpha\in(0,1)$. The proof of existence does not follow by standard methods, for example, the method of continuity, because $L:\sC^{2+\alpha}(\overline{\HH}_T)\rightarrow \sC^{\alpha}_p(\overline{\HH}_T)$ is not a well-defined
operator. In general, the domain of definition of $L$ is a subspace of $\sC^{2+\alpha}(\overline{\HH}_T)$ which depends on the nature of the coefficients of $L$, a feature which is not encountered in the case of parabolic operators with bounded coefficients . To circumvent this difficulty, we first consider the case of similar degenerate operators with \emph{bounded} coefficients and then use an approximation procedure to obtain our solution. To obtain convergence of sequences to a solution of our parabolic differential equation \eqref{eq:Problem}, we prove a priori estimates in the weighted H\"older spaces $\sC^{\alpha}_p$ and $\sC^{2+\alpha}_p$.

The conditions in Assumption \ref{assump:Coeff} on the coefficients $(a,b,c)$ in \eqref{eq:Generator} are mild enough that they allow for many examples of interest in mathematical finance.

\begin{exmp}[Parabolic Heston partial differential equation]
\label{exmp:HestonPDE}
The conditions in Assumption \ref{assump:Coeff} are obeyed by the coefficients of the parabolic Heston partial differential operator,
\begin{equation}
\label{eq:HestonPDE}
-Lu = -u_t + \frac{y}{2}\left(u_{xx} + 2\varrho\sigma u_{xy} + \sigma^2 u_{yy}\right) + \left(r-q-\frac{y}{2}\right)u_x + \kappa(\theta-y)u_y - ru,
\end{equation}
where $q\geq 0, r\geq 0, \kappa>0, \theta>0, \sigma>0$, and $\varrho\in (-1,1)$ are constants.
\end{exmp}

Naturally, the conditions in Assumption \ref{assump:Coeff} on the coefficients $(a,b,c)$ in \eqref{eq:Generator} also allow for the model linear degenerate-parabolic equation used in the study of the porous medium equation.

\begin{exmp}[Model linear degenerate-parabolic equation]
\label{exmp:GeneralizedPorousMediumEquation}
In their landmark article, Daskalopoulos and Hamilton \cite{DaskalHamilton1998} proved existence and uniqueness of $C^\infty$ solutions, $u$, to the Cauchy problem for the porous medium equation \cite[p. 899]{DaskalHamilton1998} (when $d=2$),
\begin{equation}
\label{eq:PorousMediumEquation}
-u_t + \sum_{i=1}^d (u^m)_{x_ix_i} = 0 \quad\hbox{on }(0,T)\times\RR^d, \quad u(\cdot, 0) = g \quad\hbox{on }\RR^d,
\end{equation}
where $m>1$ and $g$ is non-negative, integrable, and compactly supported on $\RR^d$, together with $C^\infty$-regularity of its free boundary, $\partial\{u>0\}$. Their analysis is based on an extensive development of existence, uniqueness, and regularity results for the linearization of the porous medium equation near the free boundary and, in particular, their \emph{model linear degenerate operator} \cite[p. 901]{DaskalHamilton1998} (generalized from $d=2$ in their article),
\begin{equation}
\label{eq:DHModel}
-Lu = -u_t + x_d\sum_{i=1}^d u_{x_ix_i} + \nu u_{x_d},
\end{equation}
where $\nu$ is a positive constant. The same model linear degenerate operator (for $d\geq 2$), was studied independently by Koch \cite[Equation (4.43)]{Koch} and, in a remarkable Habilitation thesis, he obtained existence, uniqueness, and regularity results for solutions to \eqref{eq:PorousMediumEquation} which complement those of Daskalopoulos and Hamilton \cite{DaskalHamilton1998}.
%COMMENT: In DH, see p. 941 bottom of page
Even when the coefficients in \eqref{eq:Generator} are constant, our operator \emph{cannot} be transformed by simple coordinate changes to one of the form \eqref{eq:DHModel}, but rather one of the form \eqref{eq:SimplerForm}. Similarly, the operator \eqref{eq:HestonPDE} \emph{cannot} be transformed by simple coordinate changes to one of the form \eqref{eq:DHModel}, even when the factor $y$ in the coefficients of $u_x$ and $u_y$ in \eqref{eq:HestonPDE} is (artificially) replaced by zero.
%COMMENT: The DH positivity condition is actually that \sum_{i=1}^d b^in_i > 0 on their free boundary \partial\Omega, where n is the interior normal (p. 942). For us, it will translate into b^d>0.
\end{exmp}

\subsection{Connections with previous research on degenerate partial differential equations}
We provide a brief survey of some related research by other authors on Schauder a priori estimates and regularity theory for solutions to degenerate-elliptic and degenerate-parabolic partial differential equations most closely related to the results described in our article.

The principal feature which distinguishes the Cauchy problem \eqref{eq:Problem}, when the operator $L$ is given by \eqref{eq:Generator}, from the linear, second-order, strictly parabolic operators in \cite{Krylov_LecturesHolder, LadyzenskajaSolonnikovUralceva, Lieberman} and their initial-boundary value problems, is the degeneracy of $L$ due to the factor, $x_d$, in the coefficients of $u_{x_ix_j}$ and, because the coefficient $b^d$ of $u_{x_d}$ in \eqref{eq:Generator} is positive, the fact that boundary conditions may be omitted along $x_d=0$ when we seek solutions, $u$, with sufficient regularity up to $x_d=0$.

The literature on degenerate-elliptic and parabolic partial differential equations is vast, with the well-known articles of Fabes, Kenig, and Serapioni \cite{Fabes_1982, Fabes_Kenig_Serapioni_1982a}, Fichera \cite{Fichera_1956, Fichera_1960}, Kohn and Nirenberg \cite{Kohn_Nirenberg_1967}, Murthy and Stampacchia \cite{Murthy_Stampacchia_1968, Murthy_Stampacchia_1968corr} and the monographs of Levendorski{\u\i} \cite{LevendorskiDegenElliptic} and Ole{\u\i}nik and Radkevi{\v{c}} \cite{Oleinik_Radkevic, Radkevich_2009a, Radkevich_2009b}, being merely the tip of the iceberg.

As far as the authors can tell, however, there has been relatively little prior work on a priori Schauder estimates and higher-order H\"older regularity of solutions up to the portion of the domain boundary where the operator becomes degenerate. In this context, the work of Daskalopoulos, Hamilton, and Rhee \cite{DaskalHamilton1998, Daskalopoulos_Rhee_2003, RheeThesis} and of Koch stands out in recent years because of their introduction of the cycloidal metric on the upper half-space, weighted H\"older norms, and weighted Sobolev norms which provide the key ingredients required to unlock the existence, uniqueness, and higher-order regularity theory for solutions to the porous medium equation \eqref{eq:PorousMediumEquation} and the linear degenerate-parabolic model equation \eqref{eq:DHModel} on the upper half-space.

While the Daskalopoulos-Hamilton Schauder theory for degenerate-parabolic operators has been adopted so far by relatively few other researchers, it has also been employed by De Simone, Giacomelli, Kn\"upfer, and Otto in \cite{DeSimone_Knupfer_Otto_2006, Giacomelli_Knupfer_Otto_2008, Giacomelli_Knupfer_2010} and by Epstein and Mazzeo in \cite{Epstein_Mazzeo_2010}.

\subsection{Extensions and future work}
Motivated by the results obtained in our related ``degenerate-elliptic'' article \cite{Feehan_Pop_elliptichestonschauder}, it is natural to consider higher-order, interior and boundary regularity and a priori Schauder estimates for solutions $u \in C^{k,2+\alpha}_s(\underline Q)\cap C(\bar Q)$ to an initial-boundary value problem,
\begin{equation}
\label{eq:InitialBoundaryProblem}
\begin{cases}
Lu = f &\hbox{on } Q,
\\
u = g &\hbox{on } \mydirac_1\!Q,
\end{cases}
\end{equation}
generalizing the Cauchy problem \eqref{eq:Problem} for the operator $L$ in \eqref{eq:Generator}. Here, the cylinder $(0,T)\times\HH$ has been replaced by a subdomain $Q\subset(0,T)\times\HH$ with non-empty ``degenerate boundary'' portion
$\mydirac_0\!Q := \Int(((0,T)\times\partial\HH)\cap \partial Q)$ of the parabolic boundary,
$$
\mydirac Q := \left((\{0\}\times\HH)\cap \bar Q\right) \cup \left(((0,T)\times\HH)\cap \partial Q\right),
$$
and $\mydirac_1\! Q := \mydirac Q \less \overline{\mydirac_0\!Q}$ denotes the ``non-degenerate boundary'' portion of the parabolic boundary, while $\underline Q := Q\cup \mydirac_0\!Q$.

For reasons we summarize in \cite[\S 1.3]{Feehan_Pop_higherregularityweaksoln}, the development of \emph{global} Schauder a priori estimates, regularity, and existence theory for solutions $u \in C^{k,2+\alpha}_s(\bar Q)$ to \eqref{eq:InitialBoundaryProblem} appears very difficult when the intersection, $\overline{\mydirac_0\!Q}\cap\overline{\mydirac_1\! Q}$, of the ``degenerate and non-degenerate boundary'' portions is non-empty. In fact, even the development of an existence theory for solutions $u$ to \eqref{eq:InitialBoundaryProblem} just belonging to $C^{2+\alpha}_s(\underline Q)\cap C(\bar Q)$ is already a challenging problem which is not addressed in \cite{DaskalHamilton1998, Daskalopoulos_Rhee_2003}.

While our a priori Schauder estimates rely on the specific form of the degeneracy factor, $x_d$, of the operator $L$ in \eqref{eq:Generator} on a subdomain of the half-space, we obtained weak and strong maximum principles for a much broader class of degenerate-elliptic operators in \cite{Feehan_maximumprinciple}. Thus, for degenerate-parabolic operators such as
$$
-Lv = -v_t + \vartheta\sum_{i,j=1}^d a^{ij}v_{x_ix_j} + \sum_{i=1}^d b^iv_{x_i} + cv \quad\hbox{on } Q, \quad v\in C^\infty(Q),
$$
where $\vartheta \in C^\alpha_{\loc}(\bar Q)$ and $\vartheta>0$ on a subdomain $Q\subset(0,T)\times\RR^d$ with non-empty ``degenerate boundary'' portion $\mydirac_0\!Q := \Int(\{(t,x)\in\partial Q:\vartheta(x)=0\})$ of the parabolic boundary,
$$
\mydirac Q := \left((\{0\}\times\RR^d)\cap \bar Q\right) \cup \left(((0,T)\times\RR^d)\cap \partial Q\right),
$$
we plan to develop a priori Schauder estimates, regularity, and existence theory in a subsequent article.

\subsection{Outline of the article}
\label{subsec:Guide}
In Section \ref{sec:WeightedHolderSpaces}, we define the H\"older spaces required to prove Theorem \ref{thm:MainExistenceUniquenessPDE} (existence and uniqueness of solutions to a degenerate-parabolic partial differential equation on a half-space with unbounded coefficients) and provide a detailed description of the conditions required of the coefficients $(a,b,c)$ in the statement of Theorem \ref{thm:MainExistenceUniquenessPDE}, which we then proceed to prove in Section \ref{sec:ExistenceUniquenessRegularityInhomoInitValProblem}. Appendices \ref{app:ExistenceUniquenessDegenParabolicPDEConstantCoefficients} and \ref{app:ProofKrylovResults} contain proofs for several results which are slightly more technical than those in the body of the article.

\subsection{Notation and conventions}
We adopt the convention that a condition labeled as an \emph{Assumption} is considered to be universal and in effect throughout this article and so not referenced explicitly in theorem and similar statements; a condition labeled as a \emph{Hypothesis} is only considered to be in effect when explicitly referenced.

\subsection{Acknowledgments} We would like to thank Panagiota Daskalopoulos and Peter Laurence for stimulating discussions on degenerate partial differential equations. We are very grateful to everyone who provided us with comments on a previous version of this article or related conference and seminar presentations. Finally, we thank the anonymous referee for a careful reading of our manuscript.

\section{Weighted H\"older spaces and coefficients of the differential operators}
\label{sec:WeightedHolderSpaces}
In Section \ref{subsec:DHKHolderSpaces}, we introduce the H\"older spaces required for the statement and proof of Theorem \ref{thm:MainExistenceUniquenessPDE}, while in Section \ref{subsec:AssumptionsPDECoefficients}, we describe the regularity and growth conditions required of the coefficients $(a, b, c)$ in Theorem \ref{thm:MainExistenceUniquenessPDE}.

\subsection{Weighted H\"older spaces}
\label{subsec:DHKHolderSpaces}
For $a>0$, we denote
$$
\HH_{a,T} := (0,T) \times\RR^{d-1}\times(0,a),
$$
and, when $T=\infty$, we denote $\HH_\infty = (0,\infty) \times\HH$ and $\HH_{a,\infty} = (0,\infty)\times\RR^{d-1}\times(0,a)$. We denote the usual closures these half-spaces and cylinders by $\overline{\HH}:=\RR^{d-1}\times[0,\infty)$, $\overline{\HH}_T:=[0,T]\times\overline\HH$, while $\overline{\HH}_{a,T}:=[0,T]\times\RR^{d-1}\times[0,a]$. We write points in $\HH$ as $x:=(x',x_d)$, where $x':=(x_1, x_2, \ldots, x_{d-1})\in \RR^{d-1}$. For $x^0 \in \overline{\HH}$ and $R>0$, we let
\begin{align*}
B_R(x^0) &:= \left\{x \in \HH: |x-x^0| < R \right\},
\\
Q_{R,T}(x^0) &:= (0,T)\times B_R(x^0),
\end{align*}
and denote their usual closures by $\bar B_R(x^0) := \{x \in \HH: |x-x^0| \leq R\}$ and $\bar Q_{R,T}(x^0) := [0,T]\times \bar B_R(x^0)$, respectively. We write $B_R$ or $Q_{R,T}$ when the center, $x^0$, is clear from the context or unimportant.

A parabolic partial differential equation with a degeneracy similar to that considered in this article arises in the study of the porous medium equation \cite{DaskalHamilton1998, Daskalopoulos_Rhee_2003, Koch}. The existence, uniqueness, and regularity theory for such equations is facilitated by the use of H\"older spaces defined by the \emph{cycloidal metric} on $\HH$ introduced by Daskalopoulos and Hamilton \cite{DaskalHamilton1998} and, independently, by Koch \cite{Koch}. See \cite[p. 901]{DaskalHamilton1998} for a discussion of this metric. Following \cite[p. $901$]{DaskalHamilton1998}, we define the \emph{cycloidal distance} between two points, $P_1=(t_1, x^1), P_2=(t_2, x^2) \in [0,\infty)\times\overline{\HH}$, by
\begin{equation}
\label{eq:CycloidalMetric}
\begin{aligned}
s(P_1, P_2) &:= \frac{\sum_{i=1}^d |x_i^1-x_i^2|}{\sqrt{x_d^1}+\sqrt{x_d^2}+\sqrt{\sum_{i=1}^{d-1} |x_i^1-x_i^2|}}+\sqrt{|t_1-t_2|}.
\end{aligned}
\end{equation}
Following \cite[p. 117]{Krylov_LecturesHolder}, we define the usual Euclidean distance between points $P_1, P_2 \in [0,\infty)\times\RR^d$ by
\begin{equation}
\label{eq:UsualMetric}
\begin{aligned}
\rho(P_1, P_2) &:= \sum_{i=1}^d |x_i^1-x_i^2| + \sqrt{|t_1-t_2|}.
\end{aligned}
\end{equation}

\begin{rmk}[Equivalence of the cycloidal and Euclidean distance functions on suitable subsets of $[0,\infty)\times\HH$]
\label{rmk:EquivalenceMetric}
The cycloidal and Euclidean distance functions, $s$ and $\rho$, are equivalent on sets of the form $[0,\infty)\times\RR^{d-1}\times[y_0,y_1]$, for any $0<y_0<y_1$.
\end{rmk}

Let $\Omega\subset (0,T)\times\HH$ be an open set and $\alpha \in (0,1)$. We denote by $C(\bar \Omega)$ the space of bounded, continuous functions on $\bar \Omega$, and by $C^{\infty}_0(\bar \Omega)$ the space of smooth functions with compact support in $\bar \Omega$. For a function $u:\bar \Omega\rightarrow\RR$, we consider the following norms and seminorms
\begin{align}
\label{eq:SupNorm}
 \|u\|_{C(\bar \Omega)}             &= \sup_{P \in \bar \Omega}  |u(P)|,\\
\label{eq:SeminormS}
 [u]_{C^{\alpha}_s(\bar \Omega)}      &= \sup_{\stackrel{P_1, P_2 \in \bar \Omega,} {P_1 \neq P_2}} \frac{|u(P_1)-u(P_2)|}{s^{\alpha}(P_1,P_2)},\\
\label{eq:SeminormRHO}
 [u]_{C^{\alpha}_{\rho}(\bar \Omega)} &= \sup_{\stackrel{P_1, P_2 \in \bar \Omega,} {P_1 \neq P_2}} \frac{|u(P_1)-u(P_2)|}{\rho^\alpha(P_1,P_2)}.
\end{align}
We say that $u \in C^{\alpha}_s(\bar \Omega)$ if $u \in  C(\bar \Omega)$ and
\[
\|u\|_{C^{\alpha}_s(\bar \Omega)} = \|u\|_{C(\bar \Omega)}+[u]_{C^{\alpha}_s(\bar \Omega)}  < \infty.
\]
Analogously, we define the H\"older space $C^{\alpha}_{\rho}(\bar \Omega)$ of functions $u$ which satisfy
\[
\|u\|_{C^{\alpha}_{\rho}(\bar \Omega)} = \|u\|_{C(\bar \Omega)}+[u]_{C^{\alpha}_{\rho}(\bar \Omega)}  < \infty.
\]
We say that $u \in C^{2+\alpha}_s(\bar \Omega)$ if
\[
\|u\|_{C^{2+\alpha}_s(\bar \Omega)}
:= \|u\|_{C^{\alpha}_s(\bar \Omega)}+\|u_t\|_{C^{\alpha}_s(\bar \Omega)}
+\max_{1\leq i\leq d}\|u_{x_i}\|_{C^{\alpha}_s(\bar \Omega)}
+\max_{1\leq i,j\leq d}\|x_du_{x_ix_j}\|_{C^{\alpha}_s(\bar \Omega)} <\infty,
\]
and $u \in C^{2+\alpha}_\rho(\bar \Omega)$ if
\[
\|u\|_{C^{2+\alpha}_{\rho}(\bar \Omega)}
= \|u\|_{C^{\alpha}_{\rho}(\bar \Omega)}+\|u_t\|_{C^{\alpha}_{\rho}(\bar \Omega)}
+ \max_{1\leq i\leq d} \|u_{x_i}\|_{C^{\alpha}_{\rho}(\bar \Omega)}
+ \max_{1\leq i,j\leq d} \|u_{x_ix_j}\|_{C^{\alpha}_{\rho}(\bar \Omega)}  <\infty.
\]
We denote by $C^{\alpha}_{s, \loc}(\bar \Omega)$ the space of functions $u$ with the property that for any compact set $K\subseteq \bar \Omega$, we have $u \in C^{\alpha}_s(K)$. Analogously, we define the spaces $C^{2+\alpha}_{s, \loc}(\bar \Omega)$, $C^{\alpha}_{\rho, \loc}(\bar \Omega)$ and $C^{2+\alpha}_{\rho, \loc}(\bar \Omega)$.

We prove existence, uniqueness and regularity of solutions for a parabolic operator \eqref{eq:Generator} whose second-order coefficients are degenerate on $\partial\HH$. For this purpose, we will make use of the
following H\"older spaces,
%COMMENT: These could be replaced by one H\"older space, with distance interpolating between s and \rho
%COMMENT: Sets like \overline{\HH}_T\less\HH_{1,T} are a bit cumbersome -- it is hard to guess their meaning

\begin{equation*}
\begin{aligned}
\sC^{\alpha}(\overline{\HH}_T)
&:= \left\{u: u \in C^{\alpha}_s(\overline{\HH}_{1,T}) \cap C^{\alpha}_{\rho}(\overline{\HH}_T\less\HH_{1,T}) \right\},\\
\sC^{2+\alpha}(\overline{\HH}_T)
&:= \left\{u: u \in C^{2+\alpha}_s(\overline{\HH}_{1,T}) \cap C^{2+\alpha}_{\rho}(\overline{\HH}_T\less\HH_{1,T}) \right\}.
\end{aligned}
\end{equation*}
We define $\sC^{\alpha}(\overline{\HH})$ and $\sC^{2+\alpha}(\overline{\HH})$ in the analogous manner.

The coefficient functions $x_d a^{ij}(t,x)$, $b^i(t,x)$ and $c(t,x)$ of the parabolic operator \eqref{eq:Generator} are allowed to have linear growth in $|x|$. To account for the unboundedness of the coefficients, we  augment our definition of H\"older spaces by introducing weights of the form $(1+|x|)^q$, where $q\geq 0$ will be suitably chosen in the sequel. For $q\geq0$, we define
\begin{equation}
\label{eq:DefinitionX_0_q}
\begin{aligned}
\|u\|_{\sC^0_q(\overline{\HH})}
&:= \sup_{x \in \overline{\HH}}\left(1+ |x|\right)^q |u(x)|,
\end{aligned}
\end{equation}
and, given $T>0$, we define
\begin{align}
\label{eq:DefinitionX_0_q_T}
\|u\|_{\sC^0_q(\overline{\HH}_T)}
&:= \sup_{(t,x)\in\overline{\HH}_T} \left(1+|x|\right)^q|u(t,x)|.
\end{align}
Moreover, given $\alpha\in(0,1)$, we define
\begin{align}
\label{eq:DefinitionX_Alpha_q_T}
\|u\|_{\sC^{\alpha}_q(\overline{\HH}_T)}
&:= \|u\|_{\sC^0_q(\overline{\HH}_T)} + [(1+|x|)^q u]_{C^{\alpha}_s(\overline{\HH}_{1,T})} + [(1+|x|)^q u]_{C^{\alpha}_{\rho}(\overline{\HH}_T\less\HH_{1,T})},\\
\label{eq:DefinitionX_Two_Aplha_q_T}
\|u\|_{\sC^{2+\alpha}_q(\overline{\HH}_T)}
&:=  \|u\|_{\sC^{\alpha}_q(\overline{\HH}_T)}+ \|u_t\|_{\sC^{\alpha}_q(\overline{\HH}_T)} + \|u_{x_i}\|_{\sC^{\alpha}_q(\overline{\HH}_T)}+ \|x_du_{x_ix_j}\|_{\sC^{\alpha}_q(\overline{\HH}_T)}.
\end{align}
The vector spaces
\begin{align*}
\sC^0_q(\overline{\HH}_T)
&:= \left\{u \in C(\overline{\HH}_T): \|u\|_{\sC^{0}_q(\overline{\HH}_T)} <\infty \right\},\\
\sC^{\alpha}_q(\overline{\HH}_T)
&:= \left\{u \in \sC^{\alpha}(\overline{\HH}_T): \|u\|_{\sC^{\alpha}_q(\overline{\HH}_T)} <\infty \right\},\\
\sC^{2+\alpha}_q(\overline{\HH}_T)
&:= \left\{u \in \sC^{2+\alpha}(\overline{\HH}_T): \|u\|_{\sC^{2+\alpha}_q(\overline{\HH}_T)} <\infty \right\},
\end{align*}
can be shown to be Banach spaces with respect to the norms \eqref{eq:DefinitionX_0_q_T}, \eqref{eq:DefinitionX_Alpha_q_T} and \eqref{eq:DefinitionX_Two_Aplha_q_T}, respectively. We define the vector spaces $\sC^0_q(\overline{\HH})$, $\sC^{\alpha}_q(\overline{\HH})$, and $\sC^{2+\alpha}_q(\overline{\HH})$ similarly, and each can be shown to be a Banach space when equipped with the corresponding norm.

We let $\sC^{2+\alpha}_{q, \loc}(\overline{\HH}_T)$ denote the vector space of functions $u$ such that for any compact set $K \subset \overline{\HH}_T$, we have $u \in \sC^{2+\alpha}_q(K)$, for $q \geq 0$.

When $q=0$, the subscript $q$ is omitted in the preceding definitions.

\subsection{Coefficients of the differential operators}
\label{subsec:AssumptionsPDECoefficients}
Unless other conditions are explicitly substituted, we require in this article that the coefficients $(a,b,c)$ of the parabolic differential operator $L$ in \eqref{eq:Generator} satisfy the conditions in the following

\begin{assump}[Properties of the coefficients of the parabolic differential operator]
\label{assump:Coeff}
There are constants $\delta>0$, $K>0$, $\nu>0$ and $\alpha \in (0,1)$ such that the following hold.
\begin{enumerate}
\item The coefficients $c$ and $b^d$ obey
\begin{align}
\label{eq:ZerothOrderTermUpperBound}
c(t,x) &\leq K, \quad \forall\, (t,x)\in \overline{\HH}_{\infty},
\\
\label{eq:CoeffBD}
b^d(t,x',0) &\geq \nu, \quad \forall\, (t,x')\in [0,\infty)\times\RR^{d-1}.
\end{align}
\item On $\overline{\HH}_{2,\infty}$ (that is, near $x_d=0$), we require that
\begin{gather}
\label{eq:NonDegeneracyNearBoundary}
\sum_{i,j=1}^d a^{ij} (t,x) \eta_i \eta_j \geq \delta |\eta|^2, \quad \forall\, \eta \in \mathbb{R}^d, \quad\forall\, (t,x)\in \overline{\HH}_{2,\infty},
\\
\label{eq:BoundednessNearBoundary}
\max_{1\leq i,j\leq d}\|a^{ij}\|_{C(\overline{\HH}_{2,\infty})} + \max_{1\leq i\leq d}\|b^i\|_{C(\overline{\HH}_{2,\infty})} + \|c\|_{C(\overline{\HH}_{2,\infty})} \leq K,
\end{gather}
and, for all $P_1, P_2 \in \overline{\HH}_{2,\infty}$ such that $P_1 \neq P_2$ and $s(P_1, P_2) \leq 1$,
\begin{equation}
\label{eq:LocalHolderS}
\begin{aligned}
\max_{1\leq i,j\leq d}\frac{|a^{ij}(P_1)-a^{ij}(P_2)|}{s^{\alpha}(P_1,P_2)} &\leq K,
\\
\max_{1\leq i\leq d}\frac{|b^i(P_1)-b^i(P_2)|}{s^{\alpha}(P_1,P_2)} &\leq K,
\\
\frac{|c(P_1)-c(P_2)|}{s^{\alpha}(P_1,P_2)} &\leq K.
\end{aligned}
\end{equation}
\item On $\overline{\HH}_{\infty} \less \HH_{2,\infty}$ (that is, farther away from $x_d=0$), we require that
%COMMENT: We should write set differences like \overline{\HH}_{\infty} \less \HH_{2,\infty} more simply. They are confusing as is too hard to quickly tell which points are included and which are not.
\begin{align}
\label{eq:NonDegeneracyInterior}
\sum_{i,j=1}^d x_da^{ij} (t,x) \eta_i \eta_j \geq \delta |\eta|^2, \quad \forall\, \eta \in \mathbb{R}^d, \quad\forall\, (t,x)\in \overline{\HH}_{\infty} \less \HH_{2,\infty},
\end{align}
and, for all $P_1, P_2 \in \overline{\HH}_{\infty} \less \HH_{2,\infty}$ such that $P_1 \neq P_2$ and $\rho(P_1, P_2) \leq 1$,
\begin{equation}
\label{eq:LocalHolderRho}
\begin{aligned}
\max_{1\leq i,j\leq d}\frac{|x_d^1a^{ij}(P_1)-x_d^2a^{ij}(P_2)|}{\rho^{\alpha}(P_1,P_2)} &\leq K,
\\
\max_{1\leq i\leq d}\frac{|b^i(P_1)-b^i(P_2)|}{\rho^{\alpha}(P_1,P_2)} &\leq K,
\\
\frac{|c(P_1)-c(P_2)|}{\rho^{\alpha}(P_1,P_2)} &\leq K.
\end{aligned}
\end{equation}
\end{enumerate}
\end{assump}

\begin{rmk}[Local H\"older conditions on the coefficients]
The local H\"older conditions \eqref{eq:LocalHolderS} and \eqref{eq:LocalHolderRho} are similar to those in \cite[Hypothesis 2.1]{Krylov_Priola_2010}.
\end{rmk}

\begin{rmk}[Linear growth of the coefficients of the parabolic differential operator]
\label{rmk:LinearGrowth}
Conditions \eqref{eq:BoundednessNearBoundary} and \eqref{eq:LocalHolderRho} imply that the coefficients $x_d a^{ij}(t,x)$, $b^i(t,x)$ and $c(t,x)$ can have at most linear growth in $x$. In particular, we may choose the constant $K$ large enough such that
\begin{equation}
\label{eq:LinearGrowth}
\sum_{i,j=1}^d|x_da^{ij}(t,x)| + \sum_{i=1}^d|b^i(t,x)| + |c(t,x)| \leq K(1+|x|), \quad \forall\, (t,x) \in \overline{\HH}_{\infty}.
\end{equation}
\end{rmk}

\section{Existence, uniqueness and regularity of the inhomogeneous initial value problem}
\label{sec:ExistenceUniquenessRegularityInhomoInitValProblem}
In this section, we prove Theorem \ref{thm:MainExistenceUniquenessPDE}. We begin by reviewing the boundary properties and establishing the interpolation inequalities (Lemma \ref{lem:InterpolationIneqS}) suitable
for functions in $C^{2+\alpha}_{s}(\overline{\HH}_T)$. Then, we prove two versions of the maximum principle (Proposition \ref{prop:MaximumPrinciple}) that, when combined with the a priori local H\"older estimates at the boundary (Theorem \ref{thm:AprioriBoundaryEst}) and in the interior (Proposition \ref{prop:InteriorEst}), allow us to prove Theorem \ref{thm:MainExistenceUniquenessPDE}.

\subsection{Boundary properties of functions in Daskalopoulos-Hamilton-Koch H{\"o}lder spaces}
\label{subsec:BoundaryPropertiesDHKHolderFunctions}
The following result was proved as \cite[Proposition I.12.1]{DaskalHamilton1998} when $d=2$ and the proof when $d\geq 2$ follows by a similar argument; we include a proof for the cases omitted in \cite[Proposition I.12.1]{DaskalHamilton1998}.

\begin{lem}[Boundary properties of functions in Daskalopoulos-Hamilton-Koch H{\"o}lder spaces]
\label{lem:PropSecondOrderDeriv}
\cite[Proposition I.12.1]{DaskalHamilton1998}
Let $u \in C^{2+\alpha}_{s,\loc}(\overline{\HH}_T)$. Then, for all $\bar P\in [0,T]\times\partial \HH$,
\begin{equation}
\label{eq:PropSecondOrderDeriv}
\lim_{\overline{\HH}_T \ni P \rightarrow \bar P} x_d u_{x_ix_j}( P) = 0, \quad
 i,j=1,\ldots,d.
\end{equation}
\end{lem}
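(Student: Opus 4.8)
The plan is to prove \eqref{eq:PropSecondOrderDeriv} by contradiction, exploiting the defining property that $x_d u_{x_ix_j}$ lies in $C^\alpha_s$ locally, together with the fact that the cycloidal distance $s$ collapses transverse differences very strongly near the boundary $\{x_d=0\}$. Fix $\bar P = (\bar t, \bar x', 0) \in [0,T]\times\partial\HH$ and set $v_{ij} := x_d u_{x_ix_j}$, which by hypothesis belongs to $C^\alpha_s(K)$ for some compact neighborhood $K$ of $\bar P$ in $\overline{\HH}_T$; in particular $v_{ij}$ is continuous on $K$, so the limit in question exists and equals $v_{ij}(\bar P)$. It therefore suffices to show $v_{ij}(\bar P)=0$ for every boundary point $\bar P$. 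The key observation is that for two points $P_1=(t,x',x_d)$ and $P_2=(t,x',0)$ with the same $t$ and same $x'$, the cycloidal distance is $s(P_1,P_2) = x_d/(\sqrt{x_d}+0+0) = \sqrt{x_d}$, so the H\"older bound gives $|v_{ij}(P_1)-v_{ij}(P_2)| \leq [v_{ij}]_{C^\alpha_s(K)}\, x_d^{\alpha/2}$ for $x_d$ small. But $v_{ij}(P_2) = 0\cdot u_{x_ix_j}(P_2)$ is not literally available as written since $u_{x_ix_j}$ need not be defined at $x_d=0$; instead I would argue directly at interior points.

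Here is the cleaner route. Suppose for contradiction that for some boundary point $\bar P=(\bar t,\bar x',0)$ and some pair $(i,j)$ we have $v_{ij}(\bar P) = \lim_{P\to\bar P} x_d u_{x_ix_j}(P) = a \neq 0$. By continuity of $v_{ij}$ there is $r>0$ such that $|x_d u_{x_ix_j}(P) - a| < |a|/2$, hence $|u_{x_ix_j}(P)| \geq |a|/(2x_d)$, for all interior $P=(t,x',x_d)$ in a cylinder $Q_{r}$ around $\bar P$. Now consider the first-order derivative $u_{x_i}$, which lies in $C^\alpha_s(K)$ and in particular is bounded on $Q_r$, say by $M$. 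Integrating $u_{x_ix_j}$ along a segment in the $x_j$-direction (or in the $x_d$-direction when $j=d$) inside $Q_r$ at fixed small $x_d$ shows that $u_{x_i}$ would have to change by an amount comparable to $|a|/(2x_d)$ times the length of the segment — which can be taken of fixed size $\sim r$ when $j \neq d$ — contradicting the bound $|u_{x_i}|\le M$ once $x_d$ is small enough. When $j=d$, one integrates in the $x_d$-direction from height $x_d$ down toward $0$: $u_{x_i}(t,x',x_d) - u_{x_i}(t,x',\varepsilon) = \int_\varepsilon^{x_d} u_{x_ix_d}(t,x',\tau)\,d\tau$, and the integrand is $\geq |a|/(2\tau)$ in absolute value with a fixed sign near the boundary, so the integral diverges logarithmically as $\varepsilon\downarrow 0$ while the left side stays bounded — again a contradiction. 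The mixed second derivatives $u_{x_ix_j}$ with $i,j<d$ and the pure tangential ones are handled by the segment argument in a tangential direction, where no degeneracy helps the coefficient but the length of integration is unconstrained.

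The main obstacle I anticipate is making the "integrate the second derivative" step rigorous: one needs to know that along the relevant segments $u_{x_i}$ is genuinely the integral of $u_{x_ix_j}$, i.e. that the pointwise second derivatives control the first derivative via the fundamental theorem of calculus on interior segments. This is fine because $u \in C^{2+\alpha}_{s,\loc}$ forces $u \in C^{1,2}$ in the classical sense on the open set $\HH_T$ (the $C^\alpha_s$ control of $u_{x_ix_j}$ on compacts, combined with Remark \ref{rmk:EquivalenceMetric} giving equivalence of $s$ and $\rho$ away from the boundary, yields ordinary continuity of the second derivatives in the interior), so the integration is legitimate on any closed segment staying in $\HH_T$; one then lets the segment approach the boundary. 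A secondary technical point is ensuring the sign of $u_{x_ix_j}$ is eventually constant near $\bar P$ so the integral does not cancel — this follows from $|x_d u_{x_ix_j} - a|<|a|/2$, which pins the sign of $u_{x_ix_j}$ to that of $a$. With these points addressed, the contradiction closes the proof for all $i,j=1,\dots,d$.
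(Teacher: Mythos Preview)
Your proof is correct and follows essentially the same contradiction-by-integration strategy as the paper: assume $x_d u_{x_ix_j}\to a\neq 0$, deduce $|u_{x_ix_j}|\gtrsim |a|/x_d$ near the boundary, and integrate to contradict the finiteness of a first-derivative quantity. The only minor differences are that the paper contradicts the finiteness of the H\"older seminorm $[u_{x_j}]_{C^\alpha_s}$ rather than the sup norm, and defers the cases $i=d$ or $j=d$ to \cite{DaskalHamilton1998}, whereas your logarithmic-divergence argument for $j=d$ makes the proof self-contained.
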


\begin{proof}
%PF COMMENT: Argument should be rechecked. Does it hold when \alpha=0 or when u is in C^{1,1}_s using modulus of continuity?
%CP COMMENT: The argument holds as long as x_d u_{x_ix_j} is continuous up to x_d=0 and the first order derivatives are Holder continuous up to x_d=0. From what I remember the definition of C^{1,1}_s does not mean that x_d u_{x_ix_j} is continuous up to x_d=0, only bounded, and also it does not imply that the first order derivatives are Holder continuous up to x_d=0.
First, we consider the case $1 \leq i,j\leq d-1$. Because the seminorm $[x_du_{x_ix_j}]_{C^{\alpha}_{s,\loc}(\overline{\HH}_T)}$ is finite, the function $x_du_{x_ix_j}$ is uniformly continuous on compact subsets of $\overline{\HH}_T$, and so, the limit in \eqref{eq:PropSecondOrderDeriv} exists. We assume, to obtain a contradiction, that
\begin{equation}
\label{eq:PositiveLimit}
\lim_{\overline{\HH}_T \ni P \rightarrow \bar P} x_d u_{x_ix_j}( P) = a \neq 0,
\end{equation}
and we can further assume, without loss of generality, that this limit is positive. Then, there is a constant, $\eps>0$, such that for all $P=(t,x',x_d)\in\overline{\HH}_T$ satisfying
\begin{equation}
\label{eq:BallEps}
0<x_d<\eps, \quad |t-\bar t|<\eps, \quad |x'-\bar x'| < \eps,
\end{equation}
we have
\begin{equation}
\label{eq:PropSecondOrderDeriv1}
\begin{aligned}
\frac{a}{2x_d} \leq u_{x_ix_j}(t,x',x_d).
\end{aligned}
\end{equation}
Let $P_1=(t,x^1)$ and $P_2=(t, x^2)$ be points satisfying \eqref{eq:BallEps} and such that all except the $x_i$-coordinates are identical. Then, by integrating \eqref{eq:PropSecondOrderDeriv1} with respect to $x_i$, we obtain
\[
\frac{a(x_i^2-x_i^1)}{2x_d} \leq u_{x_j}(P_2)-u_{x_j}(P_1),
\]
and thus,
\begin{equation}
\label{eq:PropSecondOrderDeriv2}
\begin{aligned}
\frac{a(x_i^2-x_i^1)}{2x_d s^{\alpha}(P_1,P_2)} \leq \frac{u_{x_j}(P_2)-u_{x_j}(P_1)}{s^{\alpha}(P_1,P_2)}.
\end{aligned}
\end{equation}
We can choose $P_1$, $P_2$ such that $x_i^2-x_i^1=\eps/2$, for all $0<x_d <\eps/2$. Then, by taking the limit as $x_d$ goes to zero, the left-hand side of \eqref{eq:PropSecondOrderDeriv2} diverges, while the right-hand side is finite since $[u_{x_j}]_{C^{\alpha}_s(\HH_T)}$ is bounded. This contradicts \eqref{eq:PositiveLimit} and so \eqref{eq:PropSecondOrderDeriv} holds.

The case where $i=d$ or $j=d$ can be treated as in the proof of \cite[Proposition I.12.1]{DaskalHamilton1998}.
\end{proof}

Next, we establish the analogue of \cite[Theorem~8.8.1]{Krylov_LecturesHolder} for the H\"older space $C^{2+\alpha}_s(\overline{\HH}_T)$.

\begin{lem} [Interpolation inequalities for Daskalopoulos-Hamilton-Koch H{\"o}lder spaces]
\label{lem:InterpolationIneqS}
Let $R>0$. Then there are positive constants $m=m(d,\alpha)$ and $C=C(T, R, d, \alpha)$ such that for any $u \in C^{2+\alpha}_s(\overline{\HH}_T)$ with compact support in $[0,\infty)\times\bar{B}_R(x^0)$, for some $x^0 \in \partial \HH$, and any $\eps \in (0,1)$, we have
\begin{align}
\label{eq:InterpolationIneqS1}
\|u\|_{C^{\alpha}_s(\overline{\HH}_T)} &\leq \eps \|u\|_{C^{2+\alpha}_s(\overline{\HH}_T)} + C \eps^{-m} \|u\|_{C(\overline{\HH}_T)},\\
\label{eq:InterpolationIneqS2}
\|u_{x_i}\|_{C(\overline{\HH}_T)} &\leq \eps \|u\|_{C^{2+\alpha}_s(\overline{\HH}_T)} + C \eps^{-m} \|u\|_{C(\overline{\HH}_T)},\\
\label{eq:InterpolationIneqS3}
\|x_d u_{x_i}\|_{C^{\alpha}_s(\overline{\HH}_T)} &\leq \eps \|u\|_{C^{2+\alpha}_s(\overline{\HH}_T)} + C \eps^{-m} \|u\|_{C(\overline{\HH}_T)},\\
\label{eq:InterpolationIneqS4}
\|x_d u_{x_i x_j}\|_{C(\overline{\HH}_T)} &\leq \eps \|u\|_{C^{2+\alpha}_s(\overline{\HH}_T)} + C \eps^{-m} \|u\|_{C(\overline{\HH}_T)}.
\end{align}
\end{lem}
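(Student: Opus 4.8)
The plan is to adapt the proof of \cite[Theorem~8.8.1]{Krylov_LecturesHolder} to the cycloidal distance $s$ and to the degenerate second-order seminorm $\|x_du_{x_ix_j}\|_{C^\alpha_s(\overline{\HH}_T)}$ occurring in $\|\cdot\|_{C^{2+\alpha}_s(\overline{\HH}_T)}$. I would use only elementary features of the cycloidal geometry: for a point at height $x_d^0\geq 0$ and $r\in(0,1)$, the cycloidal ball of radius $r$ about it contains coordinate segments of Euclidean length $\ell(r,x_d^0)\sim\max(r^2,r\sqrt{x_d^0})\geq cr^2$ (because a Euclidean increment $h$ in a coordinate direction has cycloidal length $\sim h/(\sqrt{x_d^0}+\sqrt h)$); and, for any two points of the bounded set $[0,T]\times\bar B_R(x^0)$, one has $\sum_{i=1}^d|x_i^1-x_i^2|\leq C_R\,s(P_1,P_2)$ and $\sqrt{|t_1-t_2|}\leq s(P_1,P_2)$, with $C_R$ depending only on $R$ and $d$. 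Since $x_du_{x_ix_j}$ lies in $C^\alpha_s(\overline{\HH}_T)$, Lemma \ref{lem:PropSecondOrderDeriv} identifies its boundary trace as zero, so $w:=x_du_{x_i}$ vanishes on $\partial\HH$ and is $C^1$ up to $\partial\HH$ with $w_{x_d}=u_{x_i}+x_du_{x_ix_d}$; this legitimizes the mean value theorem and the fundamental theorem of calculus along coordinate segments reaching the boundary, and such segments may be taken in any coordinate direction since $u$ is globally defined on $\overline{\HH}_T$. The four estimates are interdependent, and I would prove them in the order \eqref{eq:InterpolationIneqS2}, \eqref{eq:InterpolationIneqS4}, \eqref{eq:InterpolationIneqS1}, \eqref{eq:InterpolationIneqS3}, feeding each into the next.

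For \eqref{eq:InterpolationIneqS2}, fixing $r\in(0,1)$ and $P_0=(t_0,x^0)$, the mean value theorem for $x_i\mapsto u$ along the segment of length $\ell(r,x_d^0)$ at height $x_d^0$ produces $P_1$ with $s(P_0,P_1)\leq r$ and $|u_{x_i}(P_1)|\leq 2\|u\|_{C(\overline{\HH}_T)}/\ell(r,x_d^0)\leq Cr^{-2}\|u\|_{C(\overline{\HH}_T)}$; hence $|u_{x_i}(P_0)|\leq Cr^{-2}\|u\|_{C(\overline{\HH}_T)}+r^\alpha\|u\|_{C^{2+\alpha}_s(\overline{\HH}_T)}$, and taking the supremum in $P_0$ and putting $r=\eps^{1/\alpha}$ yields \eqref{eq:InterpolationIneqS2} with $m=2/\alpha$ (at the boundary $u(t_0,\cdot,0)$ is $C^1$ with gradient $u_{x'}(t_0,\cdot,0)$, so the argument applies there verbatim). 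For \eqref{eq:InterpolationIneqS4} I would run the same step one level up: when $x_d^0>0$ (the case $x_d^0=0$ being trivial), the mean value theorem for $x_j\mapsto u_{x_i}$ along the length-$\ell(r,x_d^0)$ segment at height $x_d^0$ gives $P_1$ with $|u_{x_ix_j}(P_1)|\leq 2\|u_{x_i}\|_{C(\overline{\HH}_T)}/\ell(r,x_d^0)$, so that, using $x_d^0/\ell(r,x_d^0)\leq C_R/r$ and bounding the difference of the two weighted values $x_du_{x_ix_j}$ at $P_0$ and $P_1$ by $r^\alpha\|u\|_{C^{2+\alpha}_s(\overline{\HH}_T)}$ (moving also in $x_d$ when $j=d$), one gets $\|x_du_{x_ix_j}\|_{C(\overline{\HH}_T)}\leq (C_R/r)\|u_{x_i}\|_{C(\overline{\HH}_T)}+r^\alpha\|u\|_{C^{2+\alpha}_s(\overline{\HH}_T)}$; substituting \eqref{eq:InterpolationIneqS2} with parameter $\eps'=r^{1+\alpha}$ and setting $r=\eps^{1/\alpha}$ gives \eqref{eq:InterpolationIneqS4} with some $m=m(\alpha)$.

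For the Hölder seminorms I would split the supremum over pairs $P_1,P_2$ at scale $r$. For $s(P_1,P_2)\geq r$ the difference quotient of $u$ is $\leq 2r^{-\alpha}\|u\|_{C(\overline{\HH}_T)}$ and that of $x_du_{x_i}$ is $\leq 2Rr^{-\alpha}\|u_{x_i}\|_{C(\overline{\HH}_T)}$, the latter rendered admissible by \eqref{eq:InterpolationIneqS2}. For $\sigma:=s(P_1,P_2)<r$, I would join $P_1$ to $P_2$ by coordinate segments at the fixed time $t_1$ followed by a segment in $t$. In \eqref{eq:InterpolationIneqS1} the spatial part is $\leq\big(\sum_{i=1}^d|x_i^1-x_i^2|\big)\max_i\|u_{x_i}\|_{C(\overline{\HH}_T)}\leq C_R\sigma\max_i\|u_{x_i}\|_{C(\overline{\HH}_T)}$ and the time part $\leq|t_1-t_2|\,\|u_t\|_{C(\overline{\HH}_T)}\leq\sigma^2\|u\|_{C^{2+\alpha}_s(\overline{\HH}_T)}$, so the difference quotient is $\leq C_R\sigma^{1-\alpha}\max_i\|u_{x_i}\|_{C(\overline{\HH}_T)}+\sigma^{2-\alpha}\|u\|_{C^{2+\alpha}_s(\overline{\HH}_T)}$, and \eqref{eq:InterpolationIneqS2} plus optimization in $r$ gives \eqref{eq:InterpolationIneqS1}. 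For the spatial part of \eqref{eq:InterpolationIneqS3}, since $w_{x_j}=x_du_{x_ix_j}$ ($j<d$) and $w_{x_d}=u_{x_i}+x_du_{x_ix_d}$ are each bounded in $C(\overline{\HH}_T)$ by $\|u_{x_i}\|_{C(\overline{\HH}_T)}+\|u\|_{C^{2+\alpha}_s(\overline{\HH}_T)}$ and $w\equiv 0$ on $\partial\HH$, integrating along the path contributes $\leq C_R\sigma(\|u_{x_i}\|_{C(\overline{\HH}_T)}+\|u\|_{C^{2+\alpha}_s(\overline{\HH}_T)})$, hence $\leq C_Rr^{1-\alpha}(\cdots)$ after dividing by $\sigma^\alpha$, again admissible by \eqref{eq:InterpolationIneqS2}.

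The hard part will be the time contribution to the seminorm in \eqref{eq:InterpolationIneqS3}, namely $x_d^2\,|u_{x_i}(t_1,x^2)-u_{x_i}(t_2,x^2)|/\sigma^\alpha$, since $u_{x_it}$ is not controlled and one cannot integrate in $t$. I would instead apply the already-proven \eqref{eq:InterpolationIneqS2} to the time increment $v:=u(t_1,\cdot)-u(t_2,\cdot)$, viewed as a time-independent element of $C^{2+\alpha}_s(\overline{\HH}_T)$ with $\|v\|_{C(\overline{\HH})}\leq|t_1-t_2|\,\|u_t\|_{C(\overline{\HH}_T)}$ and $\|v\|_{C^{2+\alpha}_s(\overline{\HH})}\leq 2\|u\|_{C^{2+\alpha}_s(\overline{\HH}_T)}$; optimizing the parameter in \eqref{eq:InterpolationIneqS2} turns it into the multiplicative inequality $\|v_{x_i}\|_{C(\overline{\HH})}\leq C\|v\|_{C(\overline{\HH})}^{m/(m+1)}\|v\|_{C^{2+\alpha}_s(\overline{\HH})}^{1/(m+1)}$, whence the time contribution is $\leq C_R|t_1-t_2|^{m/(m+1)-\alpha/2}\|u_t\|_{C(\overline{\HH}_T)}^{m/(m+1)}\|u\|_{C^{2+\alpha}_s(\overline{\HH}_T)}^{1/(m+1)}$. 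With $m=2/\alpha$ the exponent $m/(m+1)-\alpha/2=(4-2\alpha-\alpha^2)/(2(2+\alpha))$ is positive for $\alpha\in(0,1)$, so for $|t_1-t_2|<r^2$ this factor is $\leq r^{2(m/(m+1)-\alpha/2)}$, small for small $r$, and Young's inequality converts the product of norms into an expression $\eps\|u\|_{C^{2+\alpha}_s(\overline{\HH}_T)}+C\eps^{-m'}\|u\|_{C(\overline{\HH}_T)}$. Collecting the contributions and optimizing $r$ together with the auxiliary parameters from the successive uses of \eqref{eq:InterpolationIneqS2} produces \eqref{eq:InterpolationIneqS3} and fixes the final exponent $m=m(d,\alpha)$, with all constants depending only on $R$, $d$, $\alpha$ (and $T$) through the elementary bounds recorded above.
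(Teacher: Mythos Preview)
Your overall strategy matches the paper's, and the arguments for \eqref{eq:InterpolationIneqS2}, \eqref{eq:InterpolationIneqS4}, \eqref{eq:InterpolationIneqS1}, and the \emph{spatial} part of the H\"older seminorm in \eqref{eq:InterpolationIneqS3} are essentially correct. The gap is in the time contribution to \eqref{eq:InterpolationIneqS3}. Optimising $\eps$ in an inequality of the form $\|v_{x_i}\|_{C}\le \eps\,\|v\|_{C^{2+\alpha}_s}+C\eps^{-m}\|v\|_{C}$ yields
\[
\|v_{x_i}\|_{C}\le C'\,\|v\|_{C}^{\,1/(m+1)}\,\|v\|_{C^{2+\alpha}_s}^{\,m/(m+1)},
\]
not the version with swapped exponents that you wrote down. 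With $m=2/\alpha$ this gives the exponent $1/(m+1)=\alpha/(2+\alpha)$ on $\|v\|_C$, hence a time factor $|t_1-t_2|^{\alpha/(2+\alpha)-\alpha/2}=|t_1-t_2|^{-\alpha^2/(2(2+\alpha))}$, which is \emph{negative}; the ``small factor'' you need simply isn't there. The computation ``$m/(m+1)-\alpha/2=(4-2\alpha-\alpha^2)/(2(2+\alpha))>0$'' is arithmetically correct but is based on the wrong exponent, so the argument as written does not close.

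The paper handles the time seminorm of $x_du_{x_i}$ by a direct device rather than by feeding \eqref{eq:InterpolationIneqS2} back in: with $P_k=(t_k,x)$ and $\delta=\sqrt{|t_1-t_2|}$, replace each $u_{x_i}(t_k,x)$ by the finite difference $\delta^{-1}\bigl(u(t_k,x+\delta e_i)-u(t_k,x)\bigr)$. The two replacement errors are each $\le \delta\,|u_{x_ix_i}(P_k^{*})|$ by the mean value theorem, and the difference of the two quotients is $\le 2\delta^{-1}|t_1-t_2|\,\|u_t\|_{C}=2\delta\,\|u_t\|_{C}$. Multiplying by $x_d$ (bounded on the support) and dividing by $s^\alpha(P_1,P_2)=\delta^{\alpha}$ produces
\[
\frac{|x_du_{x_i}(P_1)-x_du_{x_i}(P_2)|}{s^\alpha(P_1,P_2)}
\le C\,\delta^{\,1-\alpha}\Bigl(\|x_du_{x_ix_i}\|_{C}+\|x_du_t\|_{C}\Bigr)
\le C\,\delta^{\,1-\alpha}\,\|u\|_{C^{2+\alpha}_s},
\]
which is exactly the small factor needed for $|t_1-t_2|<\eta$. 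The point is that the weighted second derivative $\|x_du_{x_ix_i}\|_{C}$ and the time derivative $\|u_t\|_{C}$ are both already part of $\|u\|_{C^{2+\alpha}_s}$, so one does not need to pass through the multiplicative interpolation at all. If you substitute this step for your appeal to \eqref{eq:InterpolationIneqS2} on $v=u(t_1,\cdot)-u(t_2,\cdot)$, the rest of your outline goes through.
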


\begin{rmk}
Notice that Lemma \ref{lem:InterpolationIneqS} does not establish the analogue of \cite[Inequality (8.8.4)]{Krylov_LecturesHolder}, that is,
\[
[u_{x_i}]_{C^{\alpha}_{\rho}(\overline{\HH}_T)} \leq \eps \|u\|_{C^{2+\alpha}_{\rho}(\overline{\HH}_T)} + C \eps^{-m} \|u\|_{C(\overline{\HH}_T)}.
\]
This is replaced by the weighted inequality \eqref{eq:InterpolationIneqS3}.
\end{rmk}

\begin{proof} [Proof of Lemma \ref{lem:InterpolationIneqS}]
We consider $\eta\in(0,1)$, to be suitably chosen during the proofs of each of the desired inequalities.

\begin{step}[Proof of inequality \eqref{eq:InterpolationIneqS1}]
We only need to show that the first inequality \eqref{eq:InterpolationIneqS1} holds for the seminorm $[u]_{C^{\alpha}_s(\overline{\HH}_T)}$. It is enough to consider differences, $u(P_1)-u(P_2)$, where all except one of the coordinates of the points $P_1, P_2 \in \overline{\HH}_T$ are identical. We outline the proof when the $x_i$-coordinates of $P_1$ and $P_2$ differ, but the case of the $t$-coordinate can be treated in a similar manner. We consider two situations: $|x^1_i-x^2_i|\leq \eta$ and $|x^1_i-x^2_i|> \eta$.

\begin{case}[Points with $x_i$-coordinates close together]
Assume $|x^1_i-x^2_i|\leq \eta$. We have
\begin{equation}
\label{eq:InterpolationIneqS1_1}
\begin{aligned}
|u(P_1)-u(P_2)| &\leq |x^1_i-x^2_i| \|u_{x_i}\|_{C(\overline{\HH}_T)} \\
                &\leq \eta \frac{|x^1_i-x^2_i|}{\eta} \|u\|_{C^{2+\alpha}_s(\overline{\HH}_T)}\\
                &\leq \eta \left(\frac{|x^1_i-x^2_i|}{\eta} \right)^{\alpha} \|u\|_{C^{2+\alpha}_s(\overline{\HH}_T)}\\
                &\leq \eta^{1-\alpha} \left(2\sqrt{x_d}+\sqrt{|x^1_i-x^2_i|}\right)^{\alpha} s^{\alpha}(P_1,P_2)\|u\|_{C^{2+\alpha}_s(\overline{\HH}_T)},
\end{aligned}
\end{equation}
where in the last line we used the fact that, by \eqref{eq:CycloidalMetric},
\begin{equation}
\label{eq:FormulaS1}
s(P_1,P_2) = \frac{|x^1_i-x^2_i|}{ 2\sqrt{x_d} + \sqrt{|x^1_i-x^2_i|}}.
\end{equation}
Because $u$ has compact support in the spatial variable, we obtain in \eqref{eq:InterpolationIneqS1_1} that there exists a positive constant $C=C(\alpha, R)$ such that
\begin{equation}
\label{eq:InterpolationIneqS1_2}
\begin{aligned}
\frac{|u(P_1)-u(P_2)|}{s^{\alpha}(P_1,P_2)} \leq C\eta^{1-\alpha}  \|u\|_{C^{2+\alpha}_s(\overline{\HH}_T)},
\end{aligned}
\end{equation}
which concludes this case.
\end{case}

\begin{case}[Points with $x_i$-coordinates farther apart]
Assume $|x^1_i-x^2_i|> \eta$. By \eqref{eq:FormulaS1}, we have
\[
1<\left(\frac{|x^1_i-x^2_i|}{\eta} \right)^{\alpha} = \eta^{-\alpha}\left(2\sqrt{x_d}+\sqrt{|x^1_i-x^2_i|}\right)^{\alpha} s^{\alpha}(P_1,P_2).
\]
Because it suffices to consider points $P_1$ and $P_2$ in the support of $u$, there is a positive constant $C$, depending at most on $\alpha$ and $R$, such that
\[
1 \leq C\eta^{-\alpha}  s^{\alpha}(P_1,P_2).
\]
Therefore,
\[
|u(P_1)-u(P_2)| \leq 2 \|u\|_{C(\overline{\HH}_T)} \leq  C\eta^{-\alpha} s^{\alpha}(P_1,P_2) \|u\|_{C(\overline{\HH}_T)},
\]
which is equivalent to
\begin{equation}
\label{eq:InterpolationIneqS1_3}
\begin{aligned}
\frac{|u(P_1)-u(P_2)|}{s^{\alpha}(P_1,P_2)} &\leq C\eta^{-\alpha} \|u\|_{C^{0}(\overline{\HH}_T)},
\end{aligned}
\end{equation}
which concludes this case.
\end{case}
By combining  \eqref{eq:InterpolationIneqS1_2} and \eqref{eq:InterpolationIneqS1_3}, we obtain
\[
[u]_{C^{\alpha}_s(\overline{\HH}_T)} \leq C \eta^{1-\alpha} \|u\|_{C^{2+\alpha}_s(\overline{\HH}_T)} + C \eta^{-\alpha} \|u\|_{C^)(\overline{\HH}_T)}.
\]
Since $\eps\in(0,1)$, we may choose $\eta\in(0,1)$ such that $\eps=C\eta^{1-\alpha}$. The preceding inequality then gives \eqref{eq:InterpolationIneqS1}.
\end{step}

\begin{step}[Proof of inequality \eqref{eq:InterpolationIneqS2}]
Let $P\in \overline{\HH}_T$. Then, for any $\eta>0$, we have
\begin{align*}
|u_{x_i}(P)| &\leq \left|u_{x_i}(P)-\eta^{-1} \left(u(P+\eta e_i) - u(P)\right)\right| + 2 \eta^{-1} \|u\|_{C(\overline{\HH}_T)}\\
             &=    |u_{x_i}(P)- u_{x_i}(P+\eta\theta e_i)| + 2 \eta^{-1} \|u\|_{C(\overline{\HH}_T)}\\
             &=    \frac{|u_{x_i}(P)- u_{x_i}(P+\eta\theta e_i)|}
             {s^{\alpha}(P,P+\eta\theta e_i)}s^{\alpha}(P,P+\eta\theta e_i)
                    + 2 \eta^{-1} \|u\|_{C(\overline{\HH}_T)},
\end{align*}
for some constant $\theta \in [0,1]$.  Using
\begin{equation}
\label{eq:ADDS}
s(P,P+\eta\theta e_i) \leq  \eta^{1/2},\quad \forall\, P\in \overline{\HH}_T,
\end{equation}
we have
\begin{equation}
\label{eq:PartC}
|u_{x_i}(P)| \leq \eta^{\alpha/2}[u_{x_i}]_{C^{\alpha}_s(\overline{\HH}_T)}
                    + 2 \eta^{-1} \|u\|_{C(\overline{\HH}_T)}, \quad \forall\, P\in \overline{\HH}_T.
\end{equation}
Since $\eps\in(0,1)$, we may choose $\eta\in(0,1)$ such that $\eps=\eta^{\alpha/2}$. Then \eqref{eq:InterpolationIneqS2} follows from \eqref{eq:PartC}.
\end{step}

\begin{step}[Proof of inequality \eqref{eq:InterpolationIneqS3}]
Because $u$ has compact support in the spatial variable, then \eqref{eq:InterpolationIneqS2} gives, for some positive constant $C=C(\alpha,R)$,
\begin{equation}
\label{eq:InterpolationIneqS2_1}
\begin{aligned}
\|x_du_{x_i}\|_{C(\overline{\HH}_T)} \leq C \eps \|u\|_{C^{2+\alpha}_s(\overline{\HH}_T)} +  C \eps^{-m} \|u\|_{C(\overline{\HH}_T)}.
\end{aligned}
\end{equation}
This gives the desired bound in \eqref{eq:InterpolationIneqS3} for the term $\|x_du_{x_i}\|_{C(\overline{\HH}_T)}$.  It remains to prove the estimate \eqref{eq:InterpolationIneqS3} for the H\"older seminorm $[x_d u_{x_i}]_{C^{\alpha}_s(\overline{\HH}_T)}$.  As in the proof of \eqref{eq:InterpolationIneqS1}, it suffices to consider the differences $x_d^1u_{x_i}(P_1)-x_d^2u_{x_i}(P_2)$, where all except one of the coordinates of the points $P_1, P_2 \in \overline{\HH}_T$ are identical.

First, we consider the case when only the $x_d$-coordinates of the points $P_1$ and $P_2$ differ. We denote $P_k=(t,x',x_d^k)$, $k=1,2$.

\setcounter{case}{0}
\begin{case}[Points with $x_d$-coordinates close together] Assume $|x^1_d-x^2_d|\leq \eta$. Using
\[
(x_d u_{x_i})_{x_d} = x_d u_{x_i x_d} + u_{x_i}
\]
and the mean value theorem, there is a point $P^*$ on the line segment connecting $P_1$ and $P_2$ such that,
\[
x_d^1u_{x_i}(P_1)-x_d^2u_{x_i}(P_2)  = \left(x_d^* u_{x_i x_d}(P^*) + u_{x_i}(P^*)\right) (x_d^1-x_d^2),
\]
and so,
\begin{equation*}
\begin{aligned}
|x_d^1u_{x_i}(P_1)-x_d^2u_{x_i}(P_2)| &\leq \eta \left(\frac{|x^1_d-x^2_d|}{\eta} \right)^{\alpha} \|u\|_{C^{2+\alpha}_s(\overline{\HH}_T)}\\
                &\leq \eta^{1-\alpha} \left(\sqrt{x^1_d}+\sqrt{x^2_d}+\sqrt{|x^1_d-x^2_d|}\right)^{\alpha} s^{\alpha}(P_1,P_2)\|u\|_{C^{2+\alpha}_s(\overline{\HH}_T)}.
\end{aligned}
\end{equation*}
Because $u$ has compact support in the spatial variable, there is a positive constant $C=C(\alpha, R)$ such that
\begin{equation}
\label{eq:InterpolationIneqS2_2}
\begin{aligned}
\frac{|x_d^1u_{x_i}(P_1)-x_d^2u_{x_i}(P_2)|}{s^{\alpha}(P_1,P_2)} &\leq C \eta^{1-\alpha} \|u\|_{C^{2+\alpha}_s(\overline{\HH}_T)},
\end{aligned}
\end{equation}
which concludes this case.
\end{case}

\begin{case}[Points with $x_d$-coordinates
farther apart] Assume $|x^1_d-x^2_d|> \eta$. We have
\begin{equation*}
\begin{aligned}
\frac{|x_d^1u_{x_i}(P_1)-x_d^2u_{x_i}(P_2)|}{s^{\alpha}(P_1,P_2)}
&\leq 2\frac{\|x_d u_{x_i}\|_{C(\overline{\HH}_T)}}{|x^1_d-x^2_d|^{\alpha}} \left(\sqrt{x^1_d} + \sqrt{x^2_d} + \sqrt{|x^1_d-x^2_d|}\right)^{\alpha}\\
&\leq C\eta^{-\alpha} \|x_d u_{x_i}\|_{C(\overline{\HH}_T)}.
\end{aligned}
\end{equation*}
Since $\eps\in(0,1)$, we may choose $\eta$ such that $\eps=\eta^{\alpha+1}$ in \eqref{eq:InterpolationIneqS2_1}. We obtain
\begin{equation}
\label{eq:InterpolationIneqS2_xdfarapart}
\frac{|x_d^1u_{x_i}(P_1)-x_d^2u_{x_i}(P_2)|}{s^{\alpha}(P_1,P_2)}
\leq C \eta \|u\|_{C^{2+\alpha}_s(\overline{\HH}_T)} + C \eta^{-m(1+\alpha)-\alpha} \|u\|_{C(\overline{\HH}_T)},
\end{equation}
which concludes this case.
\end{case}
Combining \eqref{eq:InterpolationIneqS2_2} and \eqref{eq:InterpolationIneqS2_xdfarapart} gives
\begin{equation}
\label{eq:InterpolationIneqS2_3}
\begin{aligned}
\frac{|x_d^1u_{x_i}(P_1)-x_d^2u_{x_i}(P_2)|}{s^{\alpha}(P_1,P_2)}
\leq C\eta^{1-\alpha} \|u\|_{C^{2+\alpha}_s(\overline{\HH}_T)} + C\eta^{-m(1+\alpha)-\alpha} \|u\|_{C(\overline{\HH}_T)}.
\end{aligned}
\end{equation}
A similar argument, when only the $x_i$-coordinates of the points $P_1$ and $P_2$ differ, $1\leq i\leq d-1$, also yields \eqref{eq:InterpolationIneqS2_3}.

Next, we consider the case when only the $t$-coordinates of the points $P_1$ and $P_2$ differ. We denote $P_k=(x, t_k)$, $k=1,2$. We shall only describe the proof of the interpolation
inequality for $u_{x_i}$ when $i\neq d$, as the case $i=d$ follows by a similar argument. We denote $\delta=\sqrt{|t_1-t_2|}$.

\begin{case}[Points with $t$-coordinates close together] Assume $|t_1-t_2|<\eta$. We have
\begin{equation*}
\begin{aligned}
|u_{x_i}(P_1) - u_{x_i}(P_2)|
&\leq
\left|u_{x_i}(x,t_1) - \frac{1}{\delta} \left( u(x+\delta e_i,t_1) - u(x,t_1)\right)\right| \\
&\quad +\left|u_{x_i}(x,t_2) - \frac{1}{\delta} \left( u(x+\delta e_i,t_2) - u(x,t_2)\right)\right| \\
&\quad+ \frac{1}{\delta} |u(x+\delta e_i,t_1) - u(x+\delta e_i,t_2)| +
   \frac{1}{\delta} |u(x,t_1) - u(x,t_2)|.
\end{aligned}
\end{equation*}
By the mean value theorem, there are points $P^*_k\in\overline{\HH}_T$, $k=1,2$, such that
\begin{equation*}
\begin{aligned}
|u_{x_i}(P_1) - u_{x_i}(P_2)|
&= |u_{x_i}(x,t_1) - u_{x_i}(x+\theta_1\delta e_i,t_1)| +
   |u_{x_i}(x,t_2) - u_{x_i}(x+\theta_2\delta e_i,t_2)| \\
&\quad+  \frac{|t_1-t_2|}{\delta} |u_t(x+\delta e_i,t^*_1)| +
    \frac{|t_1-t_2|}{\delta} |u_t(x,t^*_2)|\\
&\leq |u_{x_ix_i}(P_1^*,t_1)| \delta + |u_{x_ix_i}(P_2^*,t_2) | \delta\\
      &\quad + \frac{|t_1-t_2|}{\delta} |u_t(x+\delta e_i,t^*_1)| +
       \frac{|t_1-t_2|}{\delta} |u_t(x,t^*_2)|.
\end{aligned}
\end{equation*}
Notice that $s(P_1,P_2) = \sqrt{|t_1-t_2|}=\delta$ and so, by multiplying the preceding inequality by $x_d$ and using the fact that $u$ has compact support, we obtain
\begin{equation*}
\frac{|x_du_{x_i}(P_1) - x_du_{x_i}(P_2)|}{s^{\alpha}(P_1,P_2)}
\leq 2\|x_d u_{x_ix_i}\|_{C^{0}(\overline{\HH}_T)} |t_1-t_2|^{\frac{1-\alpha}{2}}+
      2|t_1-t_2|^{1-\frac{1+\alpha}{2}}  \|x_du_t\|_{C^{0}(\overline{\HH}_T)},
\end{equation*}
and thus
\begin{equation}
\label{eq:InterpolationIneqS2_4}
\begin{aligned}
\frac{|x_du_{x_i}(P_1) - x_du_{x_i}(P_2)|}{s^{\alpha}(P_1,P_2)}
\leq C\eta^{\frac{1-\alpha}{2}}  \|u\|_{C^{2+\alpha}_s(\overline{\HH}_T)},
\end{aligned}
\end{equation}
where $C$ is a positive constant depending only on $R$.
\end{case}

\begin{case}[Points with $t$-coordinates
farther apart] Assume  $|t_1-t_2|\geq\eta$. This case is easier, as usual, because
\begin{equation}
\label{eq:InterpolationIneqS2_5}
\begin{aligned}
\frac{|x_du_{x_i}(P_1) - x_du_{x_i}(P_2)|}{s^{\alpha}(P_1,P_2)}
&\leq& 2 \eta^{-\frac{\alpha}{2}} \|x_du_{x_i}\|_{C(\overline{\HH}_T)},
\end{aligned}
\end{equation}
which concludes this case.
\end{case}
By combining inequalities \eqref{eq:InterpolationIneqS2_4} and \eqref{eq:InterpolationIneqS2_5}, we obtain
\begin{equation}
\label{eq:InterpolationIneqS2_6}
\begin{aligned}
\frac{|x_du_{x_i}(P_1) - x_du_{x_i}(P_2)|}{s^{\alpha}(P_1,P_2)}
&\leq C\eta^{\frac{1-\alpha}{2}}  \|u\|_{C^{2+\alpha}_s(\overline{\HH}_T)} + 2 \eta^{-\frac{\alpha}{2}} \|x_du_{x_i}\|_{C(\overline{\HH}_T)}.
\end{aligned}
\end{equation}
By \eqref{eq:InterpolationIneqS2_3} and \eqref{eq:InterpolationIneqS2_6}, we have
\[
\left[x_du_{x_i}\right]_{C^{\alpha}_s(\overline{\HH}_T)}
\leq C\eta^{\alpha_0}  \|u\|_{C^{2+\alpha}_s(\overline{\HH}_T)} + 2 \eta^{-m_0} \|x_du_{x_i}\|_{C(\overline{\HH}_T)},
\]
where $\alpha_0:=\min\{\alpha, 1-\alpha, (1-\alpha)/2\}$ and $m_0:=4+\alpha$. Without loss of generality, we may assume $C\geq1$. Since $\eps\in(0,1)$, we may choose $\eta\in (0,1)$ such that $\eps=C\eta^{\alpha_0}$ in the preceding inequality, and so we obtain the estimate \eqref{eq:InterpolationIneqS3} for $[x_du_{x_d}]_{C^{\alpha}_s(\HH_T)}$. This concludes the proof of \eqref{eq:InterpolationIneqS3}.
\end{step}

\begin{step}[Proof of inequality \eqref{eq:InterpolationIneqS4}]
For any $P=(t,x)\in \overline{\HH}_T$, we can find $\theta \in [0,1]$ such that
\begin{equation*}
\begin{aligned}
|x_du_{x_ix_j}(P)|&\leq\left|x_du_{x_ix_j}(P) - \left(x_du_{x_i}(P+\eta e_j) -x_d u_{x_i}(P) \right)\right|
                      + 2 \|x_du_{x_i}\|_{C(\overline{\HH}_T)},
\end{aligned}
\end{equation*}
and thus
\begin{equation}
\label{eq:ADD1}
\begin{aligned}
|x_du_{x_ix_j}(P)|&\leq  \left|x_du_{x_ix_j}(P) - x_du_{x_ix_j}(P+\theta \eta e_j) \right| + 2 \|x_du_{x_i}\|_{C(\overline{\HH}_T)},
\end{aligned}
\end{equation}
where $1\leq i,j\leq d$. If $j \neq d$, we have
\begin{equation*}
\begin{aligned}
|x_du_{x_ix_j}(P)|&\leq\frac{|x_du_{x_ix_j}(P) - x_du_{x_ix_j}(P+\theta \eta e_j)|}{ s^{\alpha}(P,P+\theta\eps e_j)}
                          s^{\alpha}(P,P+\theta\eta e_j) + 2 \|x_du_{x_i}\|_{C(\overline{\HH}_T)} \\
                    &\leq  C \eta^{\alpha/2}[x_du_{x_ix_j}]_{C^{\alpha}_s(\overline{\HH}_T)}
                             + 2 \|x_du_{x_i}\|_{C(\overline{\HH}_T)}, \hbox{  (by \eqref{eq:ADDS}).}
\end{aligned}
\end{equation*}
Because $\eps\in(0,1)$, we may choose $\eta\in(0,1)$ such that $\eps=C\eta^{\alpha/2}$ in the preceding inequality and combining the resulting inequality with \eqref{eq:InterpolationIneqS3}, we see that the estimate \eqref{eq:InterpolationIneqS4} for $\|x_du_{x_ix_j}\|_{C(\overline{\HH}_T)}$ holds for all $j \neq d$.

Next, we consider the case $j=d$. For brevity, we denote $P'=P+\theta\eta e_d=(t,x',x'_d)$ and $P^{''}=(t,x',0) $. We consider two distinct cases depending on whether $\eta < x'_d/2$ or $\eta \geq x'_d/2$.

\setcounter{case}{0}
\begin{case}[Points with $x_d$-coordinates farther apart] Assume $\eta < x'_d/2$. By \eqref{eq:ADD1}, we obtain
\begin{equation}
\label{eq:InterpolationIneq4_11}
\begin{aligned}
|x_du_{x_ix_d}(P)| &\leq  \frac{|x_du_{x_ix_d}(P) - x'_du_{x_ix_d}(P')|}{ s^{\alpha}(P,P')} s^{\alpha}(P,P')
\\
&\quad + |(x'_d-x_d) u_{x_ix_d}(P')| + 2 \|x_du_{x_i}\|_{C(\overline{\HH}_T)},
\end{aligned}
\end{equation}
and so, using \eqref{eq:ADDS} and the fact that $|x'_d-x_d|\leq \eta$, by definitions of points $P$ and $P'$,
\begin{equation*}
|x_du_{x_ix_d}(P)| \leq  \eta^{\alpha/2}[x_du_{x_ix_d}]_{C^{\alpha}_s(\overline{\HH}_T)}
                             + \frac{\eta}{x'_d}|x'_d u_{x_ix_d}(P')|
                             + 2 \|x_du_{x_i}\|_{C(\overline{\HH}_T)},
\end{equation*}
which gives, by our assumption that $\eta<x'_d/2$,
\begin{equation}
\label{eq:InterpolationIneqS4_1}
|x_du_{x_ix_d}(P)| \leq  \eta^{\alpha/2}[x_du_{x_ix_d}]_{C^{\alpha}_s(\overline{\HH}_T)}
                             + \frac{1}{2}\|x_d u_{x_ix_d}\|_{C(\overline{\HH}_T)}
                             + 2\|x_du_{x_i}\|_{C(\overline{\HH}_T)}.
\end{equation}
As \eqref{eq:InterpolationIneqS4_1} holds for all $P\in\overline{\HH}_T$, we have
\begin{equation*}
\begin{aligned}
\|x_du_{x_ix_d}\|_{C(\overline{\HH}_T)}
&\leq   \frac{1}{2}\|x_d u_{x_ix_d}\|_{C(\overline{\HH}_T)} + \eta^{\alpha/2}[x_du_{x_ix_d}]_{C^{\alpha}_s(\overline{\HH}_T)}
+ 2\|x_du_{x_i}\|_{C(\overline{\HH}_T)},
\end{aligned}
\end{equation*}
or
\begin{equation}
\label{eq:Adaugare1}
\begin{aligned}
\|x_du_{x_ix_d}\|_{C(\overline{\HH}_T)}
&\leq  2\eta^{\alpha/2}[x_du_{x_ix_d}]_{C^{\alpha}_s(\overline{\HH}_T)}
+ 4\|x_du_{x_i}\|_{C(\overline{\HH}_T)},
\end{aligned}
\end{equation}
which concludes this case.
\end{case}

\begin{case}[Points with $x_d$-coordinates close together] Assume $\eta \geq x'_d/2$. Recall that $x'_d = x_d + \theta\eta$, for some $\theta\in [0,1]$, so that $|x'_d-x_d| \leq x'_d$. From Lemma \ref{lem:PropSecondOrderDeriv}, we have
\[
x_d u_{x_ix_d} \rightarrow 0, \hbox{  as  } x_d \rightarrow 0.
\]
Therefore, we obtain
\begin{align*}
|(x'_d-x_d) u_{x_ix_d}(P')| &\leq |x'_du_{x_ix_d}(P')|
= \frac{|x'_du_{x_ix_d}(P') - 0|}{s^{\alpha}(P',P^{''})} s^{\alpha}(P',P^{''}) \\
&\leq [x_d u_{x_ix_d}]_{C^{\alpha}_s(\overline{\HH}_T)} (2\eta)^{\alpha/2},
\end{align*}
where the second inequality follows from the fact that
\[
s(P',P^{''}) \leq \sqrt{x'_d}\leq\sqrt{2\eta}.
\]
By a calculation similar to that which led to \eqref{eq:InterpolationIneq4_11}, we obtain
\begin{equation*}
\begin{aligned}
|x_du_{x_ix_d}(P)| &\leq  \frac{|x_du_{x_ix_d}(P) - x'_du_{x_ix_d}(P')|}{ s^{\alpha}(P,P')} s^{\alpha}(P,P')
\\
&\quad + |(x'_d-x_d) u_{x_ix_d}(P')| + 2 \|x_du_{x_i}\|_{C(\overline{\HH}_T)},
\end{aligned}
\end{equation*}
and hence
\begin{equation}
\label{eq:InterpolationIneqS4_2}
\begin{aligned}
|x_du_{x_ix_d}(P)| \leq  C \eta^{\alpha/2}[x_du_{x_ix_d}]_{C^{\alpha}_s(\overline{\HH}_T)}
                             +(2\eta) ^{\alpha/2} [x_du_{x_ix_d}]_{C^{\alpha}_s(\overline{\HH}_T)}
                             +2\|x_du_{x_i}\|_{C(\overline{\HH}_T)},
\end{aligned}
\end{equation}
which concludes this case.
\end{case}

By combining inequalities \eqref{eq:InterpolationIneqS4_1} and \eqref{eq:InterpolationIneqS4_2}, we obtain, for all $P\in\overline{\HH}_T$,
\[
|x_du_{x_ix_d}(P)| \leq \frac{1}{2}\|x_d u_{x_ix_d}\|_{C(\overline{\HH}_T)}+C \eta^{\alpha/2} [x_du_{x_ix_d}]_{C^{\alpha}_s(\overline{\HH}_T)} +  2\|x_du_{x_i}\|_{C(\overline{\HH}_T)},
\]
which is equivalent to
\[
\|x_du_{x_ix_d}\|_{C(\overline{\HH}_T)}
\leq \frac{1}{2}\|x_d u_{x_ix_d}\|_{C(\overline{\HH}_T)}
     +C \eta^{\alpha/2} [x_du_{x_ix_d}]_{C^{\alpha}_s(\overline{\HH}_T)} +  2\|x_du_{x_i}\|_{C(\overline{\HH}_T)}.
\]
Rearranging terms yields
\begin{equation}
\label{eq:Adaugare2}
\|x_du_{x_ix_d}\|_{C(\overline{\HH}_T)} \leq 2C \eta^{\alpha/2} [x_du_{x_ix_d}]_{C^{\alpha}_s(\overline{\HH}_T)}
+  4\|x_du_{x_i}\|_{C(\overline{\HH}_T)}.
\end{equation}
Since $\eps\in (0,1)$, we may choose $\eta \in (0,1)$ in \eqref{eq:Adaugare1} and \eqref{eq:Adaugare2} such that
$\eps= 4(C+1)\eta^{\alpha/2}$ and so we obtain
\begin{equation*}
\|x_du_{x_ix_d}\|_{C(\overline{\HH}_T)} \leq
\frac{\eps}{2} [x_du_{x_ix_d}]_{C^{\alpha}_s(\overline{\HH}_T)}
+  4\|x_du_{x_i}\|_{C(\overline{\HH}_T)}.
\end{equation*}
Combining the preceding inequality with \eqref{eq:InterpolationIneqS3} applied with $\eps$ replaced by $\eps/8$, we conclude that \eqref{eq:InterpolationIneqS4} holds.
\end{step}
This completes the proof of Lemma \ref{lem:InterpolationIneqS}.
\end{proof}

\subsection{Maximum principle and its applications}
\label{subsec:MaximumPrinciple}
In this subsection, we prove a variant of the classical maximum principle (see \cite[Section 8.1]{Krylov_LecturesHolder} and \cite[Theorem I.3.1]{DaskalHamilton1998}) for parabolic operators, $L$, of the form \eqref{eq:Generator}.

\begin{lem}[Maximum principle]
\label{lem:MaximumPrinciple}
We relax the requirements stated in Assumption \ref{assump:Coeff} on the coefficients $a=(a^{ij}), b=(b^i), c$ of the operator $L$ in \eqref{eq:Generator} to those stated here. Require that the coefficients $a^{ij}$, $b^i$, $c$ be defined on $(0,T]\times\overline\HH$, and the matrix $(a^{ij})$ is non-negative definite on $(0,T]\times\overline\HH$, and $b^d\geq 0$ when $x_d=0$, and $c$ obeys \eqref{eq:ZerothOrderTermUpperBound}, and
\begin{equation}
\label{eq:CoeffMaxPrincTraceGrowth}
\tr(x_d a(t,x)) + x\cdot b(t,x) \leq K(1+|x|^2), \quad \forall\, (t,x)\in\overline\HH_T,
\end{equation}
where $K>0$.
Suppose $u\in C^{1,2}(\HH_T)\cap C(\overline{\HH}_T)$ obeys
\begin{equation}
\label{eq:CondUMaxPrinc1}
u_t, u_{x_i}, x_d u_{x_ix_j} \in C_{\loc}((0,T]\times\overline{\HH}), \quad 1 \leq i,j \leq d,
\end{equation}
and
\begin{equation}
\label{eq:CondUMaxPrinc2}
x_d u_{x_ix_j}=0 \hbox{  on } (0,T]\times\partial{\HH},  \quad 1 \leq i,j \leq d.
\end{equation}
If
\begin{align}
\label{eq:Subsolution}
Lu &\leq 0 \quad\hbox{on } (0,T)\times\HH,
\\
\label{eq:NonPositiveInitialCondition}
u(0,\cdot) &\leq 0 \quad\hbox{on } \overline{\HH},
\end{align}
then
\begin{equation}
\label{eq:SubsolutionUpperBound}
u \leq 0 \quad\hbox{on } [0,T]\times\overline{\HH}.
\end{equation}
\end{lem}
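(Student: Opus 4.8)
The plan is to prove the maximum principle by the classical barrier-and-contradiction method, adapted to handle the unboundedness of the domain and the degeneracy of $L$ along $\partial\HH$. The main new feature compared to the bounded-domain case is that $\overline\HH$ is non-compact, so a supremum of $u$ need not be attained; the standard fix is to subtract a suitable auxiliary function that grows at infinity and then take a limit.

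First I would reduce to a strict subsolution. Fix $\mu > K$ (where $K$ is the upper bound for $c$) and set $v(t,x) := e^{-\mu t}\bigl(u(t,x) - \eps w(t,x)\bigr)$ for a parameter $\eps>0$ and an auxiliary function $w$ to be chosen. The natural choice is something like $w(t,x) := (1+|x|^2)e^{\lambda t}$ for a large constant $\lambda = \lambda(K)$; using the growth hypothesis \eqref{eq:CoeffMaxPrincTraceGrowth}, one computes
\[
Lw = -w_t + \sum_{i,j} x_d a^{ij} w_{x_ix_j} + \sum_i b^i w_{x_i} + cw,
\]
and the key point is that $x_d a^{ij} w_{x_ix_j}$ contributes $2\tr(x_d a)$, $b\cdot w_x$ contributes $2\, x\cdot b$, and $cw \le K(1+|x|^2)e^{\lambda t}$, so by \eqref{eq:CoeffMaxPrincTraceGrowth} all of these are bounded by $C K (1+|x|^2) e^{\lambda t}$, while $-w_t = -\lambda (1+|x|^2) e^{\lambda t}$. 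Choosing $\lambda$ large enough forces $Lw \le -c_0 (1+|x|^2)e^{\lambda t} < 0$ on $(0,T)\times\HH$, and $w>0$ with $w(t,x)\to\infty$ as $|x|\to\infty$, uniformly on $[0,T]$. Hence $L(u-\eps w) = Lu - \eps Lw \ge -\eps Lw > 0$... wait, I would instead arrange signs so that $u - \eps w$ is a strict subsolution in the sense needed; concretely $L(u - \eps w) \le Lu \le 0$ is automatic if $Lw \ge 0$, but since $Lw<0$ we instead track the function directly: suppose for contradiction that $\sup_{[0,T]\times\overline\HH}(u - \eps w) > 0$. Since $u$ is bounded on $\overline\HH_T$ (being in $C(\overline\HH_T)$) while $\eps w \to +\infty$ at spatial infinity, the supremum of $u-\eps w$ is attained at some point $(t_0,x_0)$ with $x_0$ in a bounded set, and $t_0 > 0$ because $u(0,\cdot)-\eps w(0,\cdot) \le -\eps w(0,\cdot) < 0$.

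Next I would derive the contradiction at the interior-or-boundary maximum point $(t_0,x_0)$. If $x_0 \in \HH$ (i.e. $(x_0)_d > 0$), then at an interior spatial maximum with $t_0>0$ we have the usual inequalities $u_t - \eps w_t \ge 0$ (since we may take $t_0 = T$ or use a one-sided derivative; the standard trick is to first consider $T' < T$ and then let $T'\to T$, or to perturb $t_0$), $\nabla(u-\eps w) = 0$, and the Hessian of $u - \eps w$ is negative semidefinite, so $\sum_{i,j} x_d a^{ij}(u-\eps w)_{x_ix_j} \le 0$ by non-negativity of $(a^{ij})$. Combining, $L(u - \eps w)(t_0,x_0) \ge c(t_0,x_0)(u-\eps w)(t_0,x_0) \ge$ something; more precisely $0 \ge -(u-\eps w)_t + (\text{nonpos}) + 0 + c(u-\eps w)$, and using $(u-\eps w)(t_0,x_0) > 0$ together with $c \le K$ and $-(u-\eps w)_t \le 0$ gives a sign contradiction once we have also exploited $L(u-\eps w) = Lu - \eps Lw \ge 0 - \eps Lw > 0$. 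The case $x_0 \in \partial\HH$ (i.e. $(x_0)_d = 0$) is where the degeneracy and the boundary conditions \eqref{eq:CondUMaxPrinc1}--\eqref{eq:CondUMaxPrinc2} enter: there the second-order term $x_d a^{ij} u_{x_ix_j}$ vanishes by \eqref{eq:CondUMaxPrinc2}, so $Lu = -u_t + \sum_i b^i u_{x_i} + cu$ at $(t_0,x_0)$; at a maximum over $\overline\HH$ one has $u_{x_i} = 0$ for $i < d$ and $u_{x_d} \le 0$ (the inward normal derivative is $\le 0$... actually $\ge 0$ is impossible at a max, so $u_{x_d}(t_0,x_0) \le 0$), and since $b^d(t_0,x_0) \ge 0$ we get $b^d u_{x_d} \le 0$; then the same argument as in the interior case yields the contradiction. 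This shows $u - \eps w \le 0$ on $[0,T]\times\overline\HH$ for every $\eps > 0$, and letting $\eps \downarrow 0$ gives $u \le 0$, which is \eqref{eq:SubsolutionUpperBound}.

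The main obstacle I anticipate is making the maximum-point argument rigorous at $t_0$ and at boundary points $x_0 \in \partial\HH$ simultaneously. Since $u$ is only $C^{1,2}$ on the open set $\HH_T$ and merely continuous (with the special regularity \eqref{eq:CondUMaxPrinc1}) up to $(0,T]\times\overline\HH$, one cannot blithely evaluate $Lu$ at a boundary maximum; the correct device is to work first on $\overline\HH_{T-\eta}$ for $\eta>0$ small, observe that the sup of $u - \eps w$ over $[0,T-\eta]\times\overline\HH$ is attained on a compact set, run the argument there, and then let $\eta \downarrow 0$. At a boundary maximum point one uses that the limits in \eqref{eq:CondUMaxPrinc1} exist together with \eqref{eq:CondUMaxPrinc2} to pass the differential inequality $Lu \le 0$ to the boundary in the limiting sense; this is precisely the role of hypotheses \eqref{eq:CondUMaxPrinc1}--\eqref{eq:CondUMaxPrinc2}, which replace the usual requirement $u \in C^{1,2}(\overline\HH_T)$. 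A secondary technical point is verifying the computation $Lw \le -c_0(1+|x|^2)e^{\lambda t}$ cleanly from \eqref{eq:CoeffMaxPrincTraceGrowth}; this is routine once one notes $w_{x_i} = 2 x_i e^{\lambda t}$, $w_{x_ix_j} = 2\delta_{ij} e^{\lambda t}$, so $\sum_{i,j} x_d a^{ij} w_{x_ix_j} = 2 e^{\lambda t}\tr(x_d a)$ and $\sum_i b^i w_{x_i} = 2 e^{\lambda t}\, x\cdot b$, whence $Lw = e^{\lambda t}\bigl(-\lambda(1+|x|^2) + 2\tr(x_d a) + 2\, x\cdot b + c(1+|x|^2)\bigr) \le e^{\lambda t}(1+|x|^2)(2K + K - \lambda)$, which is $\le -c_0(1+|x|^2)e^{\lambda t}$ as soon as $\lambda > 3K$.
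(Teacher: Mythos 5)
Your overall strategy is the same as the paper's: subtract a barrier of the form $(1+|x|^2)$ (packaged with an $e^{\lambda t}$ factor in your version, versus the paper's equivalent device of first conjugating $u$ by $e^{-\lambda t}$ and then subtracting $\delta m\,(1+|x|^2)$), locate the maximum on a bounded set using boundedness of $u$ and spatial growth of the barrier, split into interior ($x_d>0$) and boundary ($x_d=0$) cases, use non-negativity of $(a^{ij})$ in the interior case and \eqref{eq:CondUMaxPrinc1}, \eqref{eq:CondUMaxPrinc2}, $b^d\geq 0$ on $\partial\HH$ in the boundary case, and finally let the small parameter tend to zero. This is exactly the paper's Lemma and Claim structure.

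There are, however, two sign slips worth flagging, the second of which matters. First, with the paper's convention in \eqref{eq:Generator} the quantity you display as $Lw$ is in fact $-Lw$; the correct conclusion is $Lw \geq (\lambda - 3K)(1+|x|^2)e^{\lambda t} \geq 0$ for $\lambda \geq 3K$, not $Lw<0$. With the right sign, $u - \eps w$ is an honest subsolution, $L(u-\eps w) = Lu - \eps Lw \leq 0$, so your ``we instead track the function directly'' workaround is unnecessary; also, your inequality $Lu - \eps Lw \geq 0 - \eps Lw$ presupposes $Lu \geq 0$, the reverse of \eqref{eq:Subsolution}. Second, and more substantively, at the maximum of $u - \eps w$ one only obtains $L(u-\eps w)(t_0,x_0) \geq -c(t_0,x_0)\,(u-\eps w)(t_0,x_0)$, and since $c$ may be positive (it is merely bounded above by $K$), this does not by itself contradict $L(u-\eps w)\leq 0$. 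This is exactly what the exponential conjugation is for: your preamble correctly sets $v := e^{-\mu t}(u - \eps w)$ with $\mu>K$, but you then drop $v$ and argue with $u - \eps w$. If you carry $v$ through the max-point argument, the transformed zeroth-order coefficient is $c - \mu < 0$, so at the maximum $(L+\mu)v(t_0,x_0) \geq (\mu - c(t_0,x_0))\,v(t_0,x_0) > 0$, contradicting $(L+\mu)v \leq 0$. With those corrections the proof is correct and coincides with the paper's (the paper conjugates $u$ by $e^{-\lambda t}$ before subtracting the barrier, which accomplishes the same thing).
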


\begin{proof}
We apply an argument similar to that used in the proofs of \cite[Theorem 2.9.2, Exercises 2.9.4 and 2.9.5]{Krylov_LecturesHolder} (maximum principle for elliptic equations on unbounded domains) and
\cite[Theorems 8.1.2 and 8.1.4]{Krylov_LecturesHolder} (maximum principle for parabolic equations on unbounded domains); see also \cite[Theorem I.3.1]{DaskalHamilton1998}.

We consider the transformation
\begin{align}
\label{eq:FunctionTransformed}
u(t,x)=e^{\lambda t} \tilde u(t,x), \quad
\forall\, (t,x) \in [0,T]\times\overline\HH,
\end{align}
where the constant $\lambda>0$ will be suitably chosen below. The conclusion of the lemma follows if and only if \eqref{eq:SubsolutionUpperBound} holds for $\tilde u$. By \eqref{eq:Subsolution} and definition \eqref{eq:FunctionTransformed}, we have
\[
e^{\lambda t} \left(L+\lambda\right) \tilde u = Lu \leq 0 \quad \hbox{ on } (0,T)\times\HH.
\]
Therefore, by \eqref{eq:Subsolution} and \eqref{eq:NonPositiveInitialCondition}, the function $\tilde u$ satisfies
\begin{align}
\label{eq:SubsolutionTransformed}
\left(L+\lambda\right)\tilde u &\leq 0 \quad\hbox{on } (0,T)\times\HH,
\\
\label{eq:NonPositiveInitialConditionTransformed}
\tilde u(0,\cdot) &\leq 0 \quad\hbox{on } \overline{\HH}.
\end{align}
We may suppose without loss of generality that
\begin{equation}
\label{eq:NonNegMaximum}
m := \sup_{\HH_T} \tilde u \geq 0 ,
\end{equation}
as if $m<0$ we are done; we will show that $m=0$. Define an auxiliary function,
\begin{equation}
\label{eq:DefinitionH}
h(t,x) := 1+|x|^2, \quad \forall\, (t,x) \in \overline{\HH}_T.
\end{equation}
By direct calculation,
\begin{align*}
-\left(L+\lambda \right)h &= \sum_{i,j=1}^d x_da^{ij}h_{x_ix_j} + \sum_{i=1}^d b^ih_{x_i} + (c-\lambda)h - h_t
\\
&=  2x_d\sum_{i=1}^d a^{ii} + 2\sum_{i=1}^d b^ix_i + (c-\lambda)(1+|x|^2)
\\
&\leq \left(2K+c-\lambda\right)(1+|x|^2) \quad\hbox{on } (0,T)\times\HH, \quad \hbox{(by \eqref{eq:CoeffMaxPrincTraceGrowth})}
\end{align*}
By choosing
\begin{equation}
\label{eq:ChoiceLambda}
\lambda \geq 3K,
\end{equation}
we notice that condition \eqref{eq:ZerothOrderTermUpperBound}, gives
\begin{equation}
\label{eq:ZerothOrderTermNegative}
2K+c(t,x)-\lambda \leq 0, \quad \forall\, (t,x) \in\overline\HH_T,
\end{equation}
and so we have
\begin{equation}
\label{eq:AuxiliaryFunction}
\left(L+\lambda\right)h \geq 0 \quad\hbox{on } (0,T)\times\HH.
\end{equation}
Fix $\delta\in(0,1)$ and define another auxiliary function
\begin{equation}
\label{eq:DefnwR}
w := \tilde u - \delta mh.
\end{equation}
From \eqref{eq:SubsolutionTransformed} and \eqref{eq:AuxiliaryFunction}, we have $(L+\lambda)w \leq 0$ on $(0,T)\times\HH$ and thus
\begin{equation}
\label{eq:wRSubsolution}
\left(L+\lambda\right)w \leq 0 \quad\hbox{on } (0,T]\times\overline{\HH},
\end{equation}
since $w_t, w_{x_i}, x_dw_{x_ix_j}$ extend continuously from $(0,T)\times\HH$ to $(0,T]\times\overline{\HH}$ because these continuity properties are true of $u$ by hypothesis \eqref{eq:CondUMaxPrinc1} (and trivially true for $h$) and thus also true for $w$.

\begin{claim}
\label{claim:AuxiliaryMaxPrincipleBall}
There is a constant, $R_0=R_0(\delta) > 0$, such that
\begin{equation}
\label{eq:Adaugare4}
w \leq 0 \quad\hbox{on  } [0,T]\times \bar B_R, \quad \forall\, R \geq R_0(\delta).
\end{equation}
\end{claim}

\begin{proof}
Since $w \in C([0,T]\times \bar B_R)$, the function $w$ attains its maximum at some point $P \in [0,T]\times \bar B_R$. If $P \in (0,T]\times B_R$, then
$$
w_t(P) \geq 0, \quad w_{x_i}(P) = 0, \quad (w_{x_ix_j}(P)) \leq 0.
$$
Therefore, noting that $(a^{ij}(P))\geq 0$ by hypothesis,
\begin{align*}
-\left(L+\lambda\right)w(P) &= \sum_{i,j=1}^d x_da^{ij}(P)w_{x_ix_j}(P) + \sum_{i=1}^d b^i(P)w_{x_i}(P) + (c(P)-\lambda)w(P) - w_t(P)
\\
&\leq (c(P)-\lambda)w(P).
\end{align*}
If $P \in (0,T]\times (\bar B_R\cap\{x_d=0\})$, then
$$
w_t(P) \geq 0, \quad w_{x_d}(P)\leq 0, \quad w_{x_i}(P) = 0\quad (i\neq d),\quad x_dw_{x_ix_j}(P) = 0,
$$
where we use the fact that $u$, and thus $w$, obey \eqref{eq:CondUMaxPrinc1} and \eqref{eq:CondUMaxPrinc2}. Therefore,
\begin{align*}
-\left(L+\lambda\right)w(P) &= \sum_{i,j=1}^d x_da^{ij}(P)w_{x_ix_j}(P) + \sum_{i=1}^d b^i(P)w_{x_i}(P) + c(P)w(P) - w_t(P)
\\
&\leq b^d(P)w_{x_d}(P) + (c(P)-\lambda)w(P)
\\
&\leq (c(P)-\lambda)w(P) \quad\hbox{(by hypothesis that $b^d\geq 0$ on $\{x_d=0\}$).}
\end{align*}
Hence, for $P \in (0,T]\times B_R$ or $(0,T]\times (\bar B_R\cap\{x_d=0\})$, we obtain
$$
-(c(P)-\lambda)w(P) \leq Lw(P).
$$
But $Lw(P) \leq 0$ by \eqref{eq:wRSubsolution} and therefore $w(P) \leq 0$ since $c\leq K$ by \eqref{eq:ZerothOrderTermUpperBound} and $\lambda \geq 3K$ by \eqref{eq:ChoiceLambda} .

Now suppose $P$ lies in one of the remaining two components of the boundary of $(0,T)\times B_R$,
$$
\sB^0_R := \{0\}\times \bar B_R \quad\hbox{or}\quad \sB^1_R := (0,T] \times \left(\{x_d>0\}\cap\partial B_R\right).
$$
The definition \eqref{eq:DefinitionH} of $h$, definition \eqref{eq:DefnwR} of $w$, and \eqref{eq:NonPositiveInitialConditionTransformed} yield
\begin{equation}
\label{eq:NonPositiveInitialConditionwR}
w(0,\cdot) \leq 0 \hbox{  on  } \bar B_R, \quad \forall\, R>0,
\end{equation}
and thus, $w(P) \leq 0$ if $P \in \sB^0_R$, for $R>0$. If $P\in \sB^1_R$, then $|x|=R$ and we see that \eqref{eq:NonNegMaximum}, \eqref{eq:DefinitionH}, and \eqref{eq:DefnwR} give
\begin{align*}
w(P) &= \tilde u(P) - \delta mh(P)
\\
&\leq m - \delta m (1+R^2)
\\
&= m(1-\delta(1+R^2)).
\end{align*}
But $1-\delta(1+R^2) \leq 0$ provided $R\geq R_0(\delta) := (\delta^{-1}-1)^{1/2}>0$ and so $w(P) \leq 0$ for all $R\geq R_0(\delta)$. This completes the proof of Claim \ref{claim:AuxiliaryMaxPrincipleBall}.
\end{proof}

By \eqref{eq:Adaugare4}, we see that
$$
w = \tilde u - \delta mh \leq 0 \quad\hbox{on } \overline{\HH}_T,
$$
for all $\delta \in (0,1)$ and thus, letting $\delta\downarrow 0$, we obtain \eqref{eq:SubsolutionUpperBound}.
\end{proof}

Lemma \ref{lem:MaximumPrinciple} immediately leads to the following comparison principle.

\begin{cor}[Comparison principle]
\label{cor:Comparison}
Assume that the coefficients of $L$ in \eqref{eq:Generator} obey the hypotheses of Lemma \ref{lem:MaximumPrinciple}. If $u, v \in C^{1,2}(\HH_T)\cap C(\overline{\HH}_T)$ obey \eqref{eq:CondUMaxPrinc1}, \eqref{eq:CondUMaxPrinc2}, and
\begin{align}
\label{eq:RelativeSubsolution}
Lu &\leq Lv \quad\hbox{on } (0,T)\times\HH,
\\
\label{eq:RelativeNonPositiveInitialCondition}
u(0,\cdot) &\leq v(0,\cdot) \quad\hbox{on } \overline{\HH},
\end{align}
then
\begin{equation}
\label{eq:RelativeSubsolutionUpperBound}
u \leq v \quad\hbox{on } [0,T]\times\overline{\HH}.
\end{equation}
\end{cor}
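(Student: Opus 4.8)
The plan is to reduce the comparison principle directly to the maximum principle, Lemma \ref{lem:MaximumPrinciple}, by exploiting the linearity of $L$. First I would set $w := u - v$. Since $u, v \in C^{1,2}(\HH_T) \cap C(\overline{\HH}_T)$, so is $w$; and because the conditions \eqref{eq:CondUMaxPrinc1} and \eqref{eq:CondUMaxPrinc2} are linear in the unknown function, $w$ inherits them, namely $w_t, w_{x_i}, x_d w_{x_i x_j} \in C_{\loc}((0,T]\times\overline{\HH})$ for $1 \leq i,j \leq d$, and $x_d w_{x_i x_j} = 0$ on $(0,T]\times\partial\HH$ for $1 \leq i,j \leq d$.

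Next, because the operator $L$ defined in \eqref{eq:Generator} acts linearly on its argument, we have $Lw = Lu - Lv$, so hypothesis \eqref{eq:RelativeSubsolution} yields $Lw \leq 0$ on $(0,T)\times\HH$, which is exactly \eqref{eq:Subsolution} for $w$. Similarly \eqref{eq:RelativeNonPositiveInitialCondition} gives $w(0,\cdot) = u(0,\cdot) - v(0,\cdot) \leq 0$ on $\overline{\HH}$, which is \eqref{eq:NonPositiveInitialCondition} for $w$. The structural requirements on the coefficients $(a,b,c)$ needed to apply Lemma \ref{lem:MaximumPrinciple} — namely $(a^{ij})$ non-negative definite, $b^d \geq 0$ on $\{x_d=0\}$, $c$ satisfying \eqref{eq:ZerothOrderTermUpperBound}, and the growth bound \eqref{eq:CoeffMaxPrincTraceGrowth} — are assumed outright in the statement of the corollary, so nothing further is needed there.

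Then I would simply invoke Lemma \ref{lem:MaximumPrinciple} with $u$ replaced by $w$ to conclude $w \leq 0$ on $[0,T]\times\overline{\HH}$, that is, $u \leq v$ on $[0,T]\times\overline{\HH}$, which is the asserted inequality \eqref{eq:RelativeSubsolutionUpperBound}.

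There is essentially no obstacle in this argument: the only points meriting a word of care are the observations that $L$ is genuinely linear (so that the subsolution inequality subtracts correctly) and that the auxiliary regularity and boundary-vanishing conditions \eqref{eq:CondUMaxPrinc1}–\eqref{eq:CondUMaxPrinc2} are themselves linear in the function and hence stable under taking differences. Both are immediate from the definitions, so the proof is a one-line deduction from Lemma \ref{lem:MaximumPrinciple}.
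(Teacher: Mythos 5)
Your argument is correct and is exactly the intended one: the paper does not even write out a proof, simply noting that Lemma \ref{lem:MaximumPrinciple} ``immediately leads to'' the comparison principle, which is precisely the reduction via $w = u - v$ and linearity of $L$ that you carry out. Nothing to add.
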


Note that if \eqref{eq:RelativeSubsolution} and \eqref{eq:RelativeNonPositiveInitialCondition} are strengthened to
\begin{equation}
\label{eq:MaxPrinc1}
|Lu| \leq Lv \quad\hbox{on }(0,T)\times\HH \quad\hbox{and}\quad  |u(0,\cdot)| \leq v(0,\cdot) \quad\hbox{on }\overline{\HH},
\end{equation}
then Corollary \ref{cor:Comparison} yields
\begin{equation}
\label{eq:MaxPrinc2}
|u| \leq v \quad\hbox{on } [0,T]\times\overline{\HH}.
\end{equation}
We can now turn our attention to the

\begin{prop}[Application of the maximum principle]
\label{prop:MaximumPrinciple}
Assume that the coefficients of $L$ in \eqref{eq:Generator} obey the hypotheses of Lemma \ref{lem:MaximumPrinciple}, except that \eqref{eq:CoeffMaxPrincTraceGrowth} is replaced by the stronger condition
\begin{equation}
\label{eq:CoeffMaxPrincGrowth}
\sum_{i,j=1}^d x_d |a^{ij}(t,x)| + |x\cdot b(t,x)| \leq K(1+|x|^2), \quad \forall\, (t,x)\in\overline\HH_T.
\end{equation}
Suppose that $u\in C^{1,2}(\HH_T)\cap C(\overline{\HH}_T)$ solves \eqref{eq:Problem} and obeys \eqref{eq:CondUMaxPrinc1} and \eqref{eq:CondUMaxPrinc2}.
\begin{itemize}
\item[(a)] If $f\in C(\overline{\HH}_T)$ and $g \in C(\overline{\HH})$, then
           \begin{equation}
           \label{eq:MaximumPrinciple1}
           \begin{aligned}
           \|u\|_{C(\overline{\HH}_T)} \leq  e^{KT}\left( T \|f\|_{C(\overline{\HH}_T)} + \|g\|_{C(\overline{\HH})} \right).
           \end{aligned}
           \end{equation}
\item[(b)] If $q > 0$, $f\in \sC^0_q(\overline{\HH}_T)$, and $g\in \sC^0_q(\overline{\HH})$, then
           \begin{equation}
           \label{eq:MaximumPrinciple3}
           \begin{aligned}
           \|u\|_{\sC^{0}_q(\overline{\HH}_T)} \leq  e^{(1+q(q+4)K)T}\left(\|f\|_{\sC^0_q(\overline{\HH}_T)} + \|g\|_{\sC^0_q(\overline{\HH})}\right).
           \end{aligned}
           \end{equation}
\end{itemize}
\end{prop}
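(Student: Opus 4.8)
\medskip

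The plan is to deduce both parts from the comparison principle (Corollary \ref{cor:Comparison}) by exhibiting, in each case, an explicit \emph{supersolution} $v$ that dominates $u$ at the initial time and whose $L$-image dominates $|f|$. For part (a), I would take the classical choice
$$
v(t,x) := e^{\lambda t}\left(t\,\|f\|_{C(\overline{\HH}_T)} + \|g\|_{C(\overline{\HH})}\right),
$$
with $\lambda := K$. Since $c\le K$ by \eqref{eq:ZerothOrderTermUpperBound}, a direct computation gives, on $(0,T)\times\HH$,
$$
Lv = -v_t + cv \le -\lambda v + Kv = 0 \le -|f| + \text{(the constant part times }\lambda e^{\lambda t}) ,
$$
so in fact one checks $|Lu| = |f| \le Lv$ and $|u(0,\cdot)| = |g| \le v(0,\cdot)$; note $v$ is a bounded $C^{1,2}$ function obeying \eqref{eq:CondUMaxPrinc1}--\eqref{eq:CondUMaxPrinc2} trivially (its second spatial derivatives vanish). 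Then \eqref{eq:MaxPrinc1}--\eqref{eq:MaxPrinc2} yield $|u|\le v$ on $[0,T]\times\overline{\HH}$, and evaluating the supremum of $v$ over $[0,T]$ gives \eqref{eq:MaximumPrinciple1}. The only care needed is to redo the constant bookkeeping so that the factor $e^{\lambda t}$ is genuinely absorbed by $Lv\le 0$; the clean way is to set $\lambda = K$ and observe $-v_t + cv = -\lambda v - \lambda e^{\lambda t}\|f\|_{C(\overline{\HH}_T)} + cv \le -e^{\lambda t}\|f\|_{C(\overline{\HH}_T)}$ when $\lambda\ge 1$, so one takes $\lambda=\max\{K,1\}$ and notes $e^{\lambda T}\le e^{KT}$ is what the statement claims only when $K\ge 1$ — otherwise one simply uses $e^{(K+1)T}$; I would match the paper's stated constant by taking $\lambda$ slightly larger than needed and absorbing into $K$.

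\medskip

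For part (b), the idea is to conjugate by the weight. Set $w := (1+|x|^2)^{q/2} u$ — or more precisely, rather than conjugating the operator, directly construct the supersolution
$$
v(t,x) := e^{\mu t}\left(1+|x|^2\right)^{-q/2}\left(\|f\|_{\sC^0_q(\overline{\HH}_T)} + \|g\|_{\sC^0_q(\overline{\HH})}\right),
$$
where $\mu>0$ is to be chosen, and again apply Corollary \ref{cor:Comparison} in the strengthened form \eqref{eq:MaxPrinc1}--\eqref{eq:MaxPrinc2}. Since $|f(t,x)| \le (1+|x|)^{-q}\|f\|_{\sC^0_q(\overline{\HH}_T)}$ and $(1+|x|)^{-q}$ is comparable to $(1+|x|^2)^{-q/2}$ up to a dimensional constant, it suffices to show $Lv \ge |f|$, i.e. $Lv \ge c_d (1+|x|^2)^{-q/2}\|f\|_{\sC^0_q}$ for a suitable constant, and $v(0,\cdot)\ge |g|$, which is immediate. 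Writing $\phi(x) := (1+|x|^2)^{-q/2}$, one computes
$$
\phi_{x_i} = -q\,x_i (1+|x|^2)^{-q/2-1},\qquad
\phi_{x_ix_j} = -q\,\delta_{ij}(1+|x|^2)^{-q/2-1} + q(q+2)x_ix_j(1+|x|^2)^{-q/2-2},
$$
so that
$$
-Lv/e^{\mu t} = \sum_{i,j} x_d a^{ij}\phi_{x_ix_j} + \sum_i b^i \phi_{x_i} + (c-\mu)\phi ,
$$
times the constant $\|f\|_{\sC^0_q}+\|g\|_{\sC^0_q}$. Using the growth bound \eqref{eq:CoeffMaxPrincGrowth}, the first two sums are bounded in absolute value by a constant times $q(q+2)\cdot K(1+|x|^2)\cdot(1+|x|^2)^{-q/2-1} = C\,q(q+2)K\,\phi$; hence choosing $\mu \ge 1 + q(q+4)K$ (the ``$+4$'' and the ``$+1$'' absorbing the dimensional constant and the $c\le K$ term) forces $-Lv \le -\phi\, e^{\mu t}(\|f\|_{\sC^0_q}+\|g\|_{\sC^0_q}) \le 0$ and in fact $Lv \ge e^{\mu t}\phi(\|f\|_{\sC^0_q}+\|g\|_{\sC^0_q}) \ge |f|$ on $(0,T)\times\HH$ after adjusting $C$. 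One must also verify that $v$ is an admissible test function for the maximum principle: $v$ is smooth and bounded on $\overline{\HH}_T$, its derivatives extend continuously (so \eqref{eq:CondUMaxPrinc1} holds), and $x_d v_{x_ix_j}\to 0$ as $x_d\to 0$ since $v_{x_ix_j}$ is bounded (so \eqref{eq:CondUMaxPrinc2} holds). Then \eqref{eq:MaxPrinc2} gives $|u(t,x)| \le e^{\mu t}(1+|x|^2)^{-q/2}(\|f\|_{\sC^0_q}+\|g\|_{\sC^0_q})$, and multiplying by $(1+|x|)^q$ and taking the supremum yields \eqref{eq:MaximumPrinciple3}, after absorbing the dimensional constant $(1+|x|)^q/(1+|x|^2)^{q/2}\le 1$.

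\medskip

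The main obstacle I anticipate is purely bookkeeping: getting the precise exponent $1+q(q+4)K$ in \eqref{eq:MaximumPrinciple3} rather than merely \emph{some} constant of the form $C(d,q,K)$. This requires being careful that (i) $\phi_{x_ix_j}$ contributes a factor $q(q+2)$ at worst, and when paired with the quadratic growth of $\sum x_d a^{ij}$ and $x\cdot b$ it produces exactly a multiple of $\phi$ with coefficient $\lesssim q(q+2)K$, while the $b^i\phi_{x_i}$ term contributes $\lesssim qK\,\phi$; summing, one gets a bound $\le q(q+3)K\,\phi$ or so, and the remaining ``$+K$'' from $c$ and the ``$+1$'' slack give the stated $\mu$. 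I would present this computation explicitly to pin down the constant. The other minor point is justifying the use of the \emph{strengthened} comparison principle \eqref{eq:MaxPrinc1}--\eqref{eq:MaxPrinc2}, which is already recorded in the remark following Corollary \ref{cor:Comparison} and requires $Lv \ge |Lu| = |f|$ (not merely $Lv \ge Lu$); this is why we need the sharper hypothesis \eqref{eq:CoeffMaxPrincGrowth} controlling $\sum x_d|a^{ij}|$ and $|x\cdot b|$ separately, as the estimates on the sums $\sum x_d a^{ij}\phi_{x_ix_j}$ and $\sum b^i\phi_{x_i}$ genuinely use absolute values of the coefficients.
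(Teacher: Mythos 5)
Your overall strategy matches the paper's exactly: construct explicit supersolutions $v_1(t) = e^{Kt}\bigl(t\|f\|_{C(\overline{\HH}_T)} + \|g\|_{C(\overline{\HH})}\bigr)$ and $v_2(t,x) = e^{\lambda t}(1+|x|^2)^{-q/2}\bigl(\|f\|_{\sC^0_q} + \|g\|_{\sC^0_q}\bigr)$, verify that $|Lu| \le Lv_k$ and $|u(0,\cdot)| \le v_k(0,\cdot)$, and apply the comparison principle in the strengthened form \eqref{eq:MaxPrinc1}--\eqref{eq:MaxPrinc2}. The one concrete slip is in your part~(a) bookkeeping: you wrote $-v_t = -\lambda v - \lambda e^{\lambda t}\|f\|$ when the correct expression is $-v_t = -\lambda v - e^{\lambda t}\|f\|$ (the extra $\lambda$ on the second term is spurious), and this led you to the unnecessary worry that $\lambda \ge 1$ might be required and that the stated $e^{KT}$ could fail for $K<1$. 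In fact $\lambda=K$ works for all $K>0$: with $v_1$ depending only on $t$, $Lv_1 = v_{1,t} - cv_1 = (K-c)v_1 + e^{Kt}\|f\|_{C(\overline{\HH}_T)}$, and since $c\le K$ forces $(K-c)v_1 \ge 0$ while $e^{Kt}\ge 1$ gives $e^{Kt}\|f\|\ge\|f\|$, one gets $Lv_1 \ge \|f\| \ge |Lu|$ directly, with no restriction on $K$. Your part~(b) reasoning is sound and your anticipated bookkeeping is the right thing to carry out; the paper's precise count uses the growth bound \eqref{eq:CoeffMaxPrincGrowth} to estimate each of the three terms $q\sum b^ix_i/(1+|x|^2)$, $q(q+2)\sum a^{ij}x_ix_jx_d/(1+|x|^2)^2$, and $q\sum a^{ii}x_d/(1+|x|^2)$ by $qK$, $q(q+2)K$, and $qK$ respectively, summing to $q(q+4)K$ before adding the contributions of $c$ and the unit slack.
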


\begin{proof}
To obtain \eqref{eq:MaximumPrinciple1} and \eqref{eq:MaximumPrinciple3}, we make specific choices of the function $v$ in Corollary \ref{cor:Comparison}. To establish \eqref{eq:MaximumPrinciple1}, we choose
\[
v_1(t,x) := e^{K t}\left(t \|f\|_{C(\overline{\HH}_T)} + \|g\|_{C(\overline{\HH})}\right), \quad \forall\, (t,x)\in\overline{\HH}_T,
\]
Direct calculation gives
\begin{align*}
Lv_1 &= (-c+K) v_1 +  e^{K t} \|f\|_{C(\overline{\HH}_T)}
\\
&\quad \geq \|f\|_{C(\overline{\HH}_T)} \quad\hbox{on } (0,T)\times\HH \quad\hbox{(by \eqref{eq:ZerothOrderTermUpperBound})}.
\end{align*}
Therefore, since $Lu=f$ on $(0,T)\times\HH$ by \eqref{eq:Problem},
\[
|Lu| \leq L v_1 \quad \hbox{on } (0,T)\times\HH,
\]
and so $v_1$ satisfies condition \eqref{eq:MaxPrinc1}. Thus, by \eqref{eq:MaxPrinc2}, we obtain \eqref{eq:MaximumPrinciple1}.

Next, we prove \eqref{eq:MaximumPrinciple3}. For this purpose, we choose
\begin{equation}
\label{def:DefinitionV2}
v_2(t,x) :=  e^{\lambda t}\frac{\left(\|f\|_{\sC^0_{q}(\overline{\HH}_T)} + \|g\|_{\sC^0_{q}(\overline{\HH})}\right)}{(1+|x|^2)^{q/2}},\quad \forall\, (t,x)\in\overline{\HH}_T,
\end{equation}
where $\lambda>0$ will be suitably chosen below. First, we verify that $v_2$ satisfies the first inequality in \eqref{eq:MaxPrinc1}. Direct calculation gives
\begin{equation*}
\begin{aligned}
L v_2 &=
v_2\left[-c(t,x)+\lambda + q \sum_{i=1}^d\frac{b^i(t,x)x_i}{1+|x|^2} -q(q+2) \sum_{i,j=1}^d\frac{a^{ij}(t,x) x_i x_j x_d}{(1+|x|^2)^2}
+q \sum_{i=1}^d\frac{a^{ii}(t,x) x_d}{1+|x|^2} \right].
\end{aligned}
\end{equation*}
Conditions \eqref{eq:CoeffMaxPrincGrowth} and \eqref{eq:ZerothOrderTermUpperBound}, imply that
\begin{align*}
L v_2 \geq v_2\left(K+\lambda - qK - q(q+2)K - qK \right).
\end{align*}
By choosing
\[
\lambda = 1+q(q+4) K > 0,
\]
we obtain
\[
L v_2 \geq v_2 \geq \frac{\|f\|_{\sC^0_{q}(\overline{\HH}_T)}}{{(1+|x|^2)^{q/2}}} \quad\hbox{on } (0,T)\times\HH.
\]
By the definition \eqref{eq:DefinitionX_0_q_T} of the  norm $\|\cdot\|_{\sC^0_q(\overline{\HH}_T)}$,  we have
\[
\left(1+|x|^2\right)^{q/2} |f(t,x)| \leq \|f\|_{\sC^0_{q}(\overline{\HH}_T)}, \quad\forall\, (t,x) \in [0,T]\times\overline{\HH},
\]
and so, using $Lu=f$ on $\HH_T$ by \eqref{eq:Problem}, we obtain the first inequality in \eqref{eq:MaxPrinc1}, that is,
\begin{equation}
\label{eq:MaxPrincADD1}
|Lu| \leq Lv_2 \quad\hbox{on } (0,T)\times\HH.
\end{equation}
Similarly, by the definition \eqref{eq:DefinitionX_0_q} of the norm $\|\cdot\|_{\sC^0_q(\overline{\HH})}$, we have
\[
\left(1+|x|^2\right)^{q/2} |g(x)| \leq \|g\|_{\sC^0_{q}(\overline{\HH})}, \quad\forall\, x \in  \overline{\HH}.
\]
Since $u(0, \cdot)=g$ on $\overline{\HH}$, it is immediate that
\begin{equation}
\label{eq:MaxPrincADD2}
|u(0,\cdot)|\leq v_2(0,\cdot) \quad\hbox{on } \overline{\HH}.
\end{equation}
Therefore, by \eqref{eq:MaxPrincADD1} and \eqref{eq:MaxPrincADD2}, $v_2$ obeys conditions \eqref{eq:MaxPrinc1}, and so we obtain \eqref{eq:MaximumPrinciple3} from the definition \eqref{def:DefinitionV2} of $v_2$.
\end{proof}

\subsection{Local a priori boundary estimates}
\label{subsec:LocalAPrioriBoundaryEstimates}
We have the following analogue of \cite[Theorem 8.11.1]{Krylov_LecturesHolder}.

\begin{thm}[A priori boundary estimates]
\label{thm:AprioriBoundaryEst}
There is constant a $R^*=R^*(d,\alpha,K,\delta, \nu)$, such that for any $0<R\leq R^*$, we can find a positive constant $C=C(d,\alpha,K,\delta,\nu,R)$, such that for any  $x^0 \in \partial \HH$, $T\in (0, R]$ and $u \in C^{2+\alpha}_s(\bar Q_{3R/2,T}(x^0))$ that satisfies
\begin{equation}
\label{eq:AprioriBoundaryPDE}
\begin{aligned}
\begin{cases}
         L u = f  & \mbox{ on } Q_{3R/2,T}(x^0), \\
         u(0,\cdot)=g  & \mbox{ on } \bar B_{3R/2}(x^0),
\end{cases}
\end{aligned}
\end{equation}
the following estimate holds
\begin{equation}
\label{eq:AprioriBoundaryEst}
\begin{aligned}
\|u\|_{C^{2+\alpha}_s(\bar Q_{R,T}(x^0))} \leq C \left(\|f\|_{C^{\alpha}_s(\bar Q_{3R/2,T}(x^0))} + \|g\|_{C^{2+\alpha}_s(\bar B_{3R/2}(x^0))} + \|u\|_{C(\bar Q_{3R/2,T}(x^0))}\right).
\end{aligned}
\end{equation}
\end{thm}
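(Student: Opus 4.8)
The plan is to adapt the freezing-coefficients argument of \cite[Theorem~8.11.1]{Krylov_LecturesHolder}: one compares $L$ on the small cylinder $\bar Q_{3R/2,T}(x^0)$ with a constant-coefficient \emph{model} degenerate operator, uses the cycloidal distance to control the size of the resulting perturbation, and absorbs the lower-order and commutator terms by the interpolation inequalities of Lemma~\ref{lem:InterpolationIneqS}. Translating in the $x'$-variables, we may take $x^0=0$. The building block is the global Schauder estimate for the constant-coefficient operator $L_0 w := -w_t + x_d\sum_{i,j}a_0^{ij}w_{x_ix_j} + \sum_i b_0^i w_{x_i} + c_0 w$, where $(a_0^{ij})$ is constant, symmetric, $\geq\delta I$, and $b_0^d\geq\nu$, $c_0\leq K$: there is $C_0=C_0(d,\alpha,K,\delta,\nu)$, bounded uniformly for $T\in(0,R^*]$, such that every $w\in C^{2+\alpha}_s(\overline{\HH}_T)$ with compact support in $[0,\infty)\times\bar B_{3R/2}(0)$ satisfies $\|w\|_{C^{2+\alpha}_s(\overline{\HH}_T)}\leq C_0(\|L_0 w\|_{C^{\alpha}_s(\overline{\HH}_T)}+\|w(0,\cdot)\|_{C^{2+\alpha}_s(\overline{\HH})})$. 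This is the content of Appendix~\ref{app:ExistenceUniquenessDegenParabolicPDEConstantCoefficients}, which rests on \cite[Theorem~I.1.1]{DaskalHamilton1998} and \cite{Koch}. We then freeze the coefficients of $L$ at the corner point by setting $a_0^{ij}:=a^{ij}(0,0)$, $b_0^i:=b^i(0,0)$, $c_0:=c(0,0)$; by \eqref{eq:NonDegeneracyNearBoundary}, \eqref{eq:BoundednessNearBoundary}, \eqref{eq:CoeffBD} and \eqref{eq:ZerothOrderTermUpperBound} (and since $0\in\partial\HH$), $L_0$ is an admissible model operator.

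The key geometric observation is that, because $0\in\partial\HH$ and $T\leq R\leq R^*$, every $P\in[0,T]\times\bar B_{3R/2}(0)$ obeys $s(P,(0,0))\leq C_1\sqrt R$, and any two points of that set satisfy $|x^1-x^2|\leq C_1\sqrt R\,s(P_1,P_2)$, both elementary consequences of \eqref{eq:CycloidalMetric}. Shrinking $R^*$ so that $C_1\sqrt{R^*}\leq 1$ and $3R^*/2\leq 2$, the local cycloidal H\"older bounds \eqref{eq:LocalHolderS} then give
\[
\|a^{ij}-a_0^{ij}\|_{C(\bar Q_{3R/2,T})}+\|b^i-b_0^i\|_{C(\bar Q_{3R/2,T})}+\|c-c_0\|_{C(\bar Q_{3R/2,T})}\leq CKR^{\alpha/2},
\]
while the cycloidal H\"older seminorms of these differences over $\bar Q_{3R/2,T}$ stay bounded by $CK$; the second estimate above also shows that a cutoff $\zeta=\zeta(x)$ with $|D^k\zeta|\leq C(\rho'-\rho)^{-k}$ has $C^{\alpha}_s$-norms controlled by $C(R,\rho'-\rho)$. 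For $R\leq\rho<\rho'\leq 3R/2$ pick such a $\zeta$ with $\zeta\equiv 1$ on $B_\rho(0)$ and $\supp\zeta\subset B_{\rho'}(0)$, and set $v:=\zeta u$, which has compact support in $[0,\infty)\times\bar B_{3R/2}(0)$, with $v(0,\cdot)=\zeta g$ and
\[
L_0 v=\zeta f-\zeta(L-L_0)u+E_\zeta,\qquad E_\zeta:=x_d a_0^{ij}\zeta_{x_ix_j}u+2x_d a_0^{ij}\zeta_{x_i}u_{x_j}+b_0^i\zeta_{x_i}u
\]
(no $\zeta_t$-term, since $\zeta$ is independent of $t$).

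Applying the model estimate to $v$ and expanding, the crucial point is that after writing $(L-L_0)u=(a^{ij}-a_0^{ij})(x_d u_{x_ix_j})+(b^i-b_0^i)u_{x_i}+(c-c_0)u$ and always pairing the degeneracy factor $x_d$ with a derivative of $u$ rather than with a coefficient, every error term is bounded by $C$ times one of the four quantities $\|x_d u_{x_ix_j}\|_{C(\,\cdot\,)}$, $\|x_d u_{x_i}\|_{C^{\alpha}_s(\,\cdot\,)}$, $\|u_{x_i}\|_{C(\,\cdot\,)}$, $\|u\|_{C^{\alpha}_s(\,\cdot\,)}$ --- precisely those for which Lemma~\ref{lem:InterpolationIneqS} supplies interpolation bounds $\leq\eps\|\cdot\|_{C^{2+\alpha}_s}+C\eps^{-m}\|\cdot\|_{C}$ for compactly supported functions (one applies these, after a further cutoff equal to $1$ on $\bar B_{\rho'}(0)$, to the un-truncated $u$ occurring in $E_\zeta$); this is exactly why the weighted seminorm $\|x_d u_{x_i}\|_{C^{\alpha}_s}$, and not the unweighted $[u_{x_i}]_{C^{\alpha}_s}$, is built into $C^{2+\alpha}_s$. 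The only non-interpolatable quantity arising, $[u_{x_i}]_{C^{\alpha}_s}$ (cf.\ the Remark after Lemma~\ref{lem:InterpolationIneqS}), occurs only multiplied by the small coefficient $CKR^{\alpha/2}$ from the oscillation bound, hence is absorbed via $[u_{x_i}]_{C^{\alpha}_s}\leq\|u\|_{C^{2+\alpha}_s}$. Using $v=u$ on $\bar Q_{\rho,T}(0)$, one arrives at
\[
\|u\|_{C^{2+\alpha}_s(\bar Q_{\rho,T})}\leq\theta\,\|u\|_{C^{2+\alpha}_s(\bar Q_{\rho',T})}+\frac{C(R)}{(\rho'-\rho)^{N}}\bigl(\|f\|_{C^{\alpha}_s(\bar Q_{3R/2,T})}+\|g\|_{C^{2+\alpha}_s(\bar B_{3R/2})}+\|u\|_{C(\bar Q_{3R/2,T})}\bigr)
\]
for all $R\leq\rho<\rho'\leq 3R/2$, where the interpolation parameter is chosen so that the coefficient $\theta$ is a \emph{fixed} number $<1$ (possible once $R^*$ is small enough that the $C_0\cdot CKR^{\alpha/2}$ contribution is $<\tfrac14$), at the price of the $(\rho'-\rho)^{-N}$ blow-up in front of the data. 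A standard iteration lemma over $\rho\in[R,3R/2]$, as in the proof of \cite[Theorem~8.11.1]{Krylov_LecturesHolder}, then removes the first term on the right (using $\|u\|_{C^{2+\alpha}_s(\bar Q_{3R/2,T})}<\infty$ by hypothesis) and yields \eqref{eq:AprioriBoundaryEst}.

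The main obstacle is the bookkeeping behind the previous paragraph: one must check that, after all commutators with $\zeta$ and all frozen-coefficient differences are expanded, no top-order quantity of $u$ survives except the four interpolatable ones (and $[u_{x_i}]_{C^{\alpha}_s}$ always with the small factor $KR^{\alpha/2}$), and that $\eps$ and $R^*$ can be chosen, depending only on $d,\alpha,K,\delta,\nu$, so that the constant $\theta$ in the iteration inequality is $<1$ uniformly in $\rho,\rho'$; the translation, the choice of cutoffs, and the final iteration lemma are then routine.
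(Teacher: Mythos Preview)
Your approach is correct and is essentially the paper's: freeze the coefficients of $L$ at $(0,x^0)$, apply the constant-coefficient Schauder estimate of Proposition~\ref{prop:ConstantCoeff} to a cutoff of $u$, and absorb the perturbation $(L-L_0)$ and commutator terms using the smallness factor $R^{\alpha/2}$ together with the interpolation inequalities of Lemma~\ref{lem:InterpolationIneqS}. The only cosmetic difference is in the absorption device: in place of your two-radius inequality and Giaquinta-type iteration lemma, the paper uses a sequence of nested cutoffs $\varphi_n$ with radii $R_n=R\sum_{k\le n}3^{-k}$, sets $\alpha_n:=\|u\varphi_n\|_{C^{2+\alpha}_s}$, derives $\delta^n\alpha_n\le \delta^{n+1}\alpha_{n+1}+C\,2^{-n}(\text{data})$, and sums over $n$.
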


\begin{proof}
The proof is a blend of the localizing technique used in \cite[Theorem 8.11.1]{Krylov_LecturesHolder} and the method of freezing the coefficients. Fix $R>0$ and $T\in(0,R]$. Let $\varphi:\RR\rightarrow [0,1]$ be a smooth function such that $\varphi(t)=0$ for $t<0$, and $\varphi(t)=1$ for $t>1$. Let
\[
R_n=R\sum_{k=0}^{n} \frac{1}{3^k},
\]
and consider the sequence of smooth cutoff functions $\{\varphi_n\}_{n\geq 1}\subset C^\infty(\bar\RR^d)$ defined by
\[
\varphi_n(x):=\varphi\left(\frac{R_{n+1}-|x|}{R_{n+1}-R_n}\right), \quad \forall\, x\in\overline{\HH},
\]
so that $0\leq \varphi_n\leq 1$ and $\varphi_n|_{B_{R_n}} \equiv 1$ and $\varphi_n|_{B^c_{R_{n+1}}}\equiv 0$, where $B^c_{R_{n+1}}$ denotes the complement of $B_{R_{n+1}}$ in $\RR^d$. Also, by direct calculation, we can find a positive constant $c$, independent of $n$ and $R$, such that
\begin{equation}
\label{eq:PropCutOffFunction}
\begin{aligned}
\|\varphi_n\|_{C^{\alpha}_s(\overline{\HH})},
\|(\varphi_n)_{x_i}\|_{C^{\alpha}_s(\overline{\HH})},
\|x_d(\varphi_n)_{x_ix_j}\|_{C^{\alpha}_s(\overline{\HH})},
\|(\varphi_n)_{x_ix_j}\|_{C^{\alpha}_s(\overline{\HH})}
\leq c 3^{3n}R^{-3}.
\end{aligned}
\end{equation}
We denote $r:=3^{-3}<1$ and set
\begin{equation}
\label{eq:DefinitionAlpha}
\begin{aligned}
\alpha_n &:= \|u\varphi_n\|_{C^{2+\alpha}_s(\overline{\HH}_T)}.
\end{aligned}
\end{equation}
We denote by $L_0$ the operator with constant coefficients obtained by freezing the coefficients of $L$ at $(0,x^0)$. Proposition \ref{prop:ConstantCoeff} shows there exists a positive constant $C$, depending only on $K$, $\delta$ and $\nu$, such that
\begin{equation}
\label{eq:ADDLocalAprioriBoundaryEst}
\alpha_n
=\|u\varphi_n\|_{C^{2+\alpha}_s(\overline{\HH}_T)}
\leq C \left( \|L_0(u\varphi_n)\|_{C^{\alpha}_s(\overline{\HH}_T)} + \|g\varphi_n\|_{C^{2+\alpha}_s(\overline{\HH})}\right),
\end{equation}
and so
\begin{equation}
\label{eq:ConstantCoeff1}
\begin{aligned}
\alpha_n
&\leq C \left( \|L(u\varphi_n)\|_{C^{\alpha}_s(\overline{\HH}_T)}
+\|(L-L_0)(u\varphi_n)\|_{C^{\alpha}_s(\overline{\HH}_T)}
+ \|g\varphi_n\|_{C^{2+\alpha}_s(\overline{\HH})}\right).
\end{aligned}
\end{equation}
We have $L(u\varphi_n) = \varphi_n Lu - [L,\varphi_n]u$, where, by direct calculation,
\begin{equation}
\label{eq:ConstantCoeff2}
\begin{aligned}
\left[L,\varphi_n\right]u &= \sum_{i,j=1}^d  2x_d a^{ij}(t,x) u_{x_i} (\varphi_n)_{x_j} + \sum_{i=1}^d b^i(t,x) u (\varphi_n)_{x_i} + \sum_{i,j=1}^dx_d a^{ij}(t,x) u (\varphi_n)_{x_ix_j}.
\end{aligned}
\end{equation}
By the analogue of the \cite[Inequality (4.7)]{GilbargTrudinger} for standard H\"older norms, we have
\begin{equation*}
\|\varphi_n Lu\|_{C^{\alpha}_s(\overline{\HH}_T)}
\leq c \|Lu\|_{C^{\alpha}_s(\bar Q_{R_{n+1},T})} \|\varphi_n\|_{C^{\alpha}_s(\overline{\HH})},
\end{equation*}
and by \eqref{eq:PropCutOffFunction}, there is a positive constant, $c$, such that
\begin{equation}
\label{eq:ConstantCoeff3}
\begin{aligned}
\|\varphi_n Lu\|_{C^{\alpha}_s(\overline{\HH}_T)}
\leq c r^{-n}R^{-3} \|f\|_{C^{\alpha}_s(\bar Q_{3R/2,T})}.
\end{aligned}
\end{equation}
From properties \eqref{eq:BoundednessNearBoundary} and \eqref{eq:LocalHolderS} of the coefficients $a^{ij}$, $b^i$ and $c$ on $\overline{\HH}_{2,T}$, we can find a positive constant $C$, depending only on $K$ and $d$, such that
\begin{equation}
\label{eq:ConstantCoeff3_2}
\begin{aligned}
\|[L,\varphi_n]u\|_{C^{\alpha}_s(\overline{\HH}_T)}
&\leq C r^{-n}R^{-3} \left(\|x_d (u\varphi_{n+1})_{x_i}\|_{C^{\alpha}_s(\overline{\HH}_T)} + \|u\varphi_{n+1}\|_{C^{\alpha}_s(\overline{\HH}_T)} \right).
\end{aligned}
\end{equation}
The interpolation inequality \eqref{eq:InterpolationIneqS3} in Lemma \ref{lem:InterpolationIneqS} gives us, for any $\eps\in(0,1)$,
\begin{equation}
\label{eq:ConstantCoeff4}
\begin{aligned}
\|x_d (u\varphi_{n+1})_{x_i}\|_{C^{\alpha}_s(\overline{\HH}_T)} + \|u\varphi_{n+1}\|_{C^{\alpha}_s(\overline{\HH}_T)}
\leq \eps \|u\varphi_{n+1}\|_{C^{2+\alpha}_s(\overline{\HH}_T)} + C \eps^{-m} \|u\varphi_{n+1}\|_{C(\overline{\HH}_T)}.
\end{aligned}
\end{equation}
Hence, the preceding inequality together with \eqref{eq:ConstantCoeff3} and \eqref{eq:ConstantCoeff3_2} give us
\begin{equation}
\label{eq:ConstantCoeff5}
\begin{aligned}
\|L(u\varphi_n)\|_{C^{\alpha}_s(\overline{\HH}_T)}
&\leq
C r^{-n}R^{-3} \left(\|f\|_{C^{\alpha}_s(\bar Q_{3R/2,T})}+\eps \|u\varphi_{n+1}\|_{C^{2+\alpha}_s(\overline{\HH}_T)} \right.
\\
&\quad + \left. \eps^{-m} \|u\varphi_{n+1}\|_{C(\overline{\HH}_T)} \right).
\end{aligned}
\end{equation}
Next, we estimate the term $(L-L_0)(u\varphi_n)$ in \eqref{eq:ConstantCoeff1}, that is,
\begin{equation}
\label{eq:EqLMinusL_0}
\begin{aligned}
-(L-L_0)(u\varphi_n)
&=
\sum_{i,j=1}^d x_d \left(a^{ij}(t,x)-a^{ij}(0,x^0)\right)(u\varphi_n)_{x_ix_j} \\
&\quad
 + \sum_{i=1}^d \left(b^i(t,x)-b^i(0,x^0)\right)(u\varphi_n)_{x_i}
 + \left(c(t,x)-c(0,x^0)\right)(u\varphi_n).
\end{aligned}
\end{equation}
We have

\begin{claim}
\label{claim:LocalBoundaryClaim1}
There is a constant $C=C(K, R^*, d, \alpha)$ such that, for any $\eps\in (0,1)$, we have
\begin{equation}
\label{eq:ADDLminusL_0}
\begin{aligned}
\|(L-L_0)(u\varphi_n)\|_{C^{\alpha}_s(\overline{\HH}_T)}
&\leq C\left(R^{\alpha/2} + r^{-n}R^{-3} \eps\right)\|u\varphi_{n+1}\|_{C^{2+\alpha}_s(\overline{\HH}_T)}
\\
&\quad + C r^{-n}R^{-3} \eps^{-m} \|u\varphi_{n+1}\|_{C(\overline{\HH}_T)},
\end{aligned}
\end{equation}
where $m$ is the constant appearing in Lemma \ref{lem:InterpolationIneqS}.
\end{claim}

\begin{proof}[Proof of Claim \ref{claim:LocalBoundaryClaim1}]
From the H\"older continuity \eqref{eq:LocalHolderS} and boundedness \eqref{eq:BoundednessNearBoundary} of the coefficients $a^{ij}$ on $\overline{\HH}_{2,T}$, we can find a positive constant $C$, depending only on $K$ and $d$, such that
\begin{equation}
\label{eq:ConstantCoeff6}
\begin{aligned}
&\|x_d \left(a^{ij}(t,x)-a^{ij}(0,x^0)\right)(u\varphi_n)_{x_ix_j}\|_{C^{\alpha}_s(\overline{\HH}_T)} \\
&\qquad \leq CR^{\alpha/2} \|x_d(u\varphi_n)_{x_ix_j}\|_{C^{\alpha}_s(\overline{\HH}_T)}
 + C \|x_d(u\varphi_n)_{x_ix_j}\|_{C(\overline{\HH}_T)}.
\end{aligned}
\end{equation}
Using the following calculation in the preceding inequality,
\begin{equation*}
\begin{aligned}
\|x_d(u\varphi_n)_{x_ix_j}\|_{C^{\alpha}_s(\overline{\HH}_T)}
&\leq
\|x_du_{x_ix_j}\varphi_n\|_{C^{\alpha}_s(\overline{\HH}_T)}
+ \|x_du_{x_i}(\varphi_n)_{x_j}\|_{C^{\alpha}_s(\overline{\HH}_T)}
+ \|x_d u(\varphi_n)_{x_ix_j}\|_{C^{\alpha}_s(\overline{\HH}_T)}\\
&\leq
[x_d(u\varphi_{n+1})_{x_ix_j}]_{C^{\alpha}_s(\overline{\HH}_T)}
+c r^{-n}R^{-3} \left(\|x_d(u\varphi_{n+1})_{x_ix_j}\|_{C(\overline{\HH}_T)}\right.\\
&\left.\quad+\|x_d(u\varphi_{n+1})_{x_i}\|_{C^{\alpha}_s(\overline{\HH}_T)} + \|x_du\varphi_{n+1}\|_{C^{\alpha}_s(\overline{\HH}_T)}\right),
\end{aligned}
\end{equation*}
together with the interpolation inequality \eqref{eq:InterpolationIneqS4} in Lemma \ref{lem:InterpolationIneqS} applied to $u\varphi_{n+1}$,
\begin{equation*}
\begin{aligned}
&\|x_d(u\varphi_{n+1})_{x_ix_j}\|_{C(\overline{\HH}_T)}+\|x_d (u\varphi_{n+1})_{x_i}\|_{C^{\alpha}_s(\overline{\HH}_T)} + \|u\varphi_{n+1}\|_{C^{\alpha}_s(\overline{\HH}_T)}\\
&\qquad \leq \eps \|u\varphi_{n+1}\|_{C^{2+\alpha}_s(\overline{\HH}_T)} + C \eps^{-m} \|u\varphi_{n+1}\|_{C(\overline{\HH}_T)},
\end{aligned}
\end{equation*}
we obtain from \eqref{eq:ConstantCoeff6} that
\begin{equation*}
\begin{aligned}
&\|x_d \left(a^{ij}(t,x)-a^{ij}(0,x^0)\right)(u\varphi_n)_{x_ix_j}\|_{C^{\alpha}_s(\overline{\HH}_T)} \\
&\quad \leq CR^{\alpha/2} [x_d(u\varphi_{n+1})]_{C^{\alpha}_s(\overline{\HH}_T)}
 + C r^{-n}R^{-3} \eps \|u\varphi_{n+1}\|_{C^{2+\alpha}_s(\overline{\HH}_T)}
 + C r^{-n}R^{-3} \eps^{-m} \|u\varphi_{n+1}\|_{C(\overline{\HH}_T)}\\
&\quad \leq C\left(R^{\alpha/2} + r^{-n}R^{-3} \eps\right)\|u\varphi_{n+1}\|_{C^{2+\alpha}_s(\overline{\HH}_T)}
 + C r^{-n}R^{-3} \eps^{-m} \|u\varphi_{n+1}\|_{C(\overline{\HH}_T)}.
\end{aligned}
\end{equation*}
A similar argument gives us
\begin{equation*}
\begin{aligned}
& \|\left(b^i(t,x)-b^i(0,x^0)\right)(u\varphi_n)_{x_i}\|_{C^{\alpha}_s(\overline{\HH}_T)}
 + \|\left(c(t,x)-c(0,x^0)\right)(u\varphi_n)\|_{C^{\alpha}_s(\overline{\HH}_T)}\\
&\qquad \leq
 C r^{-n}R^{-3} \eps \|u\varphi_{n+1}\|_{C^{2+\alpha}_s(\overline{\HH}_T)} + C r^{-n}R^{-3} \eps^{-m} \|u\varphi_{n+1}\|_{C(\overline{\HH}_T)},
\end{aligned}
\end{equation*}
and so, using the preceding inequalities in \eqref{eq:EqLMinusL_0}, we obtain the estimate \eqref{eq:ADDLminusL_0}.
\end{proof}

Combining  \eqref{eq:ConstantCoeff5}, \eqref{eq:ADDLminusL_0} and \eqref{eq:ConstantCoeff1}, we obtain
\begin{equation}
\label{eq:ConstantCoeff7}
\begin{aligned}
\alpha_n
&\leq
C r^{-n} R^{-3}\left(\|f\|_{C^{\alpha}_s(\bar Q_{3R/2,T})} + \|g\|_{C^{2+\alpha}_s(\bar B_{3R/2})} \right)\\
&\quad+ C \left(R^{\alpha/2} + r^{-n}R^{-3} \eps\right)  \alpha_{n+1}
+ C r^{-n}R^{-3} \eps^{-m} \|u\|_{C(\bar Q_{3R/2,T})}.
\end{aligned}
\end{equation}
We multiply the inequality \eqref{eq:ConstantCoeff7} by $\delta^n$, where $\delta>0$ is chosen such that
\begin{equation}
\label{eq:Choicedelta}
r^{-(m+1)}\delta \leq 1/2.
\end{equation}
Next, we choose $R^*>0$ such that $CR^{* \alpha/2}=\delta/2$. For $R \in (0,R^*]$, we choose $\eps=\eps(n,R)\in(0,1)$ such that $Cr^{-n} R^{-3}\eps=\delta/2$. With this choice of $\delta$, $R^*$ and $\eps$, inequality \eqref{eq:ConstantCoeff7} yields, for all $R \in (0,R^*]$,
\begin{equation*}
\begin{aligned}
\delta^n \alpha_n &\leq
 C R^{-3}(r^{-1}\delta)^{n} \left(\|f\|_{C^{\alpha}_s(\bar Q_{3R/2,T})}  + \|g\|_{C^{2+\alpha}_s(\bar B_{3R/2})} \right)\\
&\quad + \delta^{n+1} \alpha_{n+1}
+ (2C)^{m+1} R^{-3(m+1)} \delta^{-m} (r^{-(m+1)}\delta)^{n} \|u\|_{C(\bar B_{3R/2,T})}.
\end{aligned}
\end{equation*}
By \eqref{eq:Choicedelta}, we also have $r^{-1}\delta \leq 1/2$. Then, by choosing
\[
C_1:=\max\left\{C R^{-3}, (2C)^{m+1} R^{-3(m+1)} \delta^{-m}\right\},
\]
we obtain
\begin{equation}
\label{eq:ConstantCoeff8}
\begin{aligned}
\delta^n \alpha_n &\leq
 C_1 \frac{1}{2^n} \left(\|f\|_{C^{\alpha}_s(\bar Q_{3R/2,T})}  + \|g\|_{C^{2+\alpha}_s(\bar B_{3R/2})} \right)
+ \delta^{n+1} \alpha_{n+1}
+ C_1 \frac{1}{2^n} \|u\|_{C(\bar Q_{3R/2,T})}.
\end{aligned}
\end{equation}
Summing inequality \eqref{eq:ConstantCoeff8} yields
\begin{equation*}
\begin{aligned}
\sum_{n=0}^{\infty} \delta^n \alpha_n
&\leq
C_1 \left(\|f\|_{C^{\alpha}_s(\bar Q_{3R/2,T})}  + \|g\|_{C^{2+\alpha}_s(\bar B_{3R/2})} \right)  \sum_{n=0}^{\infty} \frac{1}{2^n} \\
&\quad + \sum_{n=0}^{\infty} \delta^{n+1} \alpha_{n+1}
+ C_1 \|u\|_{C(\bar Q_{3R/2,T})} \sum_{n=0}^{\infty} \frac{1}{2^n} .
\end{aligned}
\end{equation*}
The sum $\sum_{n=0}^{\infty} \delta^n \alpha_n$ is well-defined because we assumed $u \in C^{2+\alpha}_s(\bar Q_{3R/2,T})$, for all $R\in(0,R^*]$ and $T\in(0, R]$, while $\delta \in (0,1)$. By subtracting the term $\sum_{n=1}^{\infty} \delta^n \alpha_n$ from both sides of the preceding inequality, we obtain the desired inequality \eqref{eq:AprioriBoundaryEst}.
\end{proof}

\subsection{Local a priori interior estimates}
\label{subsec:LocalAprioriInteriorEstimates}
In order to establish the local interior estimates, we need to track the dependency of the constant $N$ appearing in \cite[Lemma 9.2.1 and Theorem 9.2.2]{Krylov_LecturesHolder} on the constant of uniform ellipticity and on the supremum and H\"older norms of the coefficients. Lemma \ref{lem:ClassicalEstConstCoeff} and Proposition \ref{prop:ClassicalEstVarCoeff} apply to a parabolic operator,
\begin{equation}
\label{eq:ClassicalGen}
\begin{aligned}
-\bar{L} u := -u_t + \sum_{i,j=1}^d \bar{a}_{ij}u_{x_ix_j} + \sum_{i=1}^d \bar{b}_iu_{x_i} + \bar{c}u,
\end{aligned}
\end{equation}
whose coefficients obey

\begin{hyp}
\label{hyp:ClassicalCoeff}
There are positive constants $\delta_1$, $K_1$ and $\lambda_1$ such that
\begin{enumerate}
\item $(\bar a^{ij}(t,x))$ is a symmetric, positive-definite matrix, for all $t\in [0,T]$ and $x \in \mathbb{R}^d$.
\item The diffusion matrix $\bar a$ is non-degenerate,
\begin{equation}
\label{eq:NonDegeneracyBoundedCoeff}
\sum_{i,j=1}^d\bar{a}_{ij}(t,x) \xi_i \xi_j \geq \delta_1 |\xi|^2,\quad  \forall\, \xi \in \mathbb{R}^d, t\in [0,T], x \in \mathbb{R}^d.
\end{equation}
\item The coefficients $\bar a^{ij}$, $\bar b^i$ and $\bar c$ are uniformly H\"older continuous on $[0,T]\times\mathbb{R}^d$,
\begin{equation}
\label{eq:GlobalHolderContinuityBoundedCoeff}
\|\bar{a}_{ij}\|_{C^{\alpha}_{\rho}([0,T]\times\mathbb{R}^d)}+
       \|\bar{b}_i\|_{C^{\alpha}_{\rho}([0,T]\times\mathbb{R}^d)}+
       \|\bar{c}\|_{C^{\alpha}_{\rho}([0,T]\times\mathbb{R}^d)}
       \leq K_1.
\end{equation}
\item The zeroth-order coefficient, $\bar c$, is bounded from above,
\begin{equation}
\label{eq:UpperBoundBarC}
\bar c(t,x) \leq \lambda_1 \quad \forall\, t\in [0,T], x \in \mathbb{R}^d.
\end{equation}
\end{enumerate}
\end{hyp}

The difference between the statements of Lemma \ref{lem:ClassicalEstConstCoeff} and \cite[Lemmas 9.2.1 and 8.9.1]{Krylov_LecturesHolder} is that we explicitly give the dependency of the constant $N_2$ on $\delta_1$ and $K_1$; the proofs are the same except that at each step we explicitly determine the dependency of the constants appearing in the estimate \eqref{eq:SchauderEstConstCoeff} on $\delta_1$ and $K_1$.

\begin{lem} [A priori estimate for a simple parabolic operator with constant coefficients]
\label{lem:ClassicalEstConstCoeff}
Assume that  $(\bar{a}_{ij})$ in \eqref{eq:ClassicalGen} is a constant matrix obeying \eqref{eq:NonDegeneracyBoundedCoeff}, $\bar{b}_i=0$, and $\bar{c}=0$. Then there are positive constants,
\begin{align}
\label{eq:GrowthConstCoeff1}
N_1 &= N_1(d, \alpha, T), \\
\label{eq:GrowthConstCoeff2}
N_2 &= N_1 \max\{1,\delta_1^{-1}\} \max\{1, K_1\} (1+\delta_1^{-\alpha/2}) (1+K_1^{\alpha/2}),
\end{align}
such that, for any solution $u\in C^{2+\alpha}_{\rho}([0,T]\times\mathbb{R}^d)$ to
\begin{equation}
\label{eq:NonHomIniValConstCoeff}
\begin{aligned}
\begin{cases}
         \bar{L} u = f & \mbox{ on } (0,T)\times\RR^d, \\
         u(0,\cdot)=g  & \mbox{ on } \RR^d,
\end{cases}
\end{aligned}
\end{equation}
with $f \in C^{\alpha}_{\rho}([0,T]\times\mathbb{R}^d)$ and $g \in C^{2+\alpha}_{\rho}(\mathbb{R}^d)$, we have
\begin{equation}
\label{eq:SchauderEstConstCoeff}
\begin{aligned}
\|u\|_{C^{2+\alpha}_{\rho}([0,T]\times\mathbb{R}^d)}
&\leq N_2 \left(\|f\|_{C^{\alpha}_{\rho}([0,T]\times\mathbb{R}^d)} + \|g\|_{C^{2+\alpha}_{\rho}(\mathbb{R}^d)}\right).
\end{aligned}
\end{equation}
\end{lem}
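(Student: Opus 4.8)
The plan is to reduce the constant-coefficient problem \eqref{eq:NonHomIniValConstCoeff} to the model heat-type equation $-v_t+\Delta v=\tilde f$ by a linear change of the spatial variable, invoke the classical Schauder estimate for that model equation --- whose constant depends only on $d$, $\alpha$ and $T$ (see \cite[Chapter 8]{Krylov_LecturesHolder}) --- and then transfer the H\"older norms back through the change of variables, recording at each step the resulting powers of $\delta_1^{-1}$ and $K_1$. This follows the proof of \cite[Lemmas 9.2.1 and 8.9.1]{Krylov_LecturesHolder}; the only new content is the explicit bookkeeping of the dependence on $\delta_1$ and $K_1$.

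First I would reduce to a symmetric matrix: since $u_{x_ix_j}=u_{x_jx_i}$, the operator $\bar L$ (with $\bar b=0$, $\bar c=0$) depends only on the symmetric part of $(\bar a_{ij})$, which still satisfies \eqref{eq:NonDegeneracyBoundedCoeff} with the same $\delta_1$ and still has all entries bounded by $K_1$; hence its eigenvalues lie in $[\delta_1,\Lambda_1]$ with $\Lambda_1\leq C(d)K_1$. I would then take $S:=\bar a^{1/2}$, the symmetric positive-definite square root, so that $S^2=\bar a$, $\|S\|\leq C(d)K_1^{1/2}$ and $\|S^{-1}\|\leq\delta_1^{-1/2}$, and set $v(t,y):=u(t,Sy)$, $\tilde f(t,y):=f(t,Sy)$, $\tilde g(y):=g(Sy)$. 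The chain rule together with $S^2=\bar a$ gives $\Delta_y v(t,y)=\sum_{i,j=1}^d\bar a_{ij}u_{x_ix_j}(t,Sy)$ and $v_t(t,y)=u_t(t,Sy)$, so $v$ solves $-v_t+\Delta v=\tilde f$ on $(0,T)\times\RR^d$ with $v(0,\cdot)=\tilde g$. Applying the model estimate to $v$ yields $\|v\|_{C^{2+\alpha}_{\rho}}\leq N_0(d,\alpha,T)\bigl(\|\tilde f\|_{C^{\alpha}_{\rho}}+\|\tilde g\|_{C^{2+\alpha}_{\rho}}\bigr)$.

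The main work is then the transfer of norms. A linear change of the spatial variable leaves the $\sqrt{|t_1-t_2|}$ contribution to the parabolic distance $\rho$ untouched and multiplies the spatial contribution by at most $C(d)\max\{1,\|T\|\}$ for the relevant matrix $T$; hence passing between $w(t,x)$ and $w(t,Tx)$ preserves sup-norms and multiplies $C^{\alpha}_{\rho}$-seminorms by at most $(C(d)\max\{1,\|T\|\})^{\alpha}$. Moreover, the spatial derivatives pick up linear prefactors: writing $u(t,x)=v(t,S^{-1}x)$ introduces a factor $\leq C(d)\|S^{-1}\|$ in $u_{x_i}$ and $\leq C(d)\|S^{-1}\|^2$ in $u_{x_ix_j}$, while $\tilde g(y)=g(Sy)$ introduces a factor $\leq C(d)\|S\|^2$ in $\tilde g_{y_iy_j}$. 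Combining these observations with $\|S^{-1}\|^2\leq\delta_1^{-1}$, $\|S^{-1}\|^{\alpha}\leq\delta_1^{-\alpha/2}$, $\|S\|^2\leq C(d)K_1$ and $\|S\|^{\alpha}\leq C(d)K_1^{\alpha/2}$ gives
\begin{align*}
\|u\|_{C^{2+\alpha}_{\rho}([0,T]\times\RR^d)} &\leq C(d)\max\{1,\delta_1^{-1}\}\bigl(1+\delta_1^{-\alpha/2}\bigr)\|v\|_{C^{2+\alpha}_{\rho}([0,T]\times\RR^d)},\\
\|\tilde f\|_{C^{\alpha}_{\rho}([0,T]\times\RR^d)} &\leq C(d)\bigl(1+K_1^{\alpha/2}\bigr)\|f\|_{C^{\alpha}_{\rho}([0,T]\times\RR^d)},\\
\|\tilde g\|_{C^{2+\alpha}_{\rho}(\RR^d)} &\leq C(d)\max\{1,K_1\}\bigl(1+K_1^{\alpha/2}\bigr)\|g\|_{C^{2+\alpha}_{\rho}(\RR^d)}.
\end{align*}
Chaining these three inequalities with the model estimate and absorbing all $d$-dependent factors and $N_0$ into a single $N_1=N_1(d,\alpha,T)$ gives \eqref{eq:SchauderEstConstCoeff} with $N_2$ exactly of the form \eqref{eq:GrowthConstCoeff2}.

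The only genuine difficulty is this last bookkeeping step: one must keep the linear factors coming from differentiation (these produce the $\max\{1,\delta_1^{-1}\}$ and $\max\{1,K_1\}$ terms) separate from the $\alpha$-th powers coming from rescaling distances inside H\"older seminorms (these produce the $1+\delta_1^{-\alpha/2}$ and $1+K_1^{\alpha/2}$ terms), and verify that rescaling only the spatial variables does not disturb the $\sqrt{|t_1-t_2|}$ part of $\rho$; everything else is routine.
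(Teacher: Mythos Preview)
Your proposal is correct and follows essentially the same approach as the paper: diagonalize (equivalently, take the square root $S=\bar a^{1/2}$), change variables to reduce to the heat equation, apply the model Schauder estimate from \cite[Chapter~8]{Krylov_LecturesHolder}, and then track the powers of $\delta_1^{-1}$ and $K_1$ that arise when transferring the H\"older norms back through the linear change of spatial variable. The paper adds the minor intermediate step $w=e^{-t}v$ to match the exact form of \cite[Theorem~9.2.1]{Krylov_LecturesHolder} and isolates the norm-transfer bookkeeping as a separate claim, but the substance is identical to what you outline.
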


The proof of Lemma \ref{lem:ClassicalEstConstCoeff} can be found in Appendix \ref{app:ProofKrylovResults}. The statement of Proposition \ref{prop:ClassicalEstVarCoeff} is the same as that of \cite[Theorems 9.2.2 and 8.9.2]{Krylov_LecturesHolder} except that in the estimate \eqref{eq:SchauderEstVarCoeff}, the dependency of the constant $N_4$ on $\delta_1$ and $K_1$ is made explicit in \eqref{eq:GrowthVarCoeffP}, \eqref{eq:GrowthVarCoeffN_3} and \eqref{eq:GrowthVarCoeffN_4}.

\begin{prop} [A priori estimate for a parabolic operator with variable coefficients]
\label{prop:ClassicalEstVarCoeff}
Assume Hypothesis \ref{hyp:ClassicalCoeff}. Then there are positive constants,
\begin{align}
\label{eq:GrowthVarCoeffP}
p   &= p(\alpha) \geq 1,\\
\label{eq:GrowthVarCoeffN_3}
N_3 &= N_3(d, \alpha, T), \\
\label{eq:GrowthVarCoeffN_4}
N_4 &= N_3 e^{\lambda_1 T}\left( 1+\delta_1^{-p} + K_1^p\right) ,
\end{align}
such that, for any solution $u\in C^{2+\alpha}_{\rho}([0,T]\times\mathbb{R}^d)$ to
\begin{equation*}
\begin{aligned}
\begin{cases}
         \bar{L} u = f & \mbox{ on } (0,T)\times\mathbb{R}^d, \\
         u(0,\cdot)=g  & \mbox{ on } \RR^d,
\end{cases}
\end{aligned}
\end{equation*}
we have
\begin{equation}
\label{eq:SchauderEstVarCoeff}
\begin{aligned}
\|u\|_{C^{2+\alpha}_{\rho}([0,T]\times\mathbb{R}^d)}
&\leq N_4 \left(\|f\|_{C^{\alpha}_{\rho}([0,T]\times\mathbb{R}^d)} + \|g\|_{C^{2+\alpha}_{\rho}(\mathbb{R}^d)}\right).
\end{aligned}
\end{equation}
\end{prop}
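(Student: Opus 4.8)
The plan is to establish Proposition~\ref{prop:ClassicalEstVarCoeff} by the classical method of freezing coefficients combined with a Schauder-type patching argument, following the outline of \cite[Theorems 9.2.2 and 8.9.2]{Krylov_LecturesHolder}, but carrying out every estimate with explicit track of the dependence of the constants on $\delta_1$, $K_1$ and $\lambda_1$. The only genuinely new content relative to Krylov's treatment is bookkeeping: we must verify that the constant $N_4$ can be taken of the form \eqref{eq:GrowthVarCoeffN_4} with a power $p=p(\alpha)\geq 1$ depending only on $\alpha$, and with the factor $e^{\lambda_1 T}$ extracted cleanly.

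\emph{Step 1: Reduction to the case $\bar c\leq 0$.} First I would perform the substitution $u(t,x)=e^{\lambda_1 t}v(t,x)$, which transforms the equation $\bar L u = f$ into $\bar L_{\lambda_1} v = e^{-\lambda_1 t}f$, where $\bar L_{\lambda_1}$ has zeroth-order coefficient $\bar c - \lambda_1\leq 0$ by \eqref{eq:UpperBoundBarC}. Since $\|e^{\lambda_1 t}v\|_{C^{2+\alpha}_\rho}\leq e^{\lambda_1 T}\|v\|_{C^{2+\alpha}_\rho}$ and likewise for the right-hand side in the other direction, this is exactly where the factor $e^{\lambda_1 T}$ in \eqref{eq:GrowthVarCoeffN_4} enters, and henceforth I may assume $\bar c\leq 0$ (so that $\lambda_1$ no longer appears).

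\emph{Step 2: Freezing coefficients and local estimates.} Next I would cover $\RR^d$ by balls $B_r(x^k)$ of a radius $r$ to be chosen, with a subordinate partition of unity $\{\zeta_k\}$ of bounded overlap and with $\|\zeta_k\|_{C^{2+\alpha}_\rho}\leq C r^{-2-\alpha}$. For each $k$, applying Lemma~\ref{lem:ClassicalEstConstCoeff} to the constant-coefficient operator $\bar L_0^k$ obtained by freezing $(\bar a_{ij})$ at $(0,x^k)$ gives
\[
\|u\zeta_k\|_{C^{2+\alpha}_\rho}\leq N_2\left(\|\bar L_0^k(u\zeta_k)\|_{C^\alpha_\rho} + \|g\zeta_k\|_{C^{2+\alpha}_\rho}\right),
\]
with $N_2$ as in \eqref{eq:GrowthConstCoeff2} (note that the uniform ellipticity constant of the frozen matrix is still $\delta_1$ and its sup-norm is still $\leq K_1$). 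Writing $\bar L_0^k(u\zeta_k) = \zeta_k f - [\bar L,\zeta_k]u + (\bar L_0^k - \bar L)(u\zeta_k)$ and using \eqref{eq:GlobalHolderContinuityBoundedCoeff} to bound the oscillation of the coefficients over $B_r(x^k)$ by $C K_1 r^\alpha$, one obtains, for any $\eps\in(0,1)$,
\[
\|u\zeta_k\|_{C^{2+\alpha}_\rho}\leq N_2\Bigl(\|f\|_{C^\alpha_\rho} + \|g\|_{C^{2+\alpha}_\rho} + C(K_1 r^\alpha + \eps)\sum_{l\sim k}\|u\zeta_l\|_{C^{2+\alpha}_\rho} + C(r,\eps)\|u\|_{C(\RR^d\times[0,T])}\Bigr),
\]
where the interpolation inequality (in the $\rho$-metric version, i.e.\ the standard one) is used to absorb the lower-order terms coming from the commutator. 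Choosing $r$ so that $N_2 C K_1 r^\alpha$ and then $\eps$ so that $N_2 C\eps$ are each $\leq 1/4$ of the reciprocal of the overlap multiplicity, and summing over $k$ (here the finite-overlap property and a standard $\ell^\infty$ patching argument as in Krylov are needed to absorb $\sum_k\|u\zeta_k\|_{C^{2+\alpha}_\rho}$ into the left side), yields a global bound with the $\|u\|_C$ term still present.

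\emph{Step 3: Removing the $\|u\|_C$ term and collecting powers.} The sup-norm term is eliminated by invoking the maximum principle for $\bar L$ (with $\bar c\leq 0$), which bounds $\|u\|_C$ by $C(T\|f\|_C + \|g\|_C)$, hence by $C(\|f\|_{C^\alpha_\rho}+\|g\|_{C^{2+\alpha}_\rho})$. Finally I would trace the accumulated constant: $N_2\sim N_1\delta_1^{-1}K_1(1+\delta_1^{-\alpha/2})(1+K_1^{\alpha/2})$ from \eqref{eq:GrowthConstCoeff2}, the choice $r\sim (N_2 C K_1)^{-1/\alpha}$ contributes powers $r^{-2-\alpha-(m+1)\cdot\text{stuff}}$ of $N_2 K_1$ through $C(r,\eps)$, and $\eps\sim (N_2 C)^{-1}$ contributes further negative powers of $N_2$; all of these are polynomial in $\delta_1^{-1}$ and $K_1$ with exponents depending only on $d,\alpha$, so the product can be bounded by $N_3(d,\alpha,T)(1+\delta_1^{-p}+K_1^p)$ for a suitable $p=p(\alpha)\geq 1$, giving \eqref{eq:GrowthVarCoeffN_4} after reinstating $e^{\lambda_1 T}$ from Step~1.

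\emph{The main obstacle} is the bookkeeping in Step~3: one must make sure that the interplay between the choice of $r$ (forced by $K_1 r^\alpha$ being small relative to $N_2$), the choice of $\eps$, and the resulting blow-up of $C(r,\eps)=C r^{-(2+\alpha)}\eps^{-m}$ still produces only a \emph{polynomial} dependence on $\delta_1^{-1}$ and $K_1$, with a power $p$ that is uniform in these parameters (depending on $\alpha$ alone, with $d,T$ absorbed into $N_3$). Since every exponent that appears ($2+\alpha$ from the cutoff, $\alpha/2$ from the interpolation inequality, $m=m(d,\alpha)$ from Lemma~\ref{lem:ClassicalEstConstCoeff}, $1/\alpha$ from solving $K_1 r^\alpha\sim N_2^{-1}$) is a fixed function of $d$ and $\alpha$, this is purely mechanical, but it requires care; the detailed computation is carried out in Appendix~\ref{app:ProofKrylovResults}.
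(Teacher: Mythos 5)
Your outline is essentially sound, but it follows a genuinely different route from the paper's actual argument for Proposition~\ref{prop:ClassicalEstVarCoeff} in Appendix~\ref{app:ProofKrylovResults}. Rather than covering $\RR^d$ by a locally finite partition of unity and patching via bounded overlap, the paper uses the classical ``two-points'' trick that Krylov himself uses in \cite[Theorems~9.2.2 and 8.9.2]{Krylov_LecturesHolder}: pick $z_1, z_2$ realizing (up to a factor $2$) the seminorm $[u_t]_{C^\alpha_\rho}$, split into the two cases $\rho(z_1,z_2)\geq\gamma$ (handled directly by interpolation and the sup-norm bound) and $\rho(z_1,z_2)<\gamma$ (handled by a \emph{single} cutoff $\varphi$ supported in a parabolic box of radius $2\gamma$ about $z_1$ and by freezing the top-order coefficients at $z_1$). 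This sidesteps the two extra assembly steps your covering argument needs --- taking the supremum over the finite-overlap family, and then converting a uniform bound on $\sup_k\|u\zeta_k\|_{C^{2+\alpha}_\rho}$ into a bound on the global seminorm $[u_t]_{C^\alpha_\rho([0,T]\times\RR^d)}$ by yet another near/far split --- so the paper's proof is structurally shorter and closer to Krylov verbatim. Your way works in principle, but the phrase ``standard $\ell^\infty$ patching argument'' is doing more work than it lets on, since Hölder seminorms do not localize to a covering without the near/far dichotomy being spelled out separately.

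One more substantive point: your Step~1 (substitute $u=e^{\lambda_1 t}v$ and assume $\bar c\leq 0$ thereafter) is cleaner-looking but does not quite preserve the hypothesis constants. After the substitution the zeroth-order coefficient becomes $\bar c-\lambda_1$, whose sup-norm is bounded by $K_1+\lambda_1$, not $K_1$; so the ``new $K_1$'' acquires a $\lambda_1$ contribution, and the claimed clean split $N_4 = N_3 e^{\lambda_1 T}(1+\delta_1^{-p}+K_1^p)$ would then hide a polynomial dependence on $\lambda_1$ inside the $K_1^p$ factor. The paper avoids this by not reducing to $\bar c\leq 0$ globally: it introduces $\bar u=e^{-\lambda_1 t}u$ \emph{only} to derive the sup-norm bound \eqref{eq:BoundOnTheSupNorm} via the maximum principle, and then carries out the rest of the Schauder argument on $u$ itself with the original coefficients, so that $e^{\lambda_1 T}$ enters precisely once and $K_1$ is never perturbed. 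If you keep your covering approach, you should similarly confine the exponential substitution to the maximum-principle step.
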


The proof of Proposition \ref{prop:ClassicalEstVarCoeff} can be found in Appendix \ref{app:ProofKrylovResults}. Next, we have the

\begin{prop} [Local estimates for parabolic operators with variable coefficients]
\label{prop:ClassicalEstVarCoeffLocal}
Assume Hypothesis \ref{hyp:ClassicalCoeff} and that $R>0$. Then there are positive constants,
\begin{align}
\label{eq:GrowthVarCoeffLocalP}
p   &= p(\alpha) \geq 1,\\
\label{eq:GrowthVarCoeffLocalN_3}
N_3 &= N_3(d, \alpha, T, R), \\
\label{eq:GrowthVarCoeffLocalN_4}
N_4 &= N_3 e^{\lambda_1 T}\left( 1+\delta_1^{-p} + K_1^p\right) ,
\end{align}
such that for any $x^0\in\RR^d$ and any solution $u\in C^{2+\alpha}_{\rho}(\bar Q_{2R,T}(x^0))$ to
\begin{equation*}
\begin{aligned}
\begin{cases}
         \bar{L} u = f & \mbox{ on }  Q_{2R,T}(x^0), \\
         u(0,\cdot)=g  & \mbox{ on } \bar B_{2R}(x^0),
\end{cases}
\end{aligned}
\end{equation*}
we have
\begin{equation}
\label{eq:SchauderEstVarCoeffLocal}
\begin{aligned}
\|u\|_{C^{2+\alpha}_{\rho}(\bar Q_{R,T}(x^0))}
&\leq N_4 \left(\|f\|_{C^{\alpha}_{\rho}(\bar Q_{2R,T}(x^0))} + \|g\|_{C^{2+\alpha}_{\rho}(\bar B_{2R}(x^0))} \right.\\
&\qquad\left. + \|u\|_{C(\bar Q_{2R,T}(x^0))}\right).
\end{aligned}
\end{equation}
\end{prop}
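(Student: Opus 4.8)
The plan is to obtain \eqref{eq:SchauderEstVarCoeffLocal} by localizing the \emph{global} variable-coefficient estimate of Proposition \ref{prop:ClassicalEstVarCoeff}, following the cutoff-and-iterate scheme used in the proof of Theorem \ref{thm:AprioriBoundaryEst}, with two simplifications. First, because we work with the Euclidean H\"older norms $C^{\bullet}_{\rho}$ rather than the cycloidal norms $C^{\bullet}_{s}$, the standard interpolation inequality \cite[Theorem~8.8.1]{Krylov_LecturesHolder} is available in place of Lemma \ref{lem:InterpolationIneqS}. Second, since Hypothesis \ref{hyp:ClassicalCoeff} is assumed on all of $[0,T]\times\RR^d$ and Proposition \ref{prop:ClassicalEstVarCoeff} already applies to \emph{variable} coefficients, there is no need to freeze the coefficients at a point; only the solution must be localized.

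Concretely, I would fix $x^0\in\RR^d$, choose a nested family of radii $R=R_0<R_1<\cdots\uparrow 2R$ with $R_{n+1}-R_n$ decaying geometrically, and pick time-independent cutoffs $\varphi_n\in C^\infty_0(\RR^d)$ with $0\le\varphi_n\le1$, $\varphi_n\equiv1$ on $B_{R_n}(x^0)$, $\varphi_n\equiv0$ off $B_{R_{n+1}}(x^0)$, and $\|\varphi_n\|_{C^\alpha_{\rho}}+\|(\varphi_n)_{x_i}\|_{C^\alpha_{\rho}}+\|(\varphi_n)_{x_ix_j}\|_{C^\alpha_{\rho}}\le c\,r^{-n}R^{-3}$ for a universal $c$ and some $r=r(d,\alpha)\in(0,1)$, exactly as in \eqref{eq:PropCutOffFunction}. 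Extending $u\varphi_n$ by zero one has $u\varphi_n\in C^{2+\alpha}_{\rho}([0,T]\times\RR^d)$ and $\bar{L}(u\varphi_n)=\varphi_n f-[\bar{L},\varphi_n]u$, where, since $\varphi_n$ is $t$-independent, the commutator involves only $\bar{a}_{ij}u_{x_i}(\varphi_n)_{x_j}$, $\bar{b}_i u(\varphi_n)_{x_i}$ and $\bar{a}_{ij}u(\varphi_n)_{x_ix_j}$. Setting $\alpha_n:=\|u\varphi_n\|_{C^{2+\alpha}_{\rho}([0,T]\times\RR^d)}$ and applying Proposition \ref{prop:ClassicalEstVarCoeff} to $u\varphi_n$ gives $\alpha_n\le N_4(\|\varphi_n f\|_{C^\alpha_{\rho}}+\|[\bar{L},\varphi_n]u\|_{C^\alpha_{\rho}}+\|g\varphi_n\|_{C^{2+\alpha}_{\rho}(\RR^d)})$, with $N_4$ as in \eqref{eq:GrowthVarCoeffN_4}.

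The remaining steps are routine. Using the product rule for H\"older norms and \eqref{eq:PropCutOffFunction}, the first and third terms are bounded by $c\,r^{-n}R^{-3}(\|f\|_{C^\alpha_{\rho}(\bar Q_{2R,T}(x^0))}+\|g\|_{C^{2+\alpha}_{\rho}(\bar B_{2R}(x^0))})$. Since $\nabla\varphi_n,\nabla^2\varphi_n$ are supported where $\varphi_{n+1}\equiv1$, the bound \eqref{eq:GlobalHolderContinuityBoundedCoeff} gives $\|[\bar{L},\varphi_n]u\|_{C^\alpha_{\rho}}\le C(d)K_1 r^{-n}R^{-3}(\|(u\varphi_{n+1})_{x_i}\|_{C^\alpha_{\rho}}+\|u\varphi_{n+1}\|_{C^\alpha_{\rho}})$, and the standard interpolation inequality applied to $u\varphi_{n+1}$ bounds the last parenthesis by $\eps\,\alpha_{n+1}+C\eps^{-m}\|u\|_{C(\bar Q_{2R,T}(x^0))}$ for any $\eps\in(0,1)$ and $m=m(d,\alpha)$. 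Combining these yields a recursion $\alpha_n\le C_0 r^{-n}(\|f\|_{C^\alpha_{\rho}(\bar Q_{2R,T})}+\|g\|_{C^{2+\alpha}_{\rho}(\bar B_{2R})})+C_0 r^{-n}\eps\,\alpha_{n+1}+C_0 r^{-n}\eps^{-m}\|u\|_{C(\bar Q_{2R,T})}$, with $C_0=C_0(d,\alpha,R,N_4,K_1)$. I would then reuse the summation device from Theorem \ref{thm:AprioriBoundaryEst}: fix $\delta\in(0,1)$ with $r^{-(m+1)}\delta\le1/2$, choose $\eps=\eps(n)$ so that $C_0 r^{-n}\eps=\delta/2$, multiply by $\delta^n$, sum over $n\ge0$ (the series converges since $u\in C^{2+\alpha}_{\rho}(\bar Q_{2R,T})$ and $\delta<1$), and cancel the common tail; as $\|u\|_{C^{2+\alpha}_{\rho}(\bar Q_{R,T}(x^0))}\le\alpha_0$, this gives \eqref{eq:SchauderEstVarCoeffLocal}.

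The one step needing genuine care — and the reason for stating this proposition separately rather than merely quoting \cite[Theorems~9.2.2 and 8.9.2]{Krylov_LecturesHolder} — is tracking the dependence of the final constant on $\delta_1$, $K_1$ and $\lambda_1$. The scheme produces a constant of the form $C(d,\alpha,R)\cdot K_1\cdot N_4$, and since $N_4=N_3 e^{\lambda_1 T}(1+\delta_1^{-p}+K_1^p)$ the extra factor $K_1$ is absorbed by increasing the exponent $p=p(\alpha)$ by one; hence one obtains a bound of the claimed shape $N_4'=N_3'e^{\lambda_1 T}(1+\delta_1^{-p'}+K_1^{p'})$ with $p'=p'(\alpha)\ge1$ and $N_3'=N_3'(d,\alpha,T,R)$, matching \eqref{eq:GrowthVarCoeffLocalP}--\eqref{eq:GrowthVarCoeffLocalN_4}. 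I expect no further obstacle, since all of the analytic content is already contained in Proposition \ref{prop:ClassicalEstVarCoeff}.
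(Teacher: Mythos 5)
Your proposal is correct and follows essentially the same route as the paper, which simply directs the reader to repeat the cutoff-and-summation argument of Theorem~\ref{thm:AprioriBoundaryEst} with Proposition~\ref{prop:ClassicalEstVarCoeff} substituted for Proposition~\ref{prop:ConstantCoeff} and the classical interpolation inequality \cite[Theorem~8.8.1]{Krylov_LecturesHolder} substituted for Lemma~\ref{lem:InterpolationIneqS}. Your observation that freezing the coefficients (and hence the $(L-L_0)$ term) drops out because Proposition~\ref{prop:ClassicalEstVarCoeff} already handles variable coefficients, and your bookkeeping of how the extra factor of $K_1$ from the commutator is absorbed into a larger exponent $p$, correctly flesh out the two modifications the paper invokes.
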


\begin{proof}
The proof follows by the same argument as in Theorem \ref{thm:AprioriBoundaryEst} with the following modifications:
\begin{itemize}
\item In inequality \eqref{eq:ADDLocalAprioriBoundaryEst}, instead of applying Proposition \ref{prop:ConstantCoeff}, we apply Proposition \ref{prop:ClassicalEstVarCoeff}.
\item We use the interpolation inequalities for classical H\"older spaces $C^{2+\alpha}_{\rho}$ (\cite[Theorem 8.8.1]{Krylov_LecturesHolder}), instead of the interpolation inequalities for the H\"older spaces $C^{2+\alpha}_s$ (Lemma \ref{lem:InterpolationIneqS}).
\end{itemize}
This completes the proof.
\end{proof}

We now consider estimates for the operator $L$ in \eqref{eq:Generator}.

\begin{prop} [Interior local estimates]
\label{prop:InteriorEst}
There is a positive constant $p=p(\alpha)$, and for any $R\in (0, R^*]$, with $R^*$ as in Theorem \ref{thm:AprioriBoundaryEst}, there is a positive constant $C=C(d,\alpha,T,K,\delta,R^*, R)$ such that, for any $x^0\in \HH$, satisfying $x^0_d-2R\geq R^*/2$ and for any solution $u \in C^{2+\alpha}_{\rho}(\bar Q_{2R,T}(x^0))$ to the inhomogeneous initial value problem,
\begin{equation*}
\begin{aligned}
\begin{cases}
         L u = f & \mbox{ on }  Q_{2R,T}(x^0), \\
         u(0,\cdot)=g  & \mbox{ on } \bar B_{2R}(x^0),
\end{cases}
\end{aligned}
\end{equation*}
we have
\begin{equation}
\label{eq:InteriorEst}
\begin{aligned}
\|u\|_{C^{2+\alpha}_{\rho}(\bar Q_{R,T}(x^0))}
&\leq C \left(\|f\|_{\sC^{\alpha}_p(\bar Q_{2R,T}(x^0))} + \|g\|_{\sC^{2+\alpha}_p(\bar B_{2R}(x^0))} \right.
\\
&\qquad + \left.\|u\|_{\sC^0_p(\bar Q_{2R,T}(x^0))}\right).
\end{aligned}
\end{equation}
\end{prop}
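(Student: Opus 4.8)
The plan is to exploit the fact that on the cylinder $\bar Q_{2R,T}(x^0)$ the operator $L$ in \eqref{eq:Generator} is uniformly parabolic, since $x^0_d-2R\geq R^*/2>0$ forces $x_d\geq R^*/2$ there, and then to reduce the estimate to Proposition \ref{prop:ClassicalEstVarCoeffLocal} (the local Schauder estimate for uniformly parabolic operators with variable coefficients on $\RR^d$). The one genuinely ``degenerate'' feature that survives is the possible linear growth of the coefficients $(x_da,b,c)$ in $x$, which on the bounded ball $\bar B_{2R}(x^0)$ only makes them as large as $O(1+|x^0|)$; this will be absorbed by the polynomial weight $(1+|x|)^p$ built into the norms $\sC^\alpha_p$, $\sC^{2+\alpha}_p$, $\sC^0_p$ on the right of \eqref{eq:InteriorEst}, which on the bounded cylinders occurring in the estimate is comparable to the constant $(1+|x^0|)^p$.

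Concretely, I would first write $L=\bar L$ as in \eqref{eq:ClassicalGen} with $\bar a^{ij}:=x_da^{ij}$, $\bar b^i:=b^i$, $\bar c:=c$, and record that on $\bar Q_{2R,T}(x^0)$: (i) $\sum_{i,j}\bar a^{ij}\eta_i\eta_j\geq\delta_1|\eta|^2$ with $\delta_1:=\delta\min\{1,R^*/2\}$, using \eqref{eq:NonDegeneracyNearBoundary} where $x_d\leq2$ and \eqref{eq:NonDegeneracyInterior} where $x_d\geq2$ together with $x_d\geq R^*/2$; (ii) $\|\bar a^{ij}\|_{C(\bar Q_{2R,T}(x^0))}+\|\bar b^i\|_{C(\bar Q_{2R,T}(x^0))}+\|\bar c\|_{C(\bar Q_{2R,T}(x^0))}\leq CK(1+|x^0|)$ by \eqref{eq:LinearGrowth} and $|x-x^0|\leq2R\leq2R^*$, together with $\bar c\leq K$ from \eqref{eq:ZerothOrderTermUpperBound}; and (iii) the $C^{\alpha}_\rho$-seminorms of $\bar a^{ij},\bar b^i,\bar c$ on $\bar B_{2R}(x^0)$ are bounded by a constant $C(d,\alpha,R^*)K$ that is \emph{independent of $x^0$}, which for $\rho$-distances at most $1$ comes from \eqref{eq:LocalHolderRho} (where $x_d\geq2$) and from \eqref{eq:LocalHolderS} via the equivalence of $s$ and $\rho$ on $\{R^*/2\leq x_d\leq2\}$ from Remark \ref{rmk:EquivalenceMetric} (on this bounded region $x_d$ is Lipschitz and $a^{ij}$ is bounded by \eqref{eq:BoundednessNearBoundary}, so $x_da^{ij}$ inherits H\"older continuity), and for $\rho$-distances between $1$ and $4R^*$ comes from chaining along the segment joining the two points, which stays in the convex set $\bar B_{2R}(x^0)$. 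Extending $\bar a^{ij},\bar b^i,\bar c$ from $\bar Q_{2R,T}(x^0)$ to $[0,T]\times\RR^d$ in the standard way, preserving these sup and H\"older bounds, the symmetry and positive definiteness of $\bar a$, and the upper bound on $\bar c$, Hypothesis \ref{hyp:ClassicalCoeff} holds with $\delta_1=\delta\min\{1,R^*/2\}$, $K_1=CK(1+|x^0|)$ and $\lambda_1=K$; since the conclusion of Proposition \ref{prop:ClassicalEstVarCoeffLocal} is local on $\bar Q_{2R,T}(x^0)$, this modification outside $\bar B_{2R}(x^0)$ is harmless.

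Applying Proposition \ref{prop:ClassicalEstVarCoeffLocal} then gives, with some $p=p(\alpha)\geq1$,
\begin{equation*}
\|u\|_{C^{2+\alpha}_\rho(\bar Q_{R,T}(x^0))}\leq N_4\left(\|f\|_{C^\alpha_\rho(\bar Q_{2R,T}(x^0))}+\|g\|_{C^{2+\alpha}_\rho(\bar B_{2R}(x^0))}+\|u\|_{C(\bar Q_{2R,T}(x^0))}\right),
\end{equation*}
and by \eqref{eq:GrowthVarCoeffLocalN_4} with the above choices, $N_4=N_3e^{\lambda_1T}(1+\delta_1^{-p}+K_1^p)\leq C(d,\alpha,T,K,\delta,R^*,R)(1+|x^0|)^p$. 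Finally, on the bounded sets $\bar Q_{2R,T}(x^0)$ and $\bar B_{2R}(x^0)$ the weight $(1+|x|)^p$ is comparable to $(1+|x^0|)^p$ with constants depending only on $R^*$ and $p$, its H\"older norm relative to $(1+|x^0|)^p$ is similarly bounded, and $1/x_d\leq 2/R^*$ is Lipschitz there; comparing \eqref{eq:DefinitionX_0_q_T}, \eqref{eq:DefinitionX_Alpha_q_T}, \eqref{eq:DefinitionX_Two_Aplha_q_T} with the unweighted norms (and using $s\asymp\rho$ since $x_d\geq R^*/2$) yields $\|f\|_{C^\alpha_\rho(\bar Q_{2R,T}(x^0))}\leq C(1+|x^0|)^{-p}\|f\|_{\sC^\alpha_p(\bar Q_{2R,T}(x^0))}$, and likewise $\|g\|_{C^{2+\alpha}_\rho(\bar B_{2R}(x^0))}\leq C(1+|x^0|)^{-p}\|g\|_{\sC^{2+\alpha}_p(\bar B_{2R}(x^0))}$ and $\|u\|_{C(\bar Q_{2R,T}(x^0))}\leq C(1+|x^0|)^{-p}\|u\|_{\sC^0_p(\bar Q_{2R,T}(x^0))}$. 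Substituting these into the displayed inequality, the factors $(1+|x^0|)^{\pm p}$ cancel and \eqref{eq:InteriorEst} follows.

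The main obstacle, and the only place requiring real care, is the bookkeeping of the $|x^0|$-dependence: one must split the coefficient norms into a sup part of size $O(1+|x^0|)$ and a H\"older-seminorm part that is uniformly bounded in $x^0$ (hence the use of the local conditions \eqref{eq:LocalHolderS}--\eqref{eq:LocalHolderRho} and chaining, rather than the crude growth bound \eqref{eq:LinearGrowth}), so that the power $(1+|x^0|)^p$ produced by $N_4$ in Proposition \ref{prop:ClassicalEstVarCoeffLocal} is exactly matched by the factor $(1+|x^0|)^{-p}$ gained when the weighted norms on the right of \eqref{eq:InteriorEst} are replaced by unweighted ones; a minor additional subtlety, handled via Remark \ref{rmk:EquivalenceMetric}, arises when $\bar B_{2R}(x^0)$ straddles $\{x_d=2\}$. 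Everything else is a routine transcription of Proposition \ref{prop:ClassicalEstVarCoeffLocal}.
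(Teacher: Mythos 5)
Your proof is correct and follows essentially the same route as the paper: you verify Hypothesis~\ref{hyp:ClassicalCoeff} on $\bar Q_{2R,T}(x^0)$ with $\delta_1$ independent of $x^0$ and $K_1=O(1+|x^0|)$, invoke Proposition~\ref{prop:ClassicalEstVarCoeffLocal} to get the unweighted estimate with constant $N_4\leq C(1+|x^0|)^p$, and then convert the unweighted norms on the right to the weighted $\sC^\alpha_p$, $\sC^{2+\alpha}_p$, $\sC^0_p$ norms at the price of a compensating factor $(1+|x^0|)^{-p}$, which is exactly the content of the paper's Claim~\ref{claim:PassToNewSpaces}. Your extra care in showing (via chaining and Remark~\ref{rmk:EquivalenceMetric}) that the H\"older \emph{seminorms} of the frozen coefficients are uniform in $x^0$ is a slightly sharper bookkeeping than the paper makes explicit, but it is not needed: the crude bound $K_1=O(1+|x^0|)$ already yields $K_1^p=O((1+|x^0|)^p)$, which is all the argument uses.
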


\begin{proof}
From Proposition \ref{prop:ClassicalEstVarCoeffLocal}, the linear growth estimate \eqref{eq:LinearGrowth}, and the fact that the matrix $(x_da^{ij}(t,x))$ is uniformly elliptic on $\overline{\HH}_T\less\HH_{R^* /2,T}$ by \eqref{eq:NonDegeneracyNearBoundary} and \eqref{eq:NonDegeneracyInterior}, we obtain
\begin{equation}
\label{eq:NicerExpression}
\begin{aligned}
\|u\|_{C^{2+\alpha}_{\rho}(\bar Q_{R,T}(x^0))} &\leq C_1 (1+|x^0|)^p \left(\|f\|_{C^{\alpha}_{\rho}(\bar Q_{2R,T}(x^0))} + \|g\|_{C^{2+\alpha}_{\rho}(\bar B_{2R}(x^0))} \right.
\\
&\quad\left. + \|u\|_{C(\bar Q_{2R,T}(x^0))}\right),
\end{aligned}
\end{equation}
where $C_1$ is a positive constant depending only on $T$, $K$, $\delta$, $R^*$ and $R$.

\begin{claim}
\label{claim:PassToNewSpaces}
Given a function $v \in C^{2+\alpha}_{\rho}(\bar Q_{2R,T}(x^0))$, there is a positive constant $C_2$, depending only on $R^*$, $p$ and $\alpha$, such that for all $R \in (0,R^*]$  and $x^0 \in \HH_T$, we have
\begin{equation}
\label{eq:PassToNewSpaces}
(1+|x^0|)^p \|v\|_{C^{\alpha}_{\rho}(\bar Q_{2R,T}(x^0))} \leq  C_2\|v\|_{\sC^{\alpha}_{p}(\bar Q_{2R,T}(x^0))}.
\end{equation}
\end{claim}

\begin{proof}[Proof of Claim \ref{claim:PassToNewSpaces}]
Recall that, by definition \eqref{eq:DefinitionX_Alpha_q_T},
\[
\|(1+|x|)^p v\|_{C^{\alpha}_{\rho}(\bar Q_{2R,T}(x^0))} = \|v\|_{\sC^{\alpha}_{p}(\bar Q_{2R,T}(x^0))}.
\]
We may write
\begin{equation*}
(1+|x^0|)^p  |v(t,x)| = \left(\frac{1+|x^0|}{1+|x|}\right)^p (1+|x|)^p |v(t,x)|, \quad \forall\, (t,x) \in \bar Q_{2R,T}(x^0).
\end{equation*}
We can find a constant $C_2=C_2(R^*, p)$ such that
\begin{equation}
\label{eq:PassToNewSpaces1}
 \left(\frac{1+|x^0|}{1+|x|}\right)^p \leq C_2, \quad \forall\, x \in \bar B_{2R}(x^0), \quad \forall\, 0<R<R^*,
\end{equation}
which implies
\begin{equation}
\label{eq:PassToNewSpaces2}
 (1+|x^0|)^p \|v\|_{C(\bar Q_{2R,T}(x^0))} \leq  C_2\|(1+|x|)^pv\|_{C(\bar Q_{2R,T}(x^0))}.
\end{equation}
Next, we have
\begin{align*}
(1+|x^0|)^p [v]_{C^{\alpha}_{\rho}(\bar Q_{2R,T}(x^0))}
&= (1+|x^0|)^p \left[\frac{1}{(1+|x|)^p}(1+|x|)^p v\right]_{C^{\alpha}_{\rho}(\bar Q_{2R,T}(x^0))}\\
&\leq
(1+|x^0|)^p \left[\frac{1}{(1+|x|)^p}\right]_{C^{\alpha}_{\rho}(\bar B_{2R}(x^0))} \|(1+|x|)^p v\|_{C(\bar Q_{2R,T}(x^0))}\\
&\quad
+(1+|x^0|)^p \left\|\frac{1}{(1+|x|)^p}\right\|_{C(\bar B_{2R}(x^0))} [(1+|x|)^p v]_{C^{\alpha}_{\rho}(\bar Q_{2R,T}(x^0))}.
\end{align*}
As in \eqref{eq:PassToNewSpaces1}, there is a (possibly larger) constant $C_2=C_2(R^*,p,\alpha)$ such that
\[
(1+|x^0|)^p \left[\frac{1}{(1+|x|)^p}\right]_{C^{\alpha}_{\rho}(\bar B_{2R}(x^0))} \leq C_2.
\]
Therefore, we obtain
\begin{align}
\label{eq:PassToNewSpaces3}
(1+|x^0|)^p [v]_{C^{\alpha}_{\rho}(\bar Q_{2R,T}(x^0))}
& \leq C_2 \|(1+|x|)^p v\|_{C(\bar Q_{2R,T}(x^0))} + C_2 [(1+|x|)^p v]_{C^{\alpha}_{\rho}(\bar Q_{2R,T}(x^0))}.
\end{align}
Combining inequalities \eqref{eq:PassToNewSpaces2} and \eqref{eq:PassToNewSpaces3} yields the desired inequality \eqref{eq:PassToNewSpaces}.
\end{proof}

Claim \ref{claim:PassToNewSpaces} implies that
\begin{equation*}
\begin{aligned}
(1+|x^0|)^p \|f\|_{C^{\alpha}_{\rho}(\bar Q_{2R,T}(x^0))} &\leq  C_2 \|f\|_{\sC^{\alpha}_p(\bar Q_{2R,T}(x^0))}, \\
(1+|x^0|)^p \|g\|_{C^{2+\alpha}_{\rho}(\bar B_{2R}(x^0))} &\leq  C_2\|g\|_{\sC^{2+\alpha}_p(\bar B_{2R}(x^0))}, \\
(1+|x^0|)^p \|u\|_{C(\bar Q_{2R,T}(x^0))} &\leq  C_2\|u\|_{\sC^0_p(\bar Q_{2R,T}(x^0))},
\end{aligned}
\end{equation*}
and so, the interior local estimate \eqref{eq:InteriorEst} follows from the preceding inequalities and \eqref{eq:NicerExpression}.
\end{proof}

\subsection{Global a priori estimates and existence of solutions}
\label{subsec:GlobalAprioriEstimatesExistence}
The goal of this subsection is to establish Theorem \ref{thm:MainExistenceUniquenessPDE}. For this purpose, we need to first prove the analogue of Theorem \ref{thm:MainExistenceUniquenessPDE} when the coefficients are uniformly H\"older continuous on $\HH_T\less\HH_{2,T} = (0,T)\times\RR^{d-1}\times[2,\infty)$.

\begin{hyp}
\label{hyp:CoeffBounded}
In addition to the conditions in Assumption \ref{assump:Coeff}, assume that there is a positive constant $K_2$  such that the coefficients of $L$ obey
\begin{equation}
\label{eq:GlobalHolderRho}
\|x_d a^{ij}\|_{C^{\alpha}_{\rho}(\overline{\HH_T\less\HH_{2,T}})}+\|b^i\|_{C^{\alpha}_{\rho}(\overline{\HH_T\less\HH_{2,T}})}+ \|c\|_{C^{\alpha}_{\rho}(\overline{\HH_T\less\HH_{2,T}})} \leq K_2.
\end{equation}
\end{hyp}

We first derive global a priori estimates of solutions in the case of bounded coefficients.

\begin{lem}[Global estimates in the case of parabolic operators with bounded coefficients]
\label{lem:GlobalEstBoundedCoeff}
Suppose Hypothesis \ref{hyp:CoeffBounded} is satisfied. There exists a positive constant $C=C(T,\alpha,d,K_2,\delta, \nu)$ such that for any solution $u \in \sC^{2+\alpha}_{\loc}(\overline{\HH}_T)$ to \eqref{eq:Problem}, such that $Lu \in \sC^{\alpha}(\overline{\HH}_T)$ and $u(0,\cdot) \in \sC^{2+\alpha}(\overline{\HH})$, then $u \in \sC^{2+\alpha}(\overline{\HH}_T)$ and $u$ satisfies the global estimate
\begin{equation}
\label{eq:GlobalEstBoundedCoeff}
\begin{aligned}
\|u\|_{\sC^{2+\alpha}(\overline{\HH}_T)}
&\leq C \left(  \|Lu\|_{\sC^{\alpha}(\overline{\HH}_T)} +  \|u(0, \cdot)\|_{\sC^{2+\alpha}(\overline{\HH})} \right).
\end{aligned}
\end{equation}
\end{lem}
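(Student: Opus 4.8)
The plan is to combine the local a priori estimates already available --- the boundary estimate of Theorem~\ref{thm:AprioriBoundaryEst} near $\{x_d=0\}$ and the interior estimate of Proposition~\ref{prop:ClassicalEstVarCoeffLocal} away from it --- into a global estimate carrying $\|u\|_{C(\overline{\HH}_T)}$ on the right, and then to absorb that term via the maximum principle (Proposition~\ref{prop:MaximumPrinciple}(a)). First I reduce to the case $T\le R^*$, where $R^*=R^*(d,\alpha,K,\delta,\nu)$ is the constant of Theorem~\ref{thm:AprioriBoundaryEst}; shrinking $R^*$ if necessary, I may assume $R^*\le 2/3$, so that $\overline{\HH}_{3R^*/2,T}\subset\overline{\HH}_{1,T}$.

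\emph{Local estimates.} Near the boundary: for each $x^0\in\partial\HH$ the function $u$ solves \eqref{eq:AprioriBoundaryPDE} on $Q_{3R^*/2,T}(x^0)$ with $f=Lu$, $g=u(0,\cdot)$, and $u\in C^{2+\alpha}_s(\bar Q_{3R^*/2,T}(x^0))$ since $u\in\sC^{2+\alpha}_{\loc}(\overline{\HH}_T)$; hence Theorem~\ref{thm:AprioriBoundaryEst} with $R=R^*$ bounds $\|u\|_{C^{2+\alpha}_s(\bar Q_{R^*,T}(x^0))}$ by $C(\|Lu\|_{C^{\alpha}_s(\bar Q_{3R^*/2,T}(x^0))}+\|u(0,\cdot)\|_{C^{2+\alpha}_s(\bar B_{3R^*/2}(x^0))}+\|u\|_{C(\bar Q_{3R^*/2,T}(x^0))})$, and, as $\bar Q_{3R^*/2,T}(x^0)\subset\overline{\HH}_{1,T}$, each summand is dominated by the corresponding $\sC^{\alpha}(\overline{\HH}_T)$, $\sC^{2+\alpha}(\overline{\HH})$, or $C(\overline{\HH}_T)$ norm. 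Away from the boundary: fix $R:=R^*/16$; for $x^0\in\HH$ with $x^0_d-2R\ge R^*/2$ one has $\bar B_{2R}(x^0)\subset\{x_d\ge R^*/2\}$, on which the coefficients of $L$ (extended to $\RR^d$) satisfy Hypothesis~\ref{hyp:ClassicalCoeff}: the matrix $x_da$ is uniformly elliptic with constant $\delta R^*/2$ by \eqref{eq:NonDegeneracyNearBoundary} (for $x_d\le 2$) and \eqref{eq:NonDegeneracyInterior} (for $x_d\ge 2$), one has $\bar c=c\le K$, and the $\rho$-H\"older and sup bounds follow from \eqref{eq:LocalHolderS} together with Remark~\ref{rmk:EquivalenceMetric} on $\{R^*/2\le x_d\le 2\}$ and from \eqref{eq:GlobalHolderRho} on $\{x_d\ge 2\}$; hence Proposition~\ref{prop:ClassicalEstVarCoeffLocal} bounds $\|u\|_{C^{2+\alpha}_\rho(\bar Q_{R,T}(x^0))}$ by the analogous local right-hand side, with no weight since the coefficients are bounded here.

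\emph{Patching, gluing, and absorption.} Taking suprema over the two families of centers and patching --- a pair of points lying in a common ball is handled by the local estimate, while for a pair whose cycloidal (resp.\ Euclidean) separation is bounded below, which necessarily occurs once the points are far apart spatially, the difference quotient is controlled by $2\|u\|_{C(\overline{\HH}_T)}$ and likewise for the derivatives --- yields $\|u\|_{C^{2+\alpha}_s(\{x_d\le 3R^*/4\}\cap\overline{\HH}_T)}$ and $\|u\|_{C^{2+\alpha}_\rho(\{x_d\ge 5R^*/8\}\cap\overline{\HH}_T)}$, each bounded by $C(\|Lu\|_{\sC^{\alpha}(\overline{\HH}_T)}+\|u(0,\cdot)\|_{\sC^{2+\alpha}(\overline{\HH})}+\|u\|_{C(\overline{\HH}_T)})$. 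Since $\|u\|_{\sC^{2+\alpha}(\overline{\HH}_T)}=\|u\|_{C^{2+\alpha}_s(\overline{\HH}_{1,T})}+\|u\|_{C^{2+\alpha}_\rho(\overline{\HH}_T\less\HH_{1,T})}$ and the two metrics are equivalent on $\{3R^*/4\le x_d\le 1\}$ by Remark~\ref{rmk:EquivalenceMetric}, these bounds combine, by a further routine covering argument, to show $u\in\sC^{2+\alpha}(\overline{\HH}_T)$ with $\|u\|_{\sC^{2+\alpha}(\overline{\HH}_T)}\le C(\|Lu\|_{\sC^{\alpha}(\overline{\HH}_T)}+\|u(0,\cdot)\|_{\sC^{2+\alpha}(\overline{\HH})}+\|u\|_{C(\overline{\HH}_T)})$. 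Now Proposition~\ref{prop:MaximumPrinciple}(a) applies: under Hypothesis~\ref{hyp:CoeffBounded} the coefficients $x_da^{ij},b^i,c$ are bounded on $\overline{\HH}_T$, so \eqref{eq:CoeffMaxPrincGrowth} holds, while $u$ obeys \eqref{eq:CondUMaxPrinc1} because $u\in\sC^{2+\alpha}_{\loc}(\overline{\HH}_T)$ and \eqref{eq:CondUMaxPrinc2} by Lemma~\ref{lem:PropSecondOrderDeriv}; hence $\|u\|_{C(\overline{\HH}_T)}\le e^{KT}(T\|Lu\|_{C(\overline{\HH}_T)}+\|u(0,\cdot)\|_{C(\overline{\HH})})\le C(\|Lu\|_{\sC^{\alpha}(\overline{\HH}_T)}+\|u(0,\cdot)\|_{\sC^{2+\alpha}(\overline{\HH})})$, and substituting gives \eqref{eq:GlobalEstBoundedCoeff} when $T\le R^*$. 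For general $T$, I partition $[0,T]$ into $N=\lceil T/R^*\rceil$ slabs of length $\tau=T/N\le R^*$ and apply the case just proved on each slab to the corresponding time-translate of $u$, whose initial datum is the trace of $u$ at the left endpoint of the slab and has $\sC^{2+\alpha}(\overline{\HH})$ norm at most the $\sC^{2+\alpha}$ norm of $u$ on the preceding slab; solving the resulting recursion in the slab norms and summing over the $N$ slabs produces \eqref{eq:GlobalEstBoundedCoeff} with $C=C(T,\alpha,d,K_2,\delta,\nu)$.

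The main obstacle is the localization/patching: making the global cycloidal-metric seminorms over the half-space genuinely controlled by the local $C^{2+\alpha}_s$ and $C^{2+\alpha}_\rho$ estimates --- which needs care both for widely separated points and across the transition region $\{x_d\sim R^*\}$ between the two metrics --- and verifying that $L$ really does satisfy Hypothesis~\ref{hyp:ClassicalCoeff} on the interior balls, where the ellipticity constant of $x_da$ degenerates as $x_d\downarrow 0$ but stays bounded below on $\{x_d\ge R^*/2\}$.
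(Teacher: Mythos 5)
Your proposal is correct and follows essentially the same strategy as the paper's proof: cover $\overline{\HH}_{R^*/2,T}$ by boundary balls on which Theorem~\ref{thm:AprioriBoundaryEst} applies and the remainder by interior balls on which Proposition~\ref{prop:ClassicalEstVarCoeffLocal} applies after checking Hypothesis~\ref{hyp:ClassicalCoeff}, combine the local bounds using the metric equivalence of Remark~\ref{rmk:EquivalenceMetric}, and absorb $\|u\|_{C(\overline{\HH}_T)}$ via Proposition~\ref{prop:MaximumPrinciple}(a). The paper states the reduction to small $T$ without proof and glosses over the patching step, both of which you spell out more explicitly (the time-slab iteration and the far-apart-points argument), but the underlying argument is the same.
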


\begin{proof}
We shall apply a covering argument with the aid of Theorem \ref{thm:AprioriBoundaryEst} and Proposition \ref{prop:ClassicalEstVarCoeffLocal}. It is enough to prove the statement for $T>0$ small. Let $R^*>0$ be defined as in Theorem \ref{thm:AprioriBoundaryEst} and choose $T \in (0,R^*]$. Let $\{z^k: k\geq 1\}$ be a sequence of points in $\partial\HH$ such that
\begin{equation}
\label{eq:Cover1}
\HH_{R^*/2,T} \subset \bigcup_{k \geq 1}  Q_{R^*,T}(z^k),
\end{equation}
and let $\{w^l: l\geq 1\}$ be a sequence of points in $\HH_T \less \HH_{R^*/2, T}$ such that
\begin{equation}
\label{eq:Cover2}
\HH_T \less \HH_{R^*/2, T} \subset \bigcup_{l \geq 1}  Q_{R^*/8,T}(w^l),
\end{equation}
and assume
\begin{equation}
\label{eq:Cover3}
 Q_{R^*/4,T}(w^l) \cap \HH_{R^*/4,T} = \emptyset, \quad \forall\, l \geq 1.
\end{equation}
We apply the a priori boundary estimate \eqref{eq:AprioriBoundaryEst} to $u$ with $R=R^*$, $f=Lu$ and $g=u(0,\cdot)$ on $Q_{R^*,T}(z^k)$. Then, we can find a positive constant $C_1$, depending at most on $R^*$, $K_2$, $\delta$, $\nu$, such that
\begin{equation*}
\begin{aligned}
\|u\|_{C^{2+\alpha}_{s}(\bar Q_{R^*,T}(z^k))}
&\leq
C_1 \left(  \|Lu\|_{C^{\alpha}_{s}(\bar Q_{3R^*/2,T}(z^k))} +  \|u(0,\cdot)\|_{C^{2+\alpha}_{s}(\bar Q_{3R^*/2,T}(z^k))} \right.
\\
&\quad + \left. \|u\|_{C(\bar Q_{3R^*/2,T}(z^k))}  \right).
\end{aligned}
\end{equation*}
Using definitions \eqref{eq:DefinitionX_Alpha_q_T} of $\sC^{\alpha}(\overline{\HH}_T)$, and \eqref{eq:DefinitionX_Two_Aplha_q_T} of $\sC^{2+\alpha}(\overline{\HH})$, with $q=0$, Remark \ref{rmk:EquivalenceMetric} and the hypotheses that $Lu \in \sC^{\alpha}(\overline{\HH}_T)$ and $u(0,\cdot)\in \sC^{2+\alpha}(\overline{\HH})$, we obtain
\begin{equation*}
\begin{aligned}
\|u\|_{C^{2+\alpha}_{s}(\bar Q_{R^*,T}(z^k))}
&\leq
C_1 \left(  \|Lu\|_{\sC^{\alpha}(\overline{\HH}_T)} +  \|u(0,\cdot)\|_{\sC^{2+\alpha}(\overline{\HH})}
+  \|u\|_{C(\overline{\HH}_T)}  \right),
\end{aligned}
\end{equation*}
and  inequality \eqref{eq:MaximumPrinciple1} ensures
\begin{equation}
\label{eq:GlobalEstBoundedCoeff1}
\begin{aligned}
\|u\|_{C^{2+\alpha}_{s}(\bar Q_{R^*,T}(z^k))}
&\leq C_1 \left(  \|Lu\|_{\sC^{\alpha}(\overline{\HH}_T)} +  \|u(0,\cdot)\|_{\sC^{2+\alpha}(\overline{\HH})} \right), \quad \forall\, k \geq 1.
\end{aligned}
\end{equation}
From our Hypothesis \ref{hyp:CoeffBounded}, the coefficients $x_d a^{ij}$, $b^i$ and $c$ are in $C^{\alpha}_{\rho}(\overline{\HH_T\less\HH_{2,T}})$. By Assumption \ref{assump:Coeff}, we have that $x_d a^{ij}$, $b^i$ and $c$ are in $C^{\alpha}_{s}(\overline{\HH_{2,T}\less\HH_{R^*/4,T}})$. Since the distance functions $s$ and $\rho$ are equivalent on $\RR\times[R^*/4,2]$, by Remark \ref{rmk:EquivalenceMetric}, there is a positive constant $K_1$, depending on $K_2$ and $R^*$, such that
\[
\|x_da^{ij}\|_{C^{\alpha}_{\rho}(\overline{\HH_T\less\HH_{R^*/4,T}})}+
\|b^{i}\|_{C^{\alpha}_{\rho}(\overline{\HH_T\less\HH_{R^*/4,T}})}+
\|c\|_{C^{\alpha}_{\rho}(\overline{\HH_T\less\HH_{R^*/4,T}})} \leq K_1,
\]
and so the conditions of Hypothesis \ref{hyp:ClassicalCoeff} are obeyed on $\overline{\HH_T\less\HH_{R^*/4,T}}$. This is enough to ensure that we may apply Proposition \ref{prop:ClassicalEstVarCoeffLocal} to $u$ with $f=Lu$ and $g=u(0, \cdot)$  on $Q_{R^*/8,T}(w^l)$ and so there is a positive constant $C_2$, depending at most on $R^*$, $K_1$, $\delta$, $\nu$, giving
\begin{equation*}
\begin{aligned}
\|u\|_{C^{2+\alpha}_{\rho}(\bar Q_{R^*/8,T}(w^l))}
&\leq
C_2 \left(  \|Lu\|_{C^{\alpha}_{\rho}(\bar Q_{R^*/4,T}(w^l))} +  \|u(0,\cdot)\|_{C^{2+\alpha}_{\rho}(\bar Q_{R^*/4,T}(w^l))} \right.
\\
&\quad + \left. \|u\|_{C(\bar Q_{R^*/4,T}(w^l))} \right), \quad \forall\, l \geq 1.
\end{aligned}
\end{equation*}
By \eqref{eq:Cover3} and Remark \ref{rmk:EquivalenceMetric}, we obtain
\begin{equation*}
\begin{aligned}
\|u\|_{C^{2+\alpha}_{\rho}(\bar Q_{R^*/8,T}(w^l))}
&\leq
C_2 \left(  \|Lu\|_{\sC^{\alpha}(\overline{\HH})} +  \|u(0,\cdot)\|_{\sC^{2+\alpha}(\overline{\HH}_T)} \right.
\\
&\quad + \left. \|u\|_{C(\bar Q_{R^*/4,T}(w^l))} \right), \quad \forall\, l \geq 1,
\end{aligned}
\end{equation*}
and, by inequality \eqref{eq:MaximumPrinciple1} applied to $\|u\|_{C(\bar Q_{R^*/4,T}(w^l))}$, it follows that
\begin{equation}
\label{eq:GlobalEstBoundedCoeff2}
\begin{aligned}
\|u\|_{C^{2+\alpha}_{\rho}(\bar Q_{R^*/8,T}(w^l))}
&\leq C_2 \left(  \|Lu\|_{\sC^{\alpha}(\overline{\HH})} +  \|u(0,\cdot)\|_{\sC^{2+\alpha}(\overline{\HH}_T)}  \right), \quad \forall\, l \geq 1.
\end{aligned}
\end{equation}
Combining inequalities \eqref{eq:GlobalEstBoundedCoeff1} and \eqref{eq:GlobalEstBoundedCoeff2} and making use of the inclusions \eqref{eq:Cover1} and \eqref{eq:Cover2},
we obtain the global estimate \eqref{eq:GlobalEstBoundedCoeff}.
\end{proof}

Next, we establish the a priori global estimates in the case of coefficients with at most linear growth.

\begin{lem}[Global estimates for coefficients with linear growth]
\label{lem:GlobalEst}
There exists a positive constant $C=C(T,\alpha,d,K,\delta,\nu)$ such that for any solution $u \in \sC^{2+\alpha}_{\loc}(\overline{\HH}_T)$ to \eqref{eq:Problem}, such that $Lu \in \sC^{\alpha}_p(\overline{\HH}_T)$ and $u(0, \cdot) \in \sC^{2+\alpha}_p(\overline{\HH})$, we have
\begin{equation}
\label{eq:GlobalEstimatePrim}
\begin{aligned}
\|u\|_{\sC^{2+\alpha}(\overline{\HH}_T)}
&\leq C \left(  \|Lu\|_{\sC^{\alpha}_p(\overline{\HH}_T)} +  \|u(0, \cdot)\|_{\sC^{2+\alpha}_p(\overline{\HH})} \right),
\end{aligned}
\end{equation}
where $p=p(\alpha)$ is the constant appearing in Proposition \ref{prop:InteriorEst}.
\end{lem}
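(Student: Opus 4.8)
The plan is to follow the covering argument of Lemma~\ref{lem:GlobalEstBoundedCoeff}, using the \emph{weighted} interior estimate of Proposition~\ref{prop:InteriorEst} in place of Proposition~\ref{prop:ClassicalEstVarCoeffLocal} away from $\partial\HH$, and part~(b) rather than part~(a) of Proposition~\ref{prop:MaximumPrinciple} to control the zeroth-order term. As in Lemma~\ref{lem:GlobalEstBoundedCoeff} it suffices to treat $T>0$ small, the general case following by partitioning $[0,T]$ into subintervals of fixed small length and iterating. Fix $R^*$ as in Theorem~\ref{thm:AprioriBoundaryEst}, shrinking it if necessary so that $3R^*/2\le 1$, and take $T\in(0,R^*]$.

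Next I would fix a cover analogous to the one in Lemma~\ref{lem:GlobalEstBoundedCoeff}: points $\{z^k\}\subset\partial\HH$ with $\HH_{3R^*/4,T}\subset\bigcup_k Q_{R^*,T}(z^k)$, and a radius $R_0\in(0,R^*/4)$ together with points $\{w^l\}\subset\HH$ satisfying $w^l_d-2R_0\ge R^*/2$ and $\HH_T\less\HH_{3R^*/4,T}\subset\bigcup_l Q_{R_0,T}(w^l)$. Then $\{Q_{R^*,T}(z^k)\}\cup\{Q_{R_0,T}(w^l)\}$ covers $\overline{\HH}_T$; each $\bar Q_{3R^*/2,T}(z^k)$ lies in $\overline{\HH}_{1,T}$ (since $3R^*/2\le 1$), and each doubled ball $\bar Q_{2R_0,T}(w^l)$ lies in $\overline{\HH}_T\less\HH_{R^*/2,T}$, where the cycloidal and Euclidean distances are equivalent (Remark~\ref{rmk:EquivalenceMetric}) and Proposition~\ref{prop:InteriorEst} is applicable.

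On the boundary balls I would apply Theorem~\ref{thm:AprioriBoundaryEst} with $R=R^*$, $f=Lu$, $g=u(0,\cdot)$. On $\bar Q_{3R^*/2,T}(z^k)\subset\overline{\HH}_{1,T}$ the weight $(1+|x|)^p$ is comparable to $(1+|z^k|)^p\ge 1$ with constants depending only on $R^*$ and $p$ --- this is the elementary estimate behind Claim~\ref{claim:PassToNewSpaces} --- so $\|Lu\|_{C^{\alpha}_s(\bar Q_{3R^*/2,T}(z^k))}\le c(1+|z^k|)^{-p}\|Lu\|_{\sC^{\alpha}_p(\overline{\HH}_T)}\le c\|Lu\|_{\sC^{\alpha}_p(\overline{\HH}_T)}$, similarly for the $g$-term, and $\|u\|_{C(\bar Q_{3R^*/2,T}(z^k))}\le\|u\|_{\sC^0_p(\overline{\HH}_T)}$. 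On the interior balls I would apply Proposition~\ref{prop:InteriorEst} with $R=R_0$; its right-hand side is already expressed in $\sC^{\alpha}_p$, $\sC^{2+\alpha}_p$ and $\sC^0_p$ over $\bar Q_{2R_0,T}(w^l)$ and $\bar B_{2R_0}(w^l)$, hence bounded by the corresponding global weighted norms. The leftover term $\|u\|_{\sC^0_p(\overline{\HH}_T)}$ appearing in both families is then estimated by Proposition~\ref{prop:MaximumPrinciple}(b): the hypotheses of Lemma~\ref{lem:MaximumPrinciple} hold because $(a^{ij})$ is non-negative definite, $b^d\ge\nu>0$ on $\{x_d=0\}$, $c\le K$, and \eqref{eq:LinearGrowth} implies the growth bound \eqref{eq:CoeffMaxPrincGrowth} after enlarging $K$, while \eqref{eq:CondUMaxPrinc1}--\eqref{eq:CondUMaxPrinc2} follow from $u\in\sC^{2+\alpha}_{\loc}(\overline{\HH}_T)$ and Lemma~\ref{lem:PropSecondOrderDeriv}; thus $\|u\|_{\sC^0_p(\overline{\HH}_T)}\le e^{(1+p(p+4)K)T}\left(\|Lu\|_{\sC^0_p(\overline{\HH}_T)}+\|u(0,\cdot)\|_{\sC^0_p(\overline{\HH})}\right)$, which is absorbed into the right-hand side using $\sC^{\alpha}_p\hookrightarrow\sC^0_p$, $\sC^{2+\alpha}_p\hookrightarrow\sC^0_p$ and $T\le R^*$.

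Taking the supremum over $k$ and $l$ gives a uniform bound on the local $C^{2+\alpha}_s$- and $C^{2+\alpha}_\rho$-norms of $u$, in particular $u\in\sC^{2+\alpha}(\overline{\HH}_T)$, and these recombine into the global bound \eqref{eq:GlobalEstimatePrim}; the standard point that a Hölder seminorm over $\overline{\HH}_T$ is not literally the supremum of the seminorms over the cover is handled as usual (pairs of points at cycloidal or Euclidean distance below a threshold lie in a common ball, pairs farther apart are estimated by $\|u\|_{C(\overline{\HH}_T)}$). The step I expect to be the main obstacle is the bookkeeping that links the local weighted estimates to the global ones: near $\partial\HH$ one must use that on a fixed-radius ball about a boundary point $(1+|x|)^p$ is comparable to $(1+|z^k|)^p$ and, since $p\ge 0$, the factor $(1+|z^k|)^{-p}$ is harmless (which is exactly why Proposition~\ref{prop:InteriorEst} is formulated with weighted norms on doubled balls), and one must choose the radii so that the clearance $x^0_d-2R\ge R^*/2$ required by Proposition~\ref{prop:InteriorEst} and the constraint $T\le R\le R^*$ required by Theorem~\ref{thm:AprioriBoundaryEst} hold simultaneously while the two families of balls still cover $\overline{\HH}_T$.
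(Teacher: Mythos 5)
Your proposal follows the same strategy as the paper: a covering argument combining Theorem \ref{thm:AprioriBoundaryEst} near $\partial\HH$ (with the weight comparison of Claim \ref{claim:PassToNewSpaces} converting $C^\alpha_s$ to $\sC^\alpha_p$ norms), Proposition \ref{prop:InteriorEst} on interior balls, and Proposition \ref{prop:MaximumPrinciple}(b) to absorb the zeroth-order $\|u\|_{\sC^0_p}$ term. The minor additional care you take about the thresholds (ensuring $3R^*/2\le 1$, specifying $w^l_d-2R_0\ge R^*/2$) and the closing remark on gluing local H\"older seminorms are correct and consistent with what the paper implicitly relies on.
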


\begin{proof}
We shall apply a covering argument with the aid of Theorem \ref{thm:AprioriBoundaryEst} and Proposition \ref{prop:InteriorEst}. As in the proof of Lemma \ref{lem:GlobalEstBoundedCoeff}, we may assume without loss of generality that  $0<T\leq R^*$, where $R^*>0$ is defined as in Theorem \ref{thm:AprioriBoundaryEst}. Let $\{z^k\}$ and $\{w^l\}$ be the sequences of points considered in the proof of Lemma \ref{lem:GlobalEstBoundedCoeff}. Then, by applying Theorem \ref{thm:AprioriBoundaryEst} to $u$ with $f=Lu$ and $g=u(0,\cdot)$ on $\bar Q_{R^*,T}(z^k)$,
we obtain, for all $k \geq 1$,
\begin{equation*}
\begin{aligned}
\|u\|_{C^{2+\alpha}_s(\bar Q_{R^*,T}(z^k))}
& \leq C \left(  \|Lu\|_{C^{\alpha}_s(\bar Q_{3R^*/2,T}(z^k))} +  \|u(0,\cdot)\|_{C^{2+\alpha}_s(\bar B_{3R^*/2}(z^k))}\right.\\
&\qquad \left.+ \|u\|_{C(\bar Q_{3R^*/2,T}(z^k))} \right).
\end{aligned}
\end{equation*}
We notice that
\begin{equation*}
\begin{aligned}
\|Lu\|_{C^{\alpha}_s(\bar Q_{3R^*/2,T}(z^k))}
& \leq  C_1\|(1+|x|)^p Lu\|_{C^{\alpha}_s(\bar Q_{3R^*/2,T}(z^k))}\\
& =  C_1\|Lu\|_{\sC^{\alpha}_p(\bar Q_{3R^*/2,T}(z^k))}, \\
\|u(0,\cdot)\|_{C^{2+\alpha}_{\rho}(\bar B_{3R^*/2}(z^k))}
& \leq  C_1\|(1+|x|)^p u(0,\cdot)\|_{C^{2+\alpha}_s(\bar B_{3R^*/2}(z^k))}\\
& =  C_1\|u(0,\cdot)\|_{\sC^{2+\alpha}_p(\bar B_{3R^*/2}(z^k))}, \\
\|u\|_{C(\bar Q_{3R^*/2,T}(z^k))}
& \leq  C_1 \|(1+|x|)^p u\|_{C(\bar Q_{3R^*/2,T}(z^k))}\\
& =  C_1 \|u\|_{\sC^0_p(\bar Q_{3R^*/2,T}(z^k))},
\end{aligned}
\end{equation*}
where the positive constant $C_1$ depends on $R^*$ and $p$, but not on $z^k$. Therefore, we obtain, for all $k \geq 1$,
\begin{equation*}
\begin{aligned}
\|u\|_{C^{2+\alpha}_s(\bar Q_{R^*,T}(z^k))}
&\leq C_2 \left(  \|Lu\|_{\sC^{\alpha}_p(\overline{\HH}_T)} +  \|u(0, \cdot)\|_{\sC^{2+\alpha}_p(\overline{\HH})} + \|u\|_{\sC^0_p(\overline{\HH}_T)}\right),
\end{aligned}
\end{equation*}
for a positive constant $C_2$ depending at most on $R^*$, $K$, $\delta$, $\nu$, $\alpha$, $d$. Because the collection of balls $\{Q_{R^*,T}(z^k): k \geq 1\}$ covers $\HH_{R^*/2,T}$ and as we may apply \eqref{eq:MaximumPrinciple3} to $u$ with $f=Lu$ and $g=u(0,\cdot)$ with $q=p$,
there is a positive constant $C_3$, satisfying the same dependency on constants as $C_2$,
such that
\begin{equation}
\label{eq:GlobalEst1}
\begin{aligned}
\|u\|_{\sC^{2+\alpha}(\overline{\HH}_{R^*/2,T})}
&\leq C_3 \left(  \|Lu\|_{\sC^{\alpha}_p(\overline{\HH}_T)} +  \|u(0, \cdot)\|_{\sC^{2+\alpha}_p(\overline{\HH})}  \right).
\end{aligned}
\end{equation}
By applying Proposition \ref{prop:InteriorEst} to $u$ with $f=Lu$ and $g=u(0,\cdot)$ on $\bar Q_{R^*/8, T}(w^l)$ , we obtain, for all $l \geq 1$,
\begin{equation}
\label{eq:GlobalEst2}
\begin{aligned}
\|u\|_{C^{2+\alpha}_{\rho}(\bar Q_{R^*/8,T}(w^l))}
&\leq C_4 \left(  \|Lu\|_{\sC^{\alpha}_p(\bar Q_{R^*/4,T}(w^l))} +  \|u(0, \cdot)\|_{\sC^{2+\alpha}_p(\bar B_{R^*/4}(w^l))} \right.
\\
&\quad + \left. \|u\|_{\sC^{0}_p(\bar Q_{R^*/4,T}(w^l))} \right).
\end{aligned}
\end{equation}
Because the collection of balls $\{Q_{R^*/8,T}(w^l): l \geq 1\}$ covers $\HH_T\less\HH_{R^*/2,T}$ and we may apply \eqref{eq:MaximumPrinciple3} to $u$ with $f=Lu$ and $g=u(0,\cdot)$ with $q=p$,
we obtain
\begin{equation}
\label{eq:GlobalEst3}
\begin{aligned}
\|u\|_{\sC^{2+\alpha}(\overline{\HH_T\less\HH_{R^*/2,T}})}
&\leq C_5 \left(  \|Lu\|_{\sC^{\alpha}_p(\overline{\HH}_T)} +  \|u(0, \cdot)\|_{\sC^{2+\alpha}_p(\overline{\HH})} \right).
\end{aligned}
\end{equation}
By combining inequalities \eqref{eq:GlobalEst1} and \eqref{eq:GlobalEst3}, we obtain the desired estimate \eqref{eq:GlobalEstimatePrim}.
\end{proof}

Next, we prove Theorem \ref{thm:MainExistenceUniquenessPDE} in the case of bounded coefficients.

\begin{prop} [Existence and uniqueness for bounded coefficients]
\label{prop:ExistenceUniquenessBoundedCoeff}
Suppose Hypothesis \ref{hyp:CoeffBounded} is satisfied. Let $f \in \sC^{\alpha}(\overline{\HH}_T)$ and $g \in \sC^{2+\alpha}(\overline{\HH})$. Then there exists a unique solution $u \in \sC^{2+\alpha}(\overline{\HH}_T)$ to \eqref{eq:Problem} and $u$ satisfies estimate \eqref{eq:GlobalEstBoundedCoeff}.
\end{prop}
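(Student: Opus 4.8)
The plan is to establish uniqueness first and then existence via the method of continuity, using the global a priori estimate \eqref{eq:GlobalEstBoundedCoeff} from Lemma \ref{lem:GlobalEstBoundedCoeff} as the key tool. For uniqueness, suppose $u_1, u_2 \in \sC^{2+\alpha}(\overline{\HH}_T)$ both solve \eqref{eq:Problem}; then $w := u_1 - u_2$ solves $Lw = 0$ with $w(0,\cdot) = 0$. Since the coefficients under Hypothesis \ref{hyp:CoeffBounded} are bounded, condition \eqref{eq:CoeffMaxPrincTraceGrowth} (indeed the stronger \eqref{eq:CoeffMaxPrincGrowth}) is satisfied, and because $w \in \sC^{2+\alpha}(\overline{\HH}_T)$ we have by Lemma \ref{lem:PropSecondOrderDeriv} that $x_d w_{x_ix_j} = 0$ on $(0,T]\times\partial\HH$, so the hypotheses \eqref{eq:CondUMaxPrinc1} and \eqref{eq:CondUMaxPrinc2} of the maximum principle (Lemma \ref{lem:MaximumPrinciple}) hold; applying Corollary \ref{cor:Comparison} in both directions forces $w \equiv 0$.

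For existence, I would use the method of continuity applied to the family of operators $L_\tau := (1-\tau)L_{\mathrm{model}} + \tau L$ for $\tau \in [0,1]$, where $L_{\mathrm{model}}$ is a convenient reference operator — for instance the Daskalopoulos-Hamilton model operator \eqref{eq:DHModel} modified near $x_d = \infty$ to have bounded coefficients and matching the structural conditions of Assumption \ref{assump:Coeff} and Hypothesis \ref{hyp:CoeffBounded}, for which solvability of \eqref{eq:Problem} in $\sC^{2+\alpha}$ is known (this is essentially \cite[Theorem I.1.1]{DaskalHamilton1998} together with the interior theory away from the boundary). Each $L_\tau$ has coefficients satisfying Hypothesis \ref{hyp:CoeffBounded} with uniform constants (the ellipticity constants, the bound on $c$, and the H\"older norms all interpolate linearly), so Lemma \ref{lem:GlobalEstBoundedCoeff} gives a uniform a priori estimate
\begin{equation*}
\|u\|_{\sC^{2+\alpha}(\overline{\HH}_T)} \leq C\left(\|L_\tau u\|_{\sC^{\alpha}(\overline{\HH}_T)} + \|u(0,\cdot)\|_{\sC^{2+\alpha}(\overline{\HH})}\right)
\end{equation*}
with $C$ independent of $\tau$. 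The map $\tau \mapsto L_\tau$ is continuous (indeed affine) from $[0,1]$ into the space of bounded linear operators $\sC^{2+\alpha}(\overline{\HH}_T) \to \sC^{\alpha}(\overline{\HH}_T) \times \sC^{2+\alpha}(\overline{\HH})$, $u \mapsto (L_\tau u, u(0,\cdot))$; here the point is that \emph{with bounded coefficients} this is a genuine well-defined bounded operator, which is exactly why the reduction to Hypothesis \ref{hyp:CoeffBounded} was made. The standard method-of-continuity lemma then propagates surjectivity from $\tau = 0$ to $\tau = 1$, yielding a solution $u \in \sC^{2+\alpha}(\overline{\HH}_T)$ to \eqref{eq:Problem} for the given $f \in \sC^{\alpha}(\overline{\HH}_T)$ and $g \in \sC^{2+\alpha}(\overline{\HH})$; the estimate \eqref{eq:GlobalEstBoundedCoeff} for this $u$ is then immediate from Lemma \ref{lem:GlobalEstBoundedCoeff} itself.

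The main obstacle is the base case of the continuity argument: one needs \emph{some} operator in the family for which existence in $\sC^{2+\alpha}(\overline{\HH}_T)$ is already available, and arranging that the reference operator simultaneously (i) is covered by the existing Daskalopoulos-Hamilton / Koch theory near $x_d = 0$, (ii) has globally bounded, uniformly H\"older coefficients so Hypothesis \ref{hyp:CoeffBounded} holds, and (iii) lies in a connected family with the target $L$ through operators all satisfying these conditions with uniform constants, requires some care — in particular one must check that the boundary condition $b^d(t,x',0) \geq \nu$ and the near-boundary ellipticity \eqref{eq:NonDegeneracyNearBoundary} are preserved along the whole path, which holds since these are convex conditions. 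An alternative to invoking an external base case is to first solve the constant-coefficient problem via Proposition \ref{prop:ConstantCoeff} (which gives existence for the frozen model operator $L_0$ in $\sC^{2+\alpha}$) and then run the continuity method from $L_0$ to $L$ over the single chart, but because the coefficients are globally bounded here the cleaner route is to connect directly to a model operator whose solvability is recorded in Appendix \ref{app:ExistenceUniquenessDegenParabolicPDEConstantCoefficients}; either way the a priori estimate \eqref{eq:GlobalEstBoundedCoeff} does all the heavy lifting once the base case is in hand.
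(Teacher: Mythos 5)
Your uniqueness argument is fine and matches the paper's approach (which routes uniqueness through the maximum principle, Proposition \ref{prop:MaximumPrinciple}). The existence argument, however, is where the proposal and the paper part ways, and your proposal has a genuine gap at precisely the point you flag as the ``main obstacle'': the base case of the method of continuity is not actually available from the paper's toolkit, and neither of your two suggestions for supplying it goes through.

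First, the alternative you mention --- running the continuity method from the frozen constant-coefficient operator $L_0$ of Proposition \ref{prop:ConstantCoeff} --- fails for two structural reasons. (i) Proposition \ref{prop:ConstantCoeff} requires $f$ and $g$ to be \emph{compactly supported}; it does not give a solution for arbitrary $f\in\sC^{\alpha}(\overline\HH_T)$ and $g\in\sC^{2+\alpha}(\overline\HH)$, which is what the base case of a global method of continuity needs. (ii) The constant-coefficient operator $L_0$ in \eqref{eq:ConstantCoeffGenDH} has second-order coefficients $x_da^{ij}$ that grow linearly in $x_d$, so $L_0$ does \emph{not} satisfy Hypothesis \ref{hyp:CoeffBounded}, and $L_0:\sC^{2+\alpha}(\overline\HH_T)\to\sC^{\alpha}(\overline\HH_T)$ is not a bounded operator. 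It therefore cannot sit at $\tau=0$ of a segment inside the class of operators to which Lemma \ref{lem:GlobalEstBoundedCoeff} applies with uniform constants; the linear interpolation $L_\tau$ would have unbounded coefficients for every $\tau<1$. Second, the suggestion to use a Daskalopoulos--Hamilton model operator modified near $x_d=\infty$ and justify its solvability by citing \cite[Theorem I.1.1]{DaskalHamilton1998} ``together with the interior theory away from the boundary'' is circular: patching a boundary-layer solvability statement (for compactly supported data, in $C^{2+\alpha}_s$) together with interior parabolic theory into a global existence statement for the full initial-value problem on $\overline\HH_T$ in the space $\sC^{2+\alpha}(\overline\HH_T)$ \emph{is} the content of the proposition you are trying to prove. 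You would be assuming your base case by a hand-wave of the same difficulty as the result itself.

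The paper avoids the need for a global base case entirely. It fixes $g=0$ (absorbing $g$ using that $Lg\in\sC^{\alpha}$ under Hypothesis \ref{hyp:CoeffBounded}), chooses a finite-overlap cover of $\overline\HH$ and a subordinate partition of unity $\{\varphi_n\}$ with companion cutoffs $\{\psi_n\}$, takes $M_n$ to be the local solution operator for the frozen constant-coefficient degenerate operator from Proposition \ref{prop:ConstantCoeff} near the boundary (where compact support is guaranteed by the cutoff), and $M_0$ the solution operator of a uniformly parabolic operator with bounded coefficients in the interior via \cite[Theorem 8.9.2]{Krylov_LecturesHolder}. It then defines the parametrix $M f:=\sum_n\varphi_n M_n\psi_n f$ and shows, using the commutator identity \eqref{eq:ConstantCoeff2}, the freezing estimate in Claim \ref{claim:LocalBoundaryClaim1}, the interpolation inequalities (Lemma \ref{lem:InterpolationIneqS}), and the maximum principle, that $\|LM-I\|<1$ once the cover radius $r$, the interpolation parameter $\eps$, and the time horizon $T$ are chosen small in that order. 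Invertibility of $LM$ by Neumann series then gives a right inverse of $L$, hence existence, and the a priori estimate \eqref{eq:GlobalEstBoundedCoeff} is then applied to the constructed solution. The essential point is that the local solution operators $M_n$ are each available from results already in hand (because the cutoffs force compact support and localize the coefficients), whereas a global base-case operator for the method of continuity is not. If you want to repair your approach, you would have to first prove a version of this proposition for one fixed reference operator --- and the natural way to do that is exactly the parametrix construction the paper runs for general $L$ under Hypothesis \ref{hyp:CoeffBounded}, so the method of continuity buys nothing here.
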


\begin{proof}
The proof employs the method used in proving existence of solutions to parabolic partial differential equations outlined in  \cite[\S 10.2]{Krylov_LecturesHolder} or \cite[Theorem II.1.1]{DaskalHamilton1998}. We let $\hat \sC^{2+\alpha}(\overline{\HH}_T)$ denote the Banach space of functions $u\in \sC^{2+\alpha}(\overline{\HH}_T)$ such that $u(0,x)=0$, for all $x \in \overline{\HH}$. The spaces $\hat C^{2+\alpha}_s(\overline{\HH}_T)$ and $\hat C^{2+\alpha}_{\rho}([0,T]\times\RR^d)$ are defined similarly. Without loss of generality, we may assume $g=0$ because $Lg \in \sC^{\alpha}(\overline{\HH}_T)$, when Hypothesis \ref{hyp:CoeffBounded} holds, and so
\[
L:\hat \sC^{2+\alpha}(\overline{\HH}_T) \rightarrow \sC^{\alpha}(\overline{\HH}_T)
\]
is a well-defined operator. Our goal is to show that $L$ is invertible and we accomplish this by constructing a bounded linear operator $M:\sC^{\alpha}(\overline{\HH}_T) \rightarrow \hat \sC^{2+\alpha}(\overline{\HH}_T)$ such that
\begin{equation}
\label{eq:AlmostInverse}
\begin{aligned}
\left\|LM-I_{\sC^{\alpha}(\overline{\HH}_T)}\right\| <1.
\end{aligned}
\end{equation}
For this purpose, we fix $r>0$ and choose a sequence of points $\left\{x^n: n=1,2,\ldots\right\}$ such that the collection of balls $\left\{B_r(x^n): n=1,2,\ldots\right\}$ covers the strip $\{x=(x',x_d)\in\HH: 0<x_d<r/2 \}$. We may assume without loss of generality, that there exists a positive constant $N$, depending only on the dimension $d$, such that at most $N$ balls of the covering have non-empty intersection. Let $\left\{\varphi_n: n=0,1,\ldots\right\}$ be a partition of unity subordinate to the open cover
\[
\left(\HH\less\left\{0<x_d\leq r/4\right\}\right) \cup \bigcup_{n=1}^{\infty} B_r(x^n) = \HH,
\]
such that
\[
\supp \varphi_0 \subset \HH\less\left\{0<x_d < r/4\right\} \hbox{   and   }
\supp \varphi_n \subset \bar B_r(x^n), \quad \forall\, n \geq 1.
\]
Without loss of generality, we may choose $\{\varphi_n\}_{n \geq 0}$ such that there is a positive constant $c$, independent of $r$ and $n$, such that
\begin{equation}
\label{eq:Eq0}
\begin{aligned}
\|\varphi_n\|_{C^{2+\alpha}_{\rho}(\RR^d)} \leq c r^{-3}, \quad \forall\, n \geq 0.
\end{aligned}
\end{equation}
We choose a sequence of non-negative, smooth cutoff functions, $\{\psi_n\}_{n \geq 0}\subset C^\infty(\overline\HH)$, such that $0\leq \psi_n\leq 1$ on $\HH$, for all $n\geq 0$, and
$$
\psi_0(x) = \begin{cases} 0, &\hbox{for  } 0<x_d<r/8, \\ 1, &\hbox{for  } x_d>r/4,\end{cases}
$$
while for all $n \geq 1$,
$$
\psi_n(x) = \begin{cases} 1, &\hbox{for  } 0<x_d<1/2, \\ 0, &\hbox{for  } x_d>1.\end{cases}
$$
Then, we notice that $\psi_0$ satisfies \eqref{eq:Eq0}. For $r$ small enough, we have
\begin{equation}
\label{eq:Eq1}
\psi_n \varphi_n = \varphi_n, \hbox{   for all } n\geq 0.
\end{equation}
For $n=0$, let $L_0$ be a uniformly elliptic parabolic operator on $\RR^d$ with bounded, $C^{\alpha}_\rho(\HH_T)$-H\"older continuous coefficients, such that $L_0$ agrees with $L$ on the support of $\psi_0$. Define the operator
\begin{equation*}
\begin{aligned}
M_0: C^{\alpha}_{\rho}([0,T]\times\RR^d)\rightarrow \hat C^{2+\alpha}_{\rho}([0,T]\times\RR^d),
\end{aligned}
\end{equation*}
to be the inverse of $L_0$, as given by \cite[Theorem 8.9.2]{Krylov_LecturesHolder}. For $n=1,2,\ldots$, let $L_n$ be the degenerate-parabolic operator obtained by freezing the variable coefficients $a^{ij}(t,x)$, $b^i(t,x)$ and $c(t,x)$ at $(0,x^n)$. Define the operator
\begin{equation*}
\begin{aligned}
M_n: C^{\alpha}_{s}(\overline{\HH}_T)\rightarrow \hat C^{2+\alpha}_{s}(\overline{\HH}_T),
\end{aligned}
\end{equation*}
be the inverse of $L_n$, as given by Proposition \ref{prop:ConstantCoeff}. Define the operator
$$
M: \sC^{\alpha}(\overline{\HH}_T)\rightarrow \hat\sC^{2+\alpha}(\overline{\HH}_T)
$$
by setting
\begin{equation*}
\begin{aligned}
Mf := \sum_{n=0}^{\infty} \varphi_n M_n \psi_n f, \quad \hbox{for } f \in \sC^{\alpha}(\overline{\HH}_T).
\end{aligned}
\end{equation*}
Our goal is to show that \eqref{eq:AlmostInverse} holds, for small enough $r$ and $T$.  We have
\begin{equation*}
\begin{aligned}
LMf-f &= \sum_{n=0}^{\infty} L \varphi_n M_n \psi_n f - f\\
&= \sum_{n=0}^{\infty}  \varphi_n L M_n \psi_n f + \sum_{n=0}^{\infty} [L,\varphi_n] M_n \psi_n f - f,
\end{aligned}
\end{equation*}
where $[L,\varphi_n]$ is given by \eqref{eq:ConstantCoeff2}. Denoting
\begin{equation}
\label{eq:DefinitionSequenceUn}
u_n := M_n \psi_n f, \quad\hbox{for } n=0,1,2,\ldots,
\end{equation}
we have
\begin{equation*}
\begin{aligned}
L M_n \psi_n f &= (L-L_n) u_n + L_n M_n \psi_n f\\
&= (L-L_n) u_n + \psi_n f,
\end{aligned}
\end{equation*}
since $L_n M_n = I$, for all $n\geq 0$. This implies, by the identities \eqref{eq:Eq1} and $\sum_{n=0}^{\infty} \varphi_n \psi_n f =f$, that
\begin{equation}
\label{eq:Eq2}
\begin{aligned}
LMf-f &= \sum_{n=0}^{\infty}  \varphi_n (L-L_n) u_n + \sum_{n=0}^{\infty} [L,\varphi_n] u_n .
\end{aligned}
\end{equation}
First, we estimate the terms in the preceding equality indexed by $n=0$. Because $L_0=L$ on the support of $\psi_0$, obviously we have $\psi_0(L-L_0) u_0=0$. Next, using the identity \eqref{eq:ConstantCoeff2}, there is a positive constant $C$, depending only on $K_2$ in \eqref{eq:GlobalHolderRho},  such that
\begin{equation*}
\begin{aligned}
\left\|\left[L,\varphi_0\right]u_0\right\|_{C^{\alpha}_{\rho}([0,T]\times\RR^d)}
& \leq C \|u_0\|_{C^{1+\alpha}_{\rho}([0,T]\times\RR^d)} \|\psi_0\|_{C^{2+\alpha}_{\rho}([0,T]\times\RR^d)}\\
& \leq C r^{-3}\|u_0\|_{C^{1+\alpha}_{\rho}([0,T]\times\RR^d)} \quad\hbox{(by \eqref{eq:Eq0}).}
\end{aligned}
\end{equation*}
From the interpolation inequalities for standard H\"older spaces \cite[Theorem 8.8.1]{Krylov_LecturesHolder}, there is a positive constant $m$ such that, for all $\eps>0$, we have
\begin{equation}
\label{eq:Eq3}
\begin{aligned}
\left\|\left[L,\varphi_0\right]u_0\right\|_{C^{\alpha}_{\rho}([0,T]\times\RR^d)}
& \leq C r^{-3} \left(\eps \|u_0\|_{C^{1+\alpha}_{\rho}([0,T]\times\RR^d)} + \eps^{-m} \|u_0\|_{C([0,T]\times\RR^d)} \right).
\end{aligned}
\end{equation}
By \cite[Theorem 8.9.2]{Krylov_LecturesHolder}, the identity \eqref{eq:Eq1}, and the definition \eqref{eq:DefinitionSequenceUn} of $u_0$, we have
\begin{equation*}
\begin{aligned}
\|u_0\|_{C^{1+\alpha}_{\rho}([0,T]\times\RR^d)} &\leq C_1(r) \|\psi_0 f\|_{C^{\alpha}_{\rho}([0,T]\times\RR^d)}\\
&\leq C_1(r) \|f\|_{\sC^{\alpha}(\HH_T)},
\end{aligned}
\end{equation*}
for some positive constant $C_1(r)$. From \cite[Corollary 8.1.5]{Krylov_LecturesHolder}, there is a constant $C$, depending only on $K_2$, $T$ and $d$, such that
\[
\|u_0\|_{C([0,T]\times\RR^d)} \leq CT \|f\|_{C([0,T]\times\RR^d)}.
\]
Therefore, from \eqref{eq:Eq3} we obtain, for possibly a different constant $C_1(r)$,
\begin{equation}
\label{eq:Eq4}
\begin{aligned}
\left\|\left[L,\varphi_0\right]u_0\right\|_{C^{\alpha}_{\rho}([0,T]\times\RR^d)}
& \leq C_1(r) \left(\eps \|f\|_{\sC^{\alpha}(\overline{\HH}_T)} + \eps^{-m} T\|f\|_{C(\overline{\HH}_T)} \right).
\end{aligned}
\end{equation}
Next, we estimate the terms in \eqref{eq:Eq2} indexed by $n \geq 1$. We closely follow the argument used to prove Theorem \ref{thm:AprioriBoundaryEst}. First, we have
\begin{equation}
\label{eq:Eq5}
\begin{aligned}
\|\varphi_n (L-L_n) u_n\|_{C^{\alpha}_s(\overline{\HH}_T)}
&\leq [\varphi_n]_{C^{\alpha}_s(\overline{\HH}_T)} \|(L-L_n) u_n\|_{C([0,T]\times\supp\varphi_n)}\\
&\quad + \|(L-L_n) u_n\|_{C^{\alpha}_s([0,T]\times\supp\varphi_n)}.
\end{aligned}
\end{equation}
Using \eqref{eq:Eq0} and Lemma \ref{lem:InterpolationIneqS}, there are  positive constants $m$ and $C_1(r)$ such that
\begin{equation*}
\left[\varphi_n\right]_{C^{\alpha}_s(\overline{\HH}_T)} \|(L-L_n) u_n\|_{C([0,T]\times\supp\varphi_n)}
\leq C_1(r) \left(\eps \|u_n\|_{C^{2+\alpha}_s(\overline{\HH}_T)} + \eps^{-m} \|u_n\|_{C(\overline{\HH}_T)}\right).
\end{equation*}
By Proposition \ref{prop:ConstantCoeff}, \eqref{eq:MaximumPrinciple1} and the preceding inequality, we obtain
\begin{equation*}
\left[\varphi_n\right]_{C^{\alpha}_s(\overline{\HH}_T)} \|(L-L_n) u_n\|_{C([0,T]\times\supp\varphi_n)}
\leq C_1(r) \left(\eps \|\psi_n f\|_{C^{\alpha}_s(\overline{\HH}_T)} + \eps^{-m} T \|\psi_n f\|_{C(\overline{\HH}_T)} \right),
\end{equation*}
and thus,
\begin{equation}
\label{eq:Eq6}
\left[\varphi_n\right]_{C^{\alpha}_s(\overline{\HH}_T)} \|(L-L_n) u_n\|_{C([0,T]\times\supp\varphi_n)}
\leq C_1(r) \left(\eps \| f\|_{\sC^{\alpha}(\overline{\HH}_T)} + \eps^{-m} T \| f\|_{C(\overline{\HH}_T)} \right).
\end{equation}
By applying an argument which is the same as that used to prove Claim \ref{claim:LocalBoundaryClaim1}, we find that there are positive constants, $C$, independent of $r$, and $C_1(r)$, such that
\begin{equation*}
\|(L-L_n) u_n\|_{C^{\alpha}_s([0,T]\times\supp\varphi_n)}
\leq
C r^{\alpha/2}\|u_n \|_{C^{\alpha}_s(\overline{\HH}_T)} +C_1(r)  \|u_n \|_{C(\overline{\HH}_T)}.
\end{equation*}
By Proposition \ref{prop:ConstantCoeff}, \eqref{eq:MaximumPrinciple1} and the definition \eqref{eq:DefinitionSequenceUn} of $u_n$, it follows that
\begin{equation}
\label{eq:Eq7}
\|(L-L_n) u_n\|_{C^{\alpha}_s([0,T]\times\supp\varphi_n)}
\leq C r^{\alpha/2}\|f\|_{\sC^{\alpha}(\overline{\HH}_T)} + C_1(r)  T \| f\|_{C(\overline{\HH}_T)}.
\end{equation}
With the aid of inequalities \eqref{eq:Eq6} and \eqref{eq:Eq7}, the estimate \eqref{eq:Eq5} becomes
\begin{equation}
\label{eq:Eq8}
\begin{aligned}
\|\varphi_n (L-L_n) u_n\|_{C^{\alpha}_s(\overline{\HH}_T)}
&\leq C r^{\alpha/2}\|f\|_{\sC^{\alpha}(\overline{\HH}_T)} + C_1(r) \left(\eps \| f\|_{\sC^{\alpha}(\overline{\HH}_T)} + \eps^{-m} T \| f\|_{C(\overline{\HH}_T)} \right).
\end{aligned}
\end{equation}
Next, we estimate $[L,\varphi_n]u_n$, for $n \geq 1$, by employing a method similar to that used to estimate the term $[L,\varphi_0] u_0$. Using the identity \eqref{eq:ConstantCoeff2}, there is a positive constant $C$, depending only on $K$ appearing in \eqref{eq:BoundednessNearBoundary} and \eqref{eq:LocalHolderS},  such that
\begin{equation*}
\begin{aligned}
\left\|\left[L,\varphi_n\right]u_n\right\|_{C^{\alpha}_{s}([0,T]\times\HH)}
& \leq C r^{-3}\|u_n\|_{C^{1+\alpha}_{s}([0,T]\times\HH)} \quad\hbox{(by \eqref{eq:Eq0}).}
\end{aligned}
\end{equation*}
From Lemma \ref{lem:InterpolationIneqS}, there is a positive constant $m$ such that, for all $\eps\in(0,1)$, we have
\begin{equation*}
\begin{aligned}
\left\|\left[L,\varphi_n\right]u_n\right\|_{C^{\alpha}_{s}([0,T]\times\HH)}
& \leq C r^{-3} \left(\eps \|u_n\|_{C^{1+\alpha}_{s}([0,T]\times\HH)} + \eps^{-m} \|u_n\|_{C([0,T]\times\HH)} \right).
\end{aligned}
\end{equation*}
According to Proposition \ref{prop:ConstantCoeff} and \eqref{eq:MaximumPrinciple1}, there is a constant $C_1(r)$ so that
\begin{equation}
\label{eq:Eq9}
\begin{aligned}
\left\|\left[L,\varphi_n\right]u_n\right\|_{C^{\alpha}_{s}([0,T]\times\HH)}
& \leq C_1(r) \left(\eps \|f\|_{\sC^{\alpha}(\overline{\HH}_T)} + \eps^{-m} T\|f\|_{C(\overline{\HH}_T)} \right).
\end{aligned}
\end{equation}
Combining inequalities \eqref{eq:Eq4}, \eqref{eq:Eq8} and \eqref{eq:Eq9}, and using the fact that at most $N$ balls in the covering have non-empty intersection, the identity \eqref{eq:Eq2} yields
\begin{equation*}
\begin{aligned}
\|LMf-f\|_{\sC^{2+\alpha}(\overline{\HH}_T)}
&\leq C r^{\alpha/2}\|f\|_{\sC^{\alpha}(\overline{\HH}_T)} + C_1(r) \left(\eps \| f\|_{\sC^{\alpha}(\overline{\HH}_T)} + \eps^{-m} T \| f\|_{C(\overline{\HH}_T)} \right),
\end{aligned}
\end{equation*}
where $C$ is a positive constant independent of $r$, while $C_1(r)$ may depend on $r$. By choosing small enough $r$, then small enough $\eps$, and then small enough $T$, in that order, we find a positive constant $C_0<1$ such that
\begin{equation*}
\begin{aligned}
\|LMf-f\|_{\sC^{\alpha}(\overline{\HH}_T)}
&\leq C_0\|f\|_{\sC^{\alpha}(\overline{\HH}_T)}, \quad \forall\, f \in \sC^{\alpha}(\overline{\HH}_T),
\end{aligned}
\end{equation*}
and this gives \eqref{eq:AlmostInverse}.
\end{proof}

\begin{proof} [Proof of Theorem \ref{thm:MainExistenceUniquenessPDE}]
Uniqueness of solutions follows from Proposition \ref{prop:MaximumPrinciple}.

We notice that $\sC^{\alpha}_p(\overline{\HH}_T)\subset \sC^{\alpha}(\overline{\HH}_T)$ and
$\sC^{2+\alpha}_p(\overline{\HH})\subset \sC^{2+\alpha}(\overline{\HH})$. Let $\tilde{L}$ be any operator satisfying Hypothesis \ref{hyp:CoeffBounded}. Let $\{\varphi_n\}_{n \geq 1}$ be a sequence of non-negative, smooth cut-off functions such that
\[
0\leq \varphi_n \leq 1, \quad\varphi_n|_{B_n}=1, \quad \hbox{  and  } \varphi_n|_{B^c_{2n}}=0.
\]
We define
\[
L_n := \varphi_n L + (1-\varphi_n) \tilde{L}, \quad \forall\, n \geq 1.
\]
Then, each $L_n$ satisfies Hypothesis \ref{hyp:CoeffBounded} and, by Proposition \ref{prop:ExistenceUniquenessBoundedCoeff}, there exists a unique solution $u_n \in \sC^{2+\alpha}(\overline{\HH}_T)$ to \eqref{eq:Problem} with $L=L_n$. By Lemma \ref{lem:GlobalEst}, each solution $u_n$ satisfies the global estimate,
\begin{equation}
\label{eq:GlobalEstimate_n}
\begin{aligned}
\|u_n\|_{\sC^{2+\alpha}(\overline{\HH}_T)}
&\leq C \left(  \|f\|_{\sC^{\alpha}_p(\overline{\HH}_T)} +  \|g\|_{\sC^{2+\alpha}_p(\overline{\HH})} \right).
\end{aligned}
\end{equation}
For any bounded subdomain $U\subset\HH$ and denoting $U_T=(0,T)\times U$, the parabolic analogue, $C^{2+\alpha}_\rho(\bar U_T) \hookrightarrow C^2_\rho(\bar U_T) \equiv C^{1,2}(\bar U_T)$, of the compact embedding \cite[Theorem 1.31 (4)]{Adams_1975} of standard H\"older spaces, $C^{2+\alpha}(\bar U) \hookrightarrow C^2(\bar U)$, implies that the sequence $\{u_n\}_{n\geq 1}$ converges strongly in $C^{1,2}(\bar U_T)$ to the limit $u \in C^{1,2}(U_T)$, that is, $u_n\to u$ in $C^{1,2}(U_T)$, as $n\to \infty$ for every bounded subdomain $U\subset\HH$. It is now easily seen that $u$ solves \eqref{eq:Problem}. By the Arzel$\grave{\text{a}}$-Ascoli Theorem, we obtain that $u \in \sC^{2+\alpha}(\overline{\HH}_T)$ and $u$ satisfies \eqref{eq:GlobalEstimate}.
\end{proof}

\appendix

\section{Existence and uniqueness of solutions for a degenerate-parabolic operator with constant coefficients}
\label{app:ExistenceUniquenessDegenParabolicPDEConstantCoefficients}
In order to derive the local a priori boundary estimates in Theorem \ref{thm:AprioriBoundaryEst}, we need an analogue of \cite[Theorem I.1.1]{DaskalHamilton1998} when the coefficients of our operator $L$, $a^{ij}$, $b^i$ and $c$, are assumed \emph{constant}. To emphasize this fact in this appendix, we denote our parabolic operator by
\begin{equation}
\label{eq:ConstantCoeffGenDH}
\begin{aligned}
-L_0 u := -u_t + \sum_{i,j=1}^d x_d a^{ij} u_{x_ix_j} + \sum_{i=1}^d b^i u_{x_i}+cu \quad \hbox{ on } (0,T)\times\HH.
\end{aligned}
\end{equation}
We now have the following analogue of \cite[Theorem I.1.1]{DaskalHamilton1998}.

\begin{prop}[Existence and uniqueness of solutions for a degenerate parabolic operator with constant coefficients]
\label{prop:ConstantCoeff}
Let $K$, $\delta$ and $\nu$ be positive constants such that
\begin{align}
\label{assump:ConstantCoeffNonDeg}
 \sum_{i,j=1}^d a^{ij} \eta_i\eta_j &\geq \delta \|\eta\|^2, \quad \forall\, \eta \in \RR^d,\\
\label{assump:ConstantCoeffPositiveDrift}
 b^d &\geq \nu,\\
 \label{assump:ConstantCoeffBoundednedd}
 |a^{ij}|, |b^i|, |c| &\leq K, \quad 1\leq i,j\leq d.
\end{align}
Let $k$ be a non-negative integer, $T>0$, and $\alpha \in (0,1)$. Assume that $f \in C^{k, \alpha}_s(\overline{\HH}_T)$ and $g \in C^{k, 2+\alpha}_s(\overline{\HH})$ with both $f$ and $g$ compactly supported in $\overline{\HH}_T$ and $\overline{\HH}$, respectively. Then, the inhomogeneous initial value problem,
\begin{equation}
\label{eq:ConstantCoeffPDE}
\begin{aligned}
\begin{cases}
         L_0 u = f & \mbox{ on } (0,T)\times\HH, \\
         u(0,\cdot)=g  & \mbox{ on } \overline{\HH},
\end{cases}
\end{aligned}
\end{equation}
admits a unique solution $u\in C^{k, 2+\alpha}_s(\overline{\HH}_T)$. Moreover, there is a positive constant $C=C(T, K, \delta, \nu,\alpha, d, k)$ such that
\begin{equation}
\label{eq:ConstantCoeffEst}
\begin{aligned}
\|u\|_{C^{k, 2+\alpha}_s(\overline{\HH}_T)} \leq C \left(\|f\|_{C^{k, \alpha}_s(\overline{\HH}_T)} + \|g\|_{C^{k, 2+\alpha}_s(\overline{\HH})}\right).
\end{aligned}
\end{equation}
\end{prop}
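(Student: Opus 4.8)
### Proof proposal for Proposition~\ref{prop:ConstantCoeff}

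\textbf{Overall strategy.} The plan is to reduce the problem, via a change of variables in the degenerate direction, to the Daskalopoulos--Hamilton model operator \eqref{eq:DHModel} for which \cite[Theorem I.1.1]{DaskalHamilton1998} (and Koch's parallel work) already supplies existence, uniqueness, and the Schauder estimate \eqref{eq:ConstantCoeffEst} in the $C^{k,2+\alpha}_s$-scale. The point is that, \emph{for constant coefficients}, the operator $L_0$ in \eqref{eq:ConstantCoeffGenDH} differs from the model operator only by (i) a constant linear change of spatial coordinates that normalizes the constant matrix $(a^{ij})$, (ii) a first-order drift in the tangential directions $x_1,\dots,x_{d-1}$ and a lower-order term $cu$, neither of which is present in \eqref{eq:DHModel}, and (iii) the fact that the coefficient of $u_{x_d}$ is some constant $b^d\geq\nu>0$ rather than exactly $\nu$. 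I will handle these one at a time, tracking at each stage that the constant in the resulting estimate depends only on $K,\delta,\nu,\alpha,d,k,T$, and that compact support of the data is preserved (or only mildly affected).

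\textbf{Step 1: normalize the leading coefficients.} Write $(a^{ij})=A$, a constant symmetric positive-definite matrix with $\delta I\leq A\leq KI$ by \eqref{assump:ConstantCoeffNonDeg}--\eqref{assump:ConstantCoeffBoundednedd}. I would like to diagonalize $A$ by a linear map $x\mapsto Sx$, but an arbitrary linear map does not preserve the half-space $\HH=\{x_d>0\}$ nor the degeneracy factor $x_d$. So instead I first eliminate the mixed $x_i x_d$ second-order terms ($i<d$) and then the $x_ix_j$ mixed terms ($i,j<d$): a shear of the form $y' = x' + x_d v$ for a suitable constant vector $v$, followed by a linear change in the $x'$-variables alone, brings the principal part to $x_d\big(\sum_{i<d}\lambda_i \partial_{y_i}^2 + a^{dd}_*\partial_{y_d}^2\big)$ with all $\lambda_i$ and $a^{dd}_*$ comparable to $\delta$ and $K$; a final rescaling $y_i\mapsto y_i/\sqrt{\lambda_i}$, $y_d\mapsto y_d/\sqrt{a^{dd}_*}$ produces the Laplacian $x_d\sum_i \partial_{y_i}^2$ times a positive constant which can be absorbed by rescaling time. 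These are all bi-Lipschitz (indeed affine) maps fixing $\{x_d=0\}$ and changing $x_d$ only by a positive constant factor, so they induce isomorphisms of the spaces $C^{k,\alpha}_s$ and $C^{k,2+\alpha}_s$ with constants controlled by $\delta,K,d$, and they preserve compact support of the data. Under this reduction the drift picks up new constant coefficients, still bounded in terms of $K,\delta,\nu,d$, and $b^d$ is transformed to some new constant $\widetilde b^{\,d}\geq \nu/\sqrt{a^{dd}_*}>0$, i.e.\ still bounded below by a positive constant depending only on $K,\nu$.

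\textbf{Step 2: remove the tangential drift and the zeroth-order term.} After Step~1 the operator has the form $-L_0 u = -u_t + x_d\Delta u + \widetilde b^{\,d} u_{x_d} + \sum_{i<d}\beta_i u_{x_i} + c\,u$ with $\widetilde b^{\,d}\geq c_0>0$ and constants $\beta_i, c$ bounded. The substitution $u(t,x) = e^{ct}\,w(t,\, x' + t\beta,\, x_d)$, where $\beta=(\beta_1,\dots,\beta_{d-1})$, is a Galilean shift that kills both the tangential drift and the term $c\,u$: a direct computation shows $w$ solves $L_* w = \tilde f$ with $L_*$ the model operator \eqref{eq:DHModel} with parameter $\nu$ replaced by the constant $\widetilde b^{\,d}\geq c_0$, and $\tilde f(t,x)=e^{-ct}f(t,x'-t\beta,x_d)$, $w(0,\cdot)=g$. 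The Galilean shift is an isometry (up to the factor $e^{\pm ct}$, bounded by $e^{|c|T}$) of all the relevant $C^{k,\cdot}_s$-norms and maps compactly supported data to compactly supported data (with support enlarged by at most $T|\beta|$ in the tangential directions). Finally, the model operator with coefficient $\widetilde b^{\,d}$ in place of $\nu$ is covered by \cite[Theorem I.1.1]{DaskalHamilton1998}: one checks that the proof there only uses $\nu>0$, and the constant in its estimate is monotone/continuous in $\nu$, so it can be bounded uniformly for $\widetilde b^{\,d}\in[c_0,\,K/\sqrt{\delta}]$, say. This yields a unique $w\in C^{k,2+\alpha}_s(\overline\HH_T)$ with $\|w\|_{C^{k,2+\alpha}_s}\le C(\|\tilde f\|_{C^{k,\alpha}_s}+\|g\|_{C^{k,2+\alpha}_s})$, and transporting back through Steps~2 and~1 gives \eqref{eq:ConstantCoeffEst} with the asserted dependence of $C$. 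Uniqueness transports back as well (alternatively, it follows directly from the maximum principle, Lemma~\ref{lem:MaximumPrinciple}, whose hypotheses are trivially met for these constant-coefficient operators).

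\textbf{Main obstacle.} The routine calculations are the explicit coordinate changes in Step~1 and verifying norm-equivalence constants; the genuinely delicate point is Step~2's claim that the Daskalopoulos--Hamilton existence theorem applies with the \emph{same} constant structure when their fixed parameter $\nu$ is replaced by an arbitrary constant in a compact subinterval of $(0,\infty)$, and that their argument does not secretly use $\nu=$ a specific value or $b^i=0$ for $i<d$ beyond what the Galilean shift already arranges. I expect this to require either re-inspecting their proof to confirm the $\nu$-dependence of the Schauder constant is locally bounded, or citing Koch's treatment \cite{Koch} which states the estimate with explicit parameter dependence; if $k\ge 1$, one also needs the higher-order ($C^{k,2+\alpha}_s$) version, which both references provide by differentiating the equation in the tangential variables and reapplying the $k=0$ estimate.
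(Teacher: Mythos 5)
Your route diverges from the paper's in a substantive way. After the same preliminary reductions (removing $cu$, a shear in $x_d$ to kill the mixed $a^{id}$ terms, an orthogonal change plus rescaling to diagonalize the remaining block), the paper arrives at \eqref{eq:SimplerForm}, which still carries a tangential drift $\sum_{i<d} b^i u_{x_i}$, and then \emph{accepts} that drift: the entire point of Appendix~\ref{app:ExistenceUniquenessDegenParabolicPDEConstantCoefficients} is to modify the Daskalopoulos--Hamilton barrier functions (Claim~\ref{claim:Claim1}, replacing the static barrier by the travelling-wave barrier $\varphi_i(t,x)=(1+x_i-b(t-t_0))^{-2}+(1-x_i-b(t-t_0))^{-2}$), which are then valid only on short time subintervals, necessitating a covering of $[0,T]$. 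You instead propose a further Galilean shift $u(t,x)=e^{ct}\,w(t,\,x'+t\beta,\,x_d)$ to eliminate the tangential drift entirely and land on the model operator \eqref{eq:DHModel}. That is a different decomposition: if it works, it bypasses the whole barrier-function modification.

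The gap is that you assert, without proof, that the Galilean shift is ``an isometry (up to the factor $e^{\pm ct}$) of all the relevant $C^{k,\cdot}_s$-norms.'' It is not an isometry, and whether it is even a bounded isomorphism of $C^{\alpha}_s$ is exactly the delicate point. The cycloidal distance \eqref{eq:CycloidalMetric} is not translation-invariant in a way compatible with mixing $t$ and $x'$: a pure time increment $\Delta t$ has $s$-distance $\sqrt{|\Delta t|}$, while after the shift it also produces a tangential displacement $\beta\Delta t$, contributing $|\beta\Delta t|\big/\bigl(\sqrt{x_d^1}+\sqrt{x_d^2}+\sqrt{|\beta\Delta t|}\,\bigr)$ to the distance, and one must check this is $\lesssim\sqrt{|\Delta t|}$ \emph{uniformly down to} $x_d=0$, plus the reverse inequality when $\Delta x'$ and $\beta\Delta t$ cancel. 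This does in fact hold (the shift is bi-Lipschitz for $s$ with constant $\sim 1+\sqrt{|\beta|}$), but it is a lemma you need to prove, not an obvious fact --- and it is worth noting that the paper itself asserts (Example~\ref{exmp:GeneralizedPorousMediumEquation} and the opening of the proof of Proposition~\ref{prop:ConstantCoeff}) that \emph{no} change of variables brings $L_0$ to the form \eqref{eq:DHModel}; your proposal, if carried through, contradicts that remark, so you owe the reader an explicit reconciliation. Beyond this, you yourself flag two unresolved points --- the uniform-in-$\nu$ dependence of the constant in \cite[Theorem I.1.1]{DaskalHamilton1998}, and the higher-order case $k\ge 1$ --- but leave them as expectations rather than proofs. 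Taken together, the proposal sketches a cleaner reduction than the paper's, but it does not yet amount to a proof: the bi-Lipschitz claim for $s$ under the Galilean shift must be written out, the $\nu$-dependence in the Daskalopoulos--Hamilton estimate must be verified, and the $k\ge 1$ case must be addressed.
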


\begin{proof}
We adapt the proof of \cite[Theorem I.1.1]{DaskalHamilton1998}. Because the proof of \cite[Theorem I.1.1]{DaskalHamilton1998} is lengthy, we only outline the modifications, noting that these modifications are straightforward. We remark that there is no simple change of variables that can be applied in order to bring the
constant-coefficient equation \eqref{eq:ConstantCoeffPDE} to the form of the model equation defined in \cite[p. 901]{DaskalHamilton1998}. Another difficulty is that our interpolation inequalities (Lemma \ref{lem:InterpolationIneqS}) do not allow us to treat the first order derivatives, $u_{x_i}$, in \eqref{eq:Generator} as lower order terms: in order to do that, we would need to have
\begin{equation*}
\begin{aligned}
\|u_{x_i}\|_{C^{\alpha}_s(\overline{\HH}_T)} \leq \eps \|u\|_{C^{2+\alpha}_s(\overline{\HH}_T)} + C \eps^{-m} \|u\|_{C(\overline{\HH}_T)},
\end{aligned}
\end{equation*}
instead of the interpolation inequality \eqref{eq:InterpolationIneqS3}. On the other hand, by simple changes of variables which we describe below and
which preserve the domain $\HH$ and its boundary $\partial\HH$, problem \eqref{eq:ConstantCoeffPDE} can be simplified to
\begin{equation}
\label{eq:SimplerForm}
\begin{aligned}
- L_0 u = -u_t + x_d\sum_{i=1}^d u_{x_ix_i} + \sum_{i=1}^d b^i u_{x_i} \quad \hbox{ on } (0,T)\times\HH,
\end{aligned}
\end{equation}
where the coefficient $b^d>0$ remains unchanged. In addition, the possibly new constant coefficients $b^i$ are bounded in absolute value by constants  which depend only on $\delta$ in \eqref{assump:ConstantCoeffNonDeg} and $K$ in \eqref{assump:ConstantCoeffBoundednedd}.

The simple changes of variables are as follows. As usual, we eliminate the zeroth-order term $cu$ by multiplying $u$ by $e^{ct}$ and so we may assume without loss of generality that $c=0$ in \eqref{eq:ConstantCoeffGenDH}. We define a function $\tilde u$ on $(0,T)\times\HH$ by choosing $y = (y_1,\ldots,y_d)$ and
$$
(y_1,\ldots,y_d) := (x_1+\alpha_1x_d, \ldots, x_{d-1}+\alpha_{d-1}x_d,x_d),  \quad\hbox{where } \alpha_i := -\frac{a^{id}}{a^{dd}}, \quad 1\leq i\leq d-1,
$$
and
\[
\tilde{u}(t, y) := u(t,x).
\]
Note that $a^{dd}>\delta$, by choosing $\eta=(0,0,\ldots,1)$ in \eqref{assump:ConstantCoeffNonDeg}. By direct calculations, we obtain (omitting the arguments of the functions $u$ and $\tilde u$ for brevity),
\begin{align*}
u_{x_i} &= \tilde u_{y_i}, \quad \forall\, i \neq d,\\
u_{x_d} &= \sum_{k\neq d} \alpha_k\tilde u_{y_k} + \tilde u_{y_d}, \\
u_{x_ix_j} &= \tilde u_{y_iy_j}, \quad \forall\, i,j \neq d,\\
u_{x_ix_d} &= \sum_{k\neq d} \alpha_k\tilde u_{y_iy_k} + \tilde u_{y_iy_d}, \quad \forall\, i \neq d,\\
u_{x_dx_d} &= \sum_{k,l\neq d} \alpha_k\alpha_l\tilde u_{y_ky_l} + 2\sum_{k\neq d} \alpha_k \tilde u_{y_ky_d} + \tilde u_{y_d y_d},
\end{align*}
from where it follows that,
\begin{align*}
L u &=
\tilde u_t - y_d a^{dd} \tilde{u}_{y_dy_d}
-2 y_d\sum_{i\neq d}\left(a^{dd}\alpha_i+a^{id}\right) \tilde{u}_{y_iy_d}
- y_d\sum_{i,j\neq d} \left(a^{dd}\alpha_i\alpha_j+a^{ij}+a^{id} \alpha_j\right)\tilde{u}_{y_iy_d}\\
&\quad-\sum_{i\neq d} \left(b^i+\alpha_ib^d\right)\tilde{u}_{y_i}- b^d \widetilde{u}_{y_d}.
\end{align*}
Since $\alpha_i=-a^{id}/a^{dd}$ for all $i \neq d$, we obtain
\begin{align*}
a^{dd}\alpha_i+a^{id} &= 0,\quad \forall\, i \neq d,\\
a^{dd}\alpha_i\alpha_j+a^{ij}+a^{id} \alpha_j &= a^{ij}, \quad \forall\, i,j \neq d,
\end{align*}
and so problem \eqref{eq:ConstantCoeffPDE} is reduced to the study of the operator $\widetilde L_0$ on $(0,T)\times\HH$ given by
\[
\widetilde L_0 \tilde{u} := \tilde{u}_t - y_d a^{dd} \tilde{u}_{y_dy_d}
- y_d\sum_{i,j\neq d} a^{ij}\tilde{u}_{y_iy_j}
-\sum_{i\neq d} \left(b^i+\alpha_ib^d\right)\tilde{u}_{y_i}- b^d \widetilde{u}_{y_d}.
\]
Because the $(d-1)\times(d-1)$ matrix $\bar a:=(a^{ij})_{i,j={1,\ldots, d-1}}$ is positive-definite and symmetric, there is an orthogonal matrix $P$ such that $P^* \bar a P = D$, where $D:= \diag\left(\lambda_1,\ldots,\lambda_{d-1}\right)$ and $\lambda_i$, $i=1,\ldots,d-1$, are the (positive) eigenvalues of $\bar a$. By setting
$$
Q:= \begin{pmatrix}PD^{-1/2} & 0 \\ 0 & (a^{dd})^{-1/2}\end{pmatrix}, \quad\hbox{where }D^{-1/2} = \diag\left(\lambda_1^{-1/2},\ldots,\lambda_{d-1}^{-1/2}\right),
$$
we notice that
$$
Q^*\begin{pmatrix}\bar a & 0 \\ 0 & a^{dd}\end{pmatrix}Q = I_d,
$$
where $I_d$ is the $d\times d$ identity matrix. Proceeding as in the \cite[Proof of Lemma 6.1]{GilbargTrudinger}, we choose $z := yQ$ and
\[
\bar u(t,z) := \tilde u(t, y), \quad \forall\, (t,y) \in (0,T)\times\HH.
\]
Then, direct calculations show that problem \eqref{eq:ConstantCoeffPDE} is reduced to the study of the operator $\bar L_0$  on $(0,T)\times\HH$ given by
\[
- \bar L_0 \bar u := -\bar u_t + z_d  \sum_{i=1}^d \bar u_{z_iz_i} + \sum_{i=1}^d\bar b^i\bar u_{z_i},
\]
where the constant coefficients $\bar b^i$ may differ from the coefficients $b^i$, for $i\neq d$, and the coefficient $\bar b^d:=\sqrt{a^{dd}}b^d>0$. Therefore, for the remainder of this section, we may assume without loss of generality that $L_0$ is of the simpler form \eqref{eq:SimplerForm}.

The primary change required in the proof of \cite[Theorem I.1.1]{DaskalHamilton1998} lies in \cite[\S I.4]{DaskalHamilton1998}. The arguments in the remainder of \cite[Part I]{DaskalHamilton1998} adapt almost line by line to our model operator \eqref{eq:SimplerForm}. The goal in \cite[\S I.4]{DaskalHamilton1998} is to derive local
estimates of derivatives and this is achieved by applying a comparison principle with barrier functions. First, we need to adapt the definition of the barrier function \cite[Definition I.4.1]{DaskalHamilton1998} to one which is suitable for use with \eqref{eq:SimplerForm}.

\begin{defn}
\label{defn:BarrierFunction}
Let $0<t_1<t_2$. We say $\varphi$ is a \emph{barrier function} for $L_0$ when $t\in [t_1,t_2]$, if there are positive constants $C$ and $c$ such that
\begin{equation}
\label{eq:EquationBarrierFunction}
L_0 \varphi > -C x_d \varphi^2 + c \varphi^{3/2} + c.
\end{equation}
\end{defn}

The barrier functions in \cite[Theorems I.4.5 and I.4.8]{DaskalHamilton1998} are also barrier functions in the sense of Definition \ref{defn:BarrierFunction}. The barrier function constructed in \cite[Theorem I.4.6]{DaskalHamilton1998} needs modification because the coefficients $b^i$, $i=1,\ldots,d-1$, are non-zero in general, unlike in \cite[Part I]{DaskalHamilton1998}. We have the following modification.

\begin{claim}
\label{claim:Claim1}
Assume $i\neq d$. For any $\gamma<1$ as in \cite[Definition I.4.2]{DaskalHamilton1998}, there are a positive constant $b$, depending only on $|b^i|$, and a positive constant $\Delta$, depending only on $|b^i|$, $b$, and $\gamma$, such that, for any $t_0\geq 0$,
\begin{equation}
\label{eq:ConstaCoeffBarrierFunction}
\varphi_i(t,x) := \frac{1}{(1+x_i-b(t-t_0))^2} + \frac{1}{(1-x_i-b(t-t_0))^2}
\end{equation}
is a valid barrier function satisfying \eqref{eq:EquationBarrierFunction}, for all $t \in [t_0, t_0+\Delta]$.
\end{claim}

\begin{proof}[Proof of Claim \ref{claim:Claim1}]
It suffices to consider separately the terms ${}^{+}\varphi_i$ and ${}^{-}\varphi_i$ defined by
\[
{}^{\pm}\varphi_i := \frac{1}{(1\pm x_i-b(t-t_0))^2},
\]
because the barrier functions form a cone by \cite[Theorem I.4.4]{DaskalHamilton1998}.
We prove that ${}^{+}\varphi_i$ satisfies \eqref{eq:EquationBarrierFunction}, and the proof follows similarly for ${}^{-}\varphi_i$. We denote  $\varphi:={}^{+}\varphi_i$ for simplicity. By direct calculation, we obtain
\begin{equation*}
\begin{aligned}
\varphi_t &= 2b \varphi^{ 3/2}, \\
\varphi_{x_i} &= - 2 \varphi^{ 3/2}, \\
\varphi_{x_ix_i} &= 6 \varphi^{ 2},
\end{aligned}
\end{equation*}
while $\varphi_{x_j}=0$ and $\varphi_{x_jx_k}=0$, unless $j=k=i$.
Then, we have
\begin{equation*}
\begin{aligned}
L_0 \varphi = 2(b  + b _i) \varphi^{ 3/2} -6 x_d  \varphi^{ 2}.
\end{aligned}
\end{equation*}
We impose $1-b(t-t_0) \geq \gamma$, for all $t \in [t_0, t_0+\Delta]$, so we choose $\Delta  < (1-\gamma)/b$.
By choosing $b=|b^i|+1$, we can find $C>0$ and $c>0$ such that
\[
L_0 \varphi \geq - x_d C \varphi^{2} + c\varphi^{ 3/2} +c,
\]
and so $\varphi$ satisfies the requirement \eqref{eq:EquationBarrierFunction}, for all $t\in[t_0,t_0+\Delta]$.
\end{proof}

Next, the arguments in \cite[\S I.5]{DaskalHamilton1998} adapt to our framework with the following observation. Because our barrier functions \eqref{eq:ConstaCoeffBarrierFunction} are not defined for all $t\in [0,1]$, we cover first the interval $[0,1]$ by a finite number of intervals of length $\Delta$, as given in Claim \ref{claim:Claim1}, and we apply the maximum principle on each of the resulting subintervals. This will yield local estimates analogous to \cite[Theorems I.5.1, I.5.4 and Corollary I.5.7]{DaskalHamilton1998}, on the small time subintervals of the finite covering. By combining the local derivative estimates over each subinterval, we obtain the required local estimates for all $t \in [0,1]$.
\end{proof}

\section{Proofs of Lemma \ref{lem:ClassicalEstConstCoeff} and Proposition \ref{prop:ClassicalEstVarCoeff}}
\label{app:ProofKrylovResults}
We begin with the

\begin{proof}[The proof of Lemma \ref{lem:ClassicalEstConstCoeff}]
The proof follows the argument used to prove \cite[Lemmas 9.2.1 and 8.9.1]{Krylov_LecturesHolder}, only we are careful about the dependencies of the constants appearing in the estimates on $\delta_1$ and $K_1$, given by \eqref{eq:NonDegeneracyBoundedCoeff} and \eqref{eq:GlobalHolderContinuityBoundedCoeff}, respectively.
Let $U$ be an orthogonal matrix such that $A = U \text{diag} (\lambda_i) U^{T}$, where
$\lambda_i \in [\delta_1, K_1]$ are the eigenvalues of the symmetric, positive-definite matrix, $(a^{ij})$. We denote $B = U \text{diag}(\sqrt{\lambda_i}) U^*$ and $v(t,x)=u(t,Bx)$, $\bar{f}(t, x)=f(t,Bx)$, and $\bar{g}( x)=g(Bx)$. Then, by defining
$w(t,x):=e^{-t}v(t,x)$, we see that $w\in C^{2+\alpha}_{\rho}([0,T]\times\mathbb{R}^d)$  solves the inhomogeneous heat equation,
\begin{align*}
\begin{cases}
         w_t-\Delta w +w = e^{t}\bar{f}& \mbox{ on $(0,T]\times \mathbb{R}^d$}, \\
         w(0,\cdot)=\bar{g}&        \mbox{ on $\mathbb{R}^d$}.
\end{cases}
\end{align*}
By applying
\cite[Theorem 9.2.1]{Krylov_LecturesHolder} to $w$, we obtain a constant
$\bar N_1=\bar N_1(\alpha, d)$ such that
\begin{equation*}
\|w\|_{C^{2+\alpha}_{\rho}([0,T]\times\mathbb{R}^d)}
\leq \bar N_1 \left(\|e^{t}\bar{f}\|_{C^{\alpha}_{\rho}([0,T]\times\mathbb{R}^d)} + \|\bar{g}\|_{C^{2+\alpha}_{\rho}(\mathbb{R}^d)}\right),
\end{equation*}
which gives us, for $v(t,x)=e^{t} w(t,x)$, the estimate
\begin{equation}
\label{eq:ClassicalHolderConstCoeff}
\|v\|_{C^{2+\alpha}_{\rho}([0,T]\times\mathbb{R}^d)}
\leq N_1 \left(\|\bar{f}\|_{C^{\alpha}_{\rho}([0,T]\times\mathbb{R}^d)} + \|\bar{g}\|_{C^{2+\alpha}_{\rho}(\mathbb{R}^d)}\right),
\end{equation}
where now $N_1=N_1(\alpha,d,T)$.

To obtain \eqref{eq:SchauderEstConstCoeff} from \eqref{eq:ClassicalHolderConstCoeff}, we need the following

\begin{claim}
\label{claim:EquivalenceHolderNorms}
There is a positive constant $C=C(d)$ such that, for any $w_1 \in C^{\alpha}_{\rho}([0,T]\times\RR^d)$ and any symmetric, positive-definite $d\times d$-matrix, M, with eigenvalues in $[\lambda_{\min},\lambda_{\max}]$, where $\lambda_{\max}>\lambda_{\min}>0$, we have
\begin{align}
\label{eq:EquivalenceHolderNorms1}
\|w_1\|_{C^{\alpha}_{\rho}([0,T]\times\RR^d)} &\leq C(1+\lambda^{-\alpha}_{\min}) \|w_2\|_{C^{\alpha}_{\rho}([0,T]\times\RR^d)},\\
\label{eq:EquivalenceHolderNorms2}
\|w_2\|_{C^{\alpha}_{\rho}([0,T]\times\RR^d)} &\leq C(1+\lambda_{\max}^{\alpha}) \|w_1\|_{C^{\alpha}_{\rho}([0,T]\times\RR^d)},
\end{align}
where $w_2(t,x):=w_1(t,Mx)$.
\end{claim}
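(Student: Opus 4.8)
The plan is to establish \eqref{eq:EquivalenceHolderNorms2} first and then obtain \eqref{eq:EquivalenceHolderNorms1} by applying \eqref{eq:EquivalenceHolderNorms2} with the roles of $w_1,w_2$ reversed, using the identity $w_1(t,x)=w_2(t,M^{-1}x)$: the matrix $M^{-1}$ is again symmetric and positive-definite, with eigenvalues in $[\lambda_{\max}^{-1},\lambda_{\min}^{-1}]$, so its largest eigenvalue is $\lambda_{\min}^{-1}$, and \eqref{eq:EquivalenceHolderNorms2} applied to $M^{-1}$ turns the prefactor $C(1+\lambda_{\max}^\alpha)$ into $C(1+\lambda_{\min}^{-\alpha})$, which is \eqref{eq:EquivalenceHolderNorms1}.

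To prove \eqref{eq:EquivalenceHolderNorms2}, I would first note that, since $M$ is invertible, $x\mapsto Mx$ is a bijection of $\RR^d$, whence $\|w_2\|_{C([0,T]\times\RR^d)}=\|w_1\|_{C([0,T]\times\RR^d)}$. For the H\"older seminorm, substituting $y^k:=Mx^k$ in \eqref{eq:SeminormRHO} gives
\[
[w_2]_{C^\alpha_\rho([0,T]\times\RR^d)}=\sup_{(t_1,y^1)\neq(t_2,y^2)}\frac{|w_1(t_1,y^1)-w_1(t_2,y^2)|}{\big(\sum_{i=1}^d|(M^{-1}(y^1-y^2))_i|+\sqrt{|t_1-t_2|}\big)^\alpha},
\]
so the whole matter reduces to comparing $\sum_i|(M^{-1}z)_i|$ with $\sum_i|z_i|$ for $z\in\RR^d$. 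Here I would use that a symmetric positive-definite $M$ with spectrum in $[\lambda_{\min},\lambda_{\max}]$ has Euclidean operator-norm bounds $\|M^{-1}z\|\le\lambda_{\min}^{-1}\|z\|$ and $\|Mz\|\le\lambda_{\max}\|z\|$, together with the elementary equivalence $\|z\|\le\sum_i|z_i|\le\sqrt d\,\|z\|$, to deduce
\[
d^{-1/2}\lambda_{\max}^{-1}\sum_{i=1}^d|z_i|\;\le\;\sum_{i=1}^d|(M^{-1}z)_i|\;\le\;d^{1/2}\lambda_{\min}^{-1}\sum_{i=1}^d|z_i|,\qquad z\in\RR^d.
\]
Combining the lower bound with the elementary inequality $\frac{a+b}{\kappa a+b}\le\max\{1,\kappa^{-1}\}$ (valid for $a,b\ge0$, $\kappa>0$) with $\kappa:=d^{-1/2}\lambda_{\max}^{-1}$, one bounds the ratio of the transformed parabolic distance to the original one by $\max\{1,\sqrt d\,\lambda_{\max}\}$; then the numerator estimate $|w_1(t_1,y^1)-w_1(t_2,y^2)|\le[w_1]_{C^\alpha_\rho}\big(\sum_i|y^1_i-y^2_i|+\sqrt{|t_1-t_2|}\big)^\alpha$ and the subadditivity $(x+y)^\alpha\le x^\alpha+y^\alpha$ for $\alpha\in(0,1)$ yield $[w_2]_{C^\alpha_\rho}\le\sqrt d\,(1+\lambda_{\max}^\alpha)[w_1]_{C^\alpha_\rho}$. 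Adding this to the equality of supremum norms gives \eqref{eq:EquivalenceHolderNorms2} with $C=1+\sqrt d$.

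This claim is essentially a change-of-variables bookkeeping exercise for the parabolic H\"older norm, and I do not expect a genuine obstacle; the only point demanding attention is the verification that the final constant depends on $d$ alone. This is exactly why one invokes the subadditivity of $t\mapsto t^\alpha$ to extract the clean factor $(1+\lambda_{\max}^\alpha)$ rather than $(1+\lambda_{\max})^\alpha$, absorbs the remaining $d^{\alpha/2}\le\sqrt d$ into the constant, and takes $C\ge1$ so that incidental powers $C^\alpha\le C$ cause no harm.
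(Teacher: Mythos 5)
Your proof is correct and rests on the same core mechanism as the paper's: push the linear change of variables through the H\"older seminorm, control it via the spectral bounds for $M$ and $M^{-1}$, and absorb dimension factors into $C(d)$. The mirror-image ordering (you prove \eqref{eq:EquivalenceHolderNorms2} first and then obtain \eqref{eq:EquivalenceHolderNorms1} by swapping $M$ for $M^{-1}$; the paper proves \eqref{eq:EquivalenceHolderNorms1} and swaps to get \eqref{eq:EquivalenceHolderNorms2}) is purely cosmetic. Where you genuinely differ from the paper is in the bookkeeping of the parabolic distance: the paper reduces the H\"older seminorm to pairs of points that differ in a single coordinate at a time and treats the purely-temporal and purely-spatial cases separately, leaving both the dimension-dependent cost of that reduction (a Jensen-type inequality) and the $\ell^1$-versus-Euclidean mismatch inside $\rho$ implicit; you instead keep the full parabolic distance $\sum_i|z_i|+\sqrt{|t_1-t_2|}$, control the ratio of distances via the elementary bound $\frac{a'+b}{\kappa a'+b}\le\max\{1,\kappa^{-1}\}$, and make the $\ell^1$--$\ell^2$ equivalence explicit. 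Your variant is a touch more self-contained; note that the subadditivity of $t\mapsto t^\alpha$ is not strictly needed, since $\max\{1,\kappa^{-1}\}^\alpha=\max\{1,\kappa^{-\alpha}\}$ already delivers the clean $(1+\lambda_{\max}^\alpha)$ factor after absorbing $d^{\alpha/2}\le\sqrt d$ into $C(d)$, but invoking it is harmless.
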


\begin{proof}[Proof of Claim \ref{claim:EquivalenceHolderNorms}]
We first prove \eqref{eq:EquivalenceHolderNorms1}. Obviously, we have
\begin{equation}
\label{eq:EquivalenceHolderNorms3}
\|w_1\|_{C([0,T]\times\RR^d)} = \|w_2\|_{C([0,T]\times\RR^d)}.
\end{equation}
Next, it suffices to consider $|w_1(P^1)-w_1(P^2)|/\rho^\alpha(P,Q)$, for points $P_i=(t^i,x^i)\in [0,T]\times\RR^d$, $i=1,2$, where only one of the coordinates differs. Notice that when $x^1=x^2$, then
\[
\frac{|w_1(P^1)-w_1(P^2)|}{\rho^\alpha(P^1,P^2)} = \frac{|w_2(P^1)-w_2(P^2)|}{\rho^\alpha(P^1,P^2)},
\]
because the transformation $w_2(t,x):=w_1(t,Mx)$ acts only on the spatial variables. Therefore, we have
\begin{equation}
\label{eq:EquivalenceHolderNorms5}
\frac{|w_1(P^1)-w_1(P^2)|}{\rho^\alpha(P^1,P^2)} \leq [w_2]_{C^{\alpha}_{\rho}([0,T]\times\RR^d)},
\end{equation}
Next, we consider the case $t^1=t^2=t$. Then, we have by writing $w_1(t,x)=w_2(t,M^{-1}x)$,
\begin{equation*}
\frac{|w_1(P^1)-w_1(P^2)|}{\rho^\alpha(P^1,P^2)} = \frac{|w_2(t,M^{-1}x^1)-w_2(t,M^{-1}x^2)|}{|M(M^{-1}x^1-M^{-1}x^2)|^{\alpha}}
\end{equation*}
Using the fact that $M$ is a symmetric, positive-definite matrix with eigenvalues in the range $[ \lambda_{\min},\lambda_{\max}]$, it follows
\[
|M(M^{-1}x^1-M^{-1}x^2)| \geq \lambda_{\min} |M^{-1}x^1-M^{-1}x^2|, \quad \forall\, x^1, x^2\in \RR^d,
\]
and so, by the preceding two inequalities, we have
\begin{equation}
\label{eq:EquivalenceHolderNorms6}
\frac{|w_1(P^1)-w_1(P^2)|}{\rho^\alpha(P^1,P^2)} \leq \lambda_{\min}^{-\alpha}[w_2]_{C^{\alpha}_{\rho}([0,T]\times\RR^d)}.
\end{equation}
Combining inequalities \eqref{eq:EquivalenceHolderNorms3}, \eqref{eq:EquivalenceHolderNorms5} and \eqref{eq:EquivalenceHolderNorms6}, we obtain \eqref{eq:EquivalenceHolderNorms1}.

To obtain \eqref{eq:EquivalenceHolderNorms2}, we apply \eqref{eq:EquivalenceHolderNorms1} to $w_2$ in place of $w_1$. Then, the matrix $M$ is replaced by the symmetric, positive-definite matrix $M^{-1}$ with eigenvalues in $[\lambda_{\max}^{-1}, \lambda_{\min}^{-1}]$. Therefore, $\lambda_{\min}^{-1}$ in \eqref{eq:EquivalenceHolderNorms1} is replaced by $\lambda_{\max}$, and thus, we obtain \eqref{eq:EquivalenceHolderNorms2}.
\end{proof}

Notice that $B$ is a symmetric, positive-definite matrix with eigenvalues in $[\sqrt{\delta_1}, \sqrt{K_1}]$. Since $v(t,x)=u(t,Bx)$, we may apply  \eqref{eq:EquivalenceHolderNorms1} with $w_1=u$ and $w_2=v$ and $M=B$ to obtain
\begin{equation}
\label{eq:EquivalenceHolderNorms7}
\|u\|_{C^{\alpha}_{\rho}([0,T]\times\mathbb{R}^d)}
\leq C(1+\delta_1^{-\alpha/2}) \|v\|_{C^{\alpha}_{\rho}([0,T]\times\mathbb{R}^d)}.
\end{equation}
Because $v_t(t,x)=u_t(t,Bx)$, we have as above
\begin{equation}
\label{eq:EquivalenceHolderNorms8}
\|u_t\|_{C^{\alpha}_{\rho}([0,T]\times\mathbb{R}^d)}
\leq C(1+\delta_1^{-\alpha/2}) \|v_t\|_{C^{\alpha}_{\rho}([0,T]\times\mathbb{R}^d)}.
\end{equation}
To evaluate $u_{x_i}$, we denote by $L^i$ the $i$-th row of the matrix $B^{-1}$. Then, we have
\[
u_{x_i} = L^i \nabla v,
\]
and so,
\[
\|u_{x_i}\|_{C([0,T]\times\RR^d)} \leq \delta_1^{-1/2} \|\nabla v\|_{C([0,T]\times\RR^d)},
\]
where we have use the fact that $B^{-1}$  is a symmetric, positive-definite matrix and the eigenvalues of $B^{-1}$ are in
$
\left[K_1^{-1/2},\delta_1^{-1/2} \right].
$
Applying inequality \eqref{eq:EquivalenceHolderNorms1} to $u_{x_i}$, we obtain as above that
\begin{equation}
\label{eq:EquivalenceHolderNorms9}
\|u_{x_i}\|_{C^{\alpha}_{\rho}([0,T]\times\mathbb{R}^d)}
\leq C\delta_1^{-1/2}(1+\delta_1^{-\alpha/2}) \|v_{x_i}\|_{C^{\alpha}_{\rho}([0,T]\times\mathbb{R}^d)},
\end{equation}
and similarly, it follows for $u_{x_ix_j}$ that
\begin{equation}
\label{eq:EquivalenceHolderNorms10}
\|u_{x_ix_j}\|_{C^{\alpha}_{\rho}([0,T]\times\mathbb{R}^d)}
\leq \delta_1^{-1}(1+\delta_1^{-\alpha/2}) \|v_{x_ix_j}\|_{C^{\alpha}_{\rho}([0,T]\times\mathbb{R}^d)}.
\end{equation}
Applying \eqref{eq:EquivalenceHolderNorms2} for $\bar f(t,x)= f(t, Bx)$ with $w_1=f$ and $w_2=\bar f$ and $M=B$, we have
\begin{equation}
\label{eq:EquivalenceHolderNorms11}
\|\bar{f}\|_{C^{\alpha}_{\rho}([0,T]\times\mathbb{R}^d)}
\leq (1+K_1^{\alpha/2}) \|f\|_{C^{\alpha}_{\rho}([0,T]\times\mathbb{R}^d)},
\end{equation}
Similarly, for $\bar g(x)=g(Bx)$, we obtain
\begin{equation}
\begin{aligned}
\label{eq:EquivalenceHolderNorms12}
\|\bar{g}\|_{C^{\alpha}_{\rho}(\mathbb{R}^d)}
&\leq (1+K_1^{\alpha/2}) \|g\|_{C^{\alpha}_{\rho}([0,T]\times\mathbb{R}^d)}, \\
\|\bar{g}_{x_i}\|_{C^{\alpha}_{\rho}(\mathbb{R}^d)}
&\leq K_1^{1/2}(1+K_1^{\alpha/2}) \|g_{x_i}\|_{C^{\alpha}_{\rho}(\mathbb{R}^d)}, \\
\|\bar{g}_{x_ix_j}\|_{C^{\alpha}_{\rho}(\mathbb{R}^d)}
&\leq K_1(1+K_1^{\alpha/2}) \|g_{x_ix_j}\|_{C^{\alpha}_{\rho}(\mathbb{R}^d)}.
\end{aligned}\end{equation}
By combining the  inequalities \eqref{eq:EquivalenceHolderNorms7}, \eqref{eq:EquivalenceHolderNorms8}, \eqref{eq:EquivalenceHolderNorms9}, \eqref{eq:EquivalenceHolderNorms10}, \eqref{eq:EquivalenceHolderNorms11} and \eqref{eq:EquivalenceHolderNorms12} in \eqref{eq:ClassicalHolderConstCoeff}, we obtain \eqref{eq:SchauderEstConstCoeff}.
\end{proof}

Next, we give the proof of Proposition \ref{prop:ClassicalEstVarCoeff}. The estimate \eqref{eq:SchauderEstVarCoeff} is obtained exactly as in the proof of \cite[Theorems 9.2.2 and 8.9.2]{Krylov_LecturesHolder} using Lemma \ref{lem:ClassicalEstConstCoeff}, except that we again provide the details in order to obtain the precise dependencies of the coefficients.

\begin{proof}[Proof of Proposition \ref{prop:ClassicalEstVarCoeff}]
Due to the classical interpolation inequalities \cite[Theorem 8.8.1]{Krylov_LecturesHolder} and the classical maximum principle for unbounded domains \cite[Corollary 8.1.5]{Krylov_LecturesHolder}, it suffices to prove that the estimate \eqref{eq:SchauderEstVarCoeff} holds with
\[
\left[u_t\right]_{C^{\alpha}_{\rho}([0,T]\times\RR^d)} \text{  and  } \left[u_{x_ix_j}\right]_{C^{\alpha}_{\rho}([0,T]\times\RR^d)}
\]
on the left-hand side of the inequality. We will prove this for the $C^{\alpha}_{\rho}([0,T]\times\RR^d)$-seminorm of $u_t$, but the same argument can be applied for the $C^{\alpha}_{\rho}([0,T]\times\RR^d)$-seminorm of $u_{x_ix_j}$.

%COMMENT: Changed to \bar Q in Holder norms but not seminorms and redefined Q = (0,T)\times\RR^d.
%COMMENT: We should use bar Q everywhere in this proof.
For simplicity of notation, we denote $Q:=(0,T)\times\RR^d$, and we omit the subscript $\rho$ in the definition of the H\"older spaces. We also use the simplified notation
\begin{equation}
\label{eq:HigherHolderNorm}
[u]_{C^{2+\alpha}(\bar Q)} := \left[u_t\right]_{C^{\alpha}(\bar Q)} + \left[u_{x_ix_j}\right]_{C^{\alpha}(\bar Q)}.
\end{equation}
Let $u \in C^{2+\alpha}(\bar Q)$ be a solution to problem \eqref{eq:NonHomIniValConstCoeff}. Then,
\begin{equation}
\label{eq:AuxDefinitionBarU}
\bar u:=e^{-\lambda_1 t} u
\end{equation}
is in $C^{2+\alpha}(\bar Q)$ and it solves
\begin{equation*}
\begin{aligned}
\begin{cases}
         \left(-\bar{L}- \lambda_1\right)\bar u = -e^{-\lambda_1 t}f & \mbox{ on } (0,T)\times\mathbb{R}^d, \\
         \bar u(0,\cdot)=g  & \mbox{ on } \RR^d,
\end{cases}
\end{aligned}
\end{equation*}
where $\lambda_1$ is the upper bound on the zeroth-order coefficient, $\bar c$, assumed in \eqref{eq:UpperBoundBarC}. We may apply \cite[Corollary 8.1.5]{Krylov_LecturesHolder}, because the zeroth-order term of the parabolic operator $-\bar L -\lambda_1$ is non-positive, and we obtain
\[
\|\bar u\|_{C(\bar Q)} \leq T\|e^{-\lambda_1 t} f\|_{C(\bar Q)} + \|g\|_{C(\bar Q)} \leq  T\| f\|_{C(\bar Q)} + \|g\|_{C(\bar Q)}.
\]
Thus, it follows by \eqref{eq:AuxDefinitionBarU} that
\begin{equation}
\label{eq:BoundOnTheSupNorm}
\|u\|_{C(\bar Q)} \leq e^{\lambda_1 T } \left(T\| f\|_{C(\bar Q)} + \|g\|_{C(\bar Q)}\right).
\end{equation}
Let $z_1,z_2\in [0,T]\times\RR^d$ be two points such that
\begin{equation}
\label{eq:AuxEq1}
\frac{|u_t(z_1)-u_t(z_2)|}{\rho^{\alpha}(z_1,z_2)} \geq \frac{1}{2} [u_t]_{C^{\alpha}(\bar Q)}.
\end{equation}
Let $\gamma>0$ be a constant which will be suitably chosen below. We consider two cases.
\setcounter{case}{0}
\begin{case}[$\rho(z_1,z_2)\geq \gamma$] Then, we have
\[
[u_t]_{C^{\alpha}(\bar Q)} \leq 2 \gamma^{-\alpha} |u_t|_{C(\bar Q)},
\]
and, by \cite[Theorem 8.8.1, Inequality (8.8.1)]{Krylov_LecturesHolder}, it follows, for all $\eps>0$, that
\[
[u_t]_{C^{\alpha}(\bar Q)} \leq 2 \gamma^{-\alpha} \left(\eps [u]_{C^{2+\alpha}(\bar Q)} + C\eps^{-\alpha/2} |u|_{C(\bar Q)}\right).
\]
By choosing $\eps:=\gamma^{\alpha}/8$ and by inequality \eqref{eq:BoundOnTheSupNorm}, we obtain
\begin{equation}
\label{eq:EstimateCase1}
[u_t]_{C^{\alpha}(\bar Q)} \leq \frac{1}{4} [u]_{C^{2+\alpha}(\bar Q)} + C\gamma^{-(\alpha+\alpha^2/2)} e^{\lambda_1 T } \left(T\| f\|_{C(\bar Q)} + \|g\|_{C(\bar Q)}\right),
\end{equation}
where $C=C(d,\alpha)$.
\end{case}

\begin{case}[$\rho(z_1,z_2)< \gamma$]
We denote $z=(t,x)$. Let $\zeta:\RR^{d+1}\rightarrow[0,1]$ be a smooth cutoff function such that
\[
\zeta(z) = 1 \quad\hbox{if}\quad \rho(z,z_1)\leq 1 \quad\hbox{and}\quad \zeta(z) = 0  \quad\hbox{if}\quad \rho(z,z_1)\geq 2,
\]
and we define $\varphi$ by
\[
\varphi(z) := \zeta ((t-t_1)/\gamma^2, (x-x_1)/\gamma), \quad \forall\, z \in \RR^{d+1},
\]
so that,
\begin{equation}
\label{eq:SupportVarphi}
\varphi(z) = 1 \quad\hbox{if}\quad \rho(z,z_1)\leq \gamma \quad\hbox{and}\quad \varphi(z) = 0 \quad\hbox{if}\quad \rho(z,z_1)\geq 2\gamma,
\end{equation}
It is straightforward to see that $\varphi$ satisfies
\begin{equation}
\label{eq:HolderNormVarphi}
\|\varphi\|_{C^{2+\alpha}(\RR^{d+1})} \leq  C \left(1+\gamma^{-(2+\alpha)}\right),
\end{equation}
where $C$ is a positive constant. Since $z_2 \in \{\varphi=1\}$, we obtain by \eqref{eq:AuxEq1} that
\begin{equation}
\label{eq:AuxEq2}
[u_t]_{C^{\alpha}(\bar Q)} \leq 2\frac{|u_t(z_1)-u_t(z_2)|}{\rho^{\alpha}(z_1,z_2)} \leq 2[(u\varphi)_t]_{C^{\alpha}(\bar Q)}.
\end{equation}
Let $\bar L_0$ denote the differential operator, with constant coefficients, of the type considered in Lemma \ref{lem:ClassicalEstConstCoeff},
\begin{equation}
\label{eq:DefinitionL_0}
-\bar L_0 = -\partial_t + \sum_{i,j=1}^d \bar a^{ij}(z_1) \partial_{x_ix_j}.
\end{equation}
Estimate \eqref{eq:SchauderEstConstCoeff} shows that there are constants $p_1=p_1(\alpha)$ and $C=C(d,\alpha,T)$ such that
\begin{equation}
\label{eq:AuxEq3}
[(u\varphi)_t]_{C^{\alpha}(\bar Q)} \leq C\left(1+\delta_1^{-p_1}+K_1^{p_1}\right) \left(\|\bar L_0 (u\varphi)\|_{C^{\alpha}(\bar Q)}
 + \| g\varphi\|_{C^{2+\alpha}(\{0\}\times\RR^d)}\right).
\end{equation}
By \eqref{eq:HolderNormVarphi}, we obtain
\begin{equation}
\label{eq:AuxEq4}
\| g\varphi\|_{C^{2+\alpha}(\{0\}\times\RR^d)} \leq C\left(1+\gamma^{-(2+\alpha)}\right) \|g\|_{C^{2+\alpha}(\RR^d)}.
\end{equation}
By writing $\bar L_0(u\varphi) = L(u\varphi) + (\bar L_0-L)(u\varphi)$, we have
\begin{equation}
\label{eq:AuxEq5}
\bar L_0(u\varphi) = L(u\varphi) + \sum_{i,j=1}^d \left(\bar a^{ij}(z)-\bar a^{ij}(z_1)\right) (u\varphi)_{x_ix_j} + \sum_{i=1}^d \bar b^i(z) (u\varphi)_{x_i} + \bar c(z) u\varphi.
\end{equation}
We may write
\[
L(u\varphi) = \varphi Lu + \sum_{i,j=1}^d \bar a^{ij}(z) \varphi_{x_j} u_{x_i}
+ \left(\sum_{i,j=1}^d \bar a^{ij}(z) \varphi_{x_ix_j} + \sum_{i=1}^d \bar b^i(z) \varphi_{x_i} + \bar c(z)\right) u
\]
and so, by \eqref{eq:HolderNormVarphi} and \eqref{eq:GlobalHolderContinuityBoundedCoeff}, we find there is a positive constant $C=C(d)$ such that
\begin{equation}
\label{eq:AuxEq6}
\begin{aligned}
\|L(u\varphi)\|_{C^{\alpha}(\bar Q)} &\leq C \left(1+\gamma^{-(2+\alpha)}\right)\|Lu\|_{C^{\alpha}(\bar Q)}\\
&\quad + C K_1  \left(1+\gamma^{-(2+\alpha)}\right) \left(\|u_{x_i}\|_{C^{\alpha}(\bar Q)}+\|u\|_{C^{\alpha}(\bar Q)}\right).
\end{aligned}
\end{equation}
Notice that we may write the difference as
\begin{align*}
\bar L_0(u\varphi) - L(u\varphi)
&= \sum_{i,j=1}^d \left(\bar a^{ij}(z)-\bar a^{ij}(z_1)\right) \varphi u_{x_ix_j} \\
&\quad + \sum_{i=1}^d \left( \sum_{j=1}^d \left(\bar a^{ij}(z)-\bar a^{ij}(z_1)\right) \varphi_{x_j}+\bar b^i(z)\varphi \right) u_{x_i} \\
&\quad + \left(\sum_{i,j=1}^d \left(\bar a^{ij}(z)-\bar a^{ij}(z_1)\right) (\varphi)_{x_ix_j} + \sum_{i=1}^d \bar b^i(z) \varphi_{x_i} + \bar c(z)\varphi\right)u.
\end{align*}
By \eqref{eq:GlobalHolderContinuityBoundedCoeff}, \eqref{eq:SupportVarphi} and \eqref{eq:HolderNormVarphi}, we see that
\begin{align*}
\|\left(\bar a^{ij}(z)-\bar a^{ij}(z_1)\right) \varphi u_{x_ix_j}\|_{C^{\alpha}(\bar Q)}
\leq C K_1 \gamma^{\alpha} [u_{x_ix_j}]_{C^{\alpha}(\bar Q)} + C K_1(1+\gamma^{-(2+\alpha)}) \|u_{x_ix_j}\|_{C(\bar Q)}.
\end{align*}
From an argument similar to that used to obtain \eqref{eq:AuxEq6}, we have
\begin{equation}
\label{eq:AuxEq7}
\begin{aligned}
\|\bar L_0(u\varphi) - L(u\varphi)\|_{C^{\alpha}(\bar Q)}
&\leq C K_1 \gamma^{\alpha} [u_{x_ix_j}]_{C^{\alpha}(\bar Q)}
\\
&\quad + C K_1(1+\gamma^{-(2+\alpha)}) \left(\|u_{x_ix_j}\|_{C(\bar Q)}+\|u_{x_i}\|_{C^{\alpha}(\bar Q)}+\|u\|_{C^{\alpha}(\bar Q)}\right).
\end{aligned}
\end{equation}
Estimates \eqref{eq:AuxEq6} and \eqref{eq:AuxEq7}, give us, by \eqref{eq:AuxEq5}, that
\begin{equation}
\label{eq:AuxEq8}
\begin{aligned}
\|\bar L_0(u\varphi)\|_{C^{\alpha}(\bar Q)}
&\leq C \left(1+\gamma^{-(2+\alpha)}\right)\|Lu\|_{C^{\alpha}(\bar Q)}
 + C K_1 \gamma^{\alpha} [u_{x_ix_j}]_{C^{\alpha}(\bar Q)}  \\
&\quad + C K_1  \left(1+\gamma^{-(2+\alpha)}\right) \left(\|u_{x_ix_j}\|_{C(\bar Q)}+ \|u_{x_i}\|_{C^{\alpha}(\bar Q)}+\|u\|_{C^{\alpha}(\bar Q)}\right).
\end{aligned}
\end{equation}
Combining the preceding inequality, estimates \eqref{eq:AuxEq3} and \eqref{eq:AuxEq4} in \eqref{eq:AuxEq2}, and using definition \eqref{eq:HigherHolderNorm}, it follows that
\begin{align*}
[u_t]_{C^{\alpha}(\bar Q)}
&\leq C \left(1+\delta_1^{-p_1}+K_1^{p_1}\right)
\left(\left(1+\gamma^{-(2+\alpha)}\right)\|\bar Lu\|_{C^{\alpha}(\bar Q)} \right.\\
& \quad
 + K_1 \gamma^{\alpha} [u]_{C^{2+\alpha}(\bar Q)}  \\
&\quad + K_1  \left(1+\gamma^{-(2+\alpha)}\right) \left(\|u_{x_ix_j}\|_{C(\bar Q)}+ \|u_{x_i}\|_{C^{\alpha}(\bar Q)}+\|u\|_{C^{\alpha}(\bar Q)}\right)\\
&\quad \left.
+\left(1+\gamma^{-(2+\alpha)}\right) \|g\|_{C^{2+\alpha}(\RR^d)}
\right),
\end{align*}
where $C=C(d,\alpha,T)$. The interpolation inequalities \cite[Theorem 8.8.1]{Krylov_LecturesHolder} and the maximum principle \cite[Corollary 8.1.5]{Krylov_LecturesHolder}, give us, for any $\eps>0$,
\begin{align*}
[u_t]_{C^{\alpha}(\bar Q)}
&\leq C  \left(1+\delta_1^{-p_1}+K_1^{p_1}\right)
\\
&\quad\times\left[e^{\lambda_1 T}\left(1+K_1 \eps^{-m}\right)\left(1+\gamma^{-(2+\alpha)}\right)\left(\|f\|_{C^{\alpha}(\bar Q)} +\|g\|_{C^{2+\alpha}(\RR^d)} \right)\right.
\\
& \quad + K_1 \left.\left( \gamma^{\alpha} +\eps\left(1+\gamma^{-(2+\alpha)}\right) \right)[u]_{C^{2+\alpha}(\bar Q)}\right],
\end{align*}
where $m=m(\alpha)$. We choose $\gamma\in (0,1)$ such that
\[
C \left(1+\delta_1^{-p_1}+K_1^{p_1}\right) K_1 \gamma^{\alpha} \leq \frac{1}{16},
\]
as for instance,
\begin{equation}
\label{eq:ChoiceGamma}
\gamma : = \left(\frac{1}{48C} \min \left\{K^{-1}_1, K^{-1}_1 \delta_1^{p_1}, K^{-(1+p_1)}_1 \right\}\right)^{1/\alpha}\wedge 1.
\end{equation}
Then, we choose $\eps>0$ such that
\[
C \left(1+\delta_1^{-p_1}+K_1^{p_1}\right) \left(1+\gamma^{-(2+\alpha)}\right) K_1 \eps \leq \frac{1}{16}.
\]
A suitable choice is
\begin{equation}
\label{eq:ChoiceEpsilon}
\eps:= \frac{1}{96C} (1+\gamma^{2+\alpha})\min\left\{K^{-1}_1, K^{-1}_1 \delta_1^{p_1}, K^{-(1+p_1)}_1\right\}
\end{equation}
Then, we obtain
\begin{equation}
\label{eq:EstimateCase2}
\begin{aligned}
\left[u_t\right]_{C^{\alpha}(\bar Q)}
&\leq \frac{1}{4} [u]_{C^{2+\alpha}(\bar Q)}
+ C e^{\lambda_1 T} \left(1+\delta_1^{-p_1}+K_1^{p_1}\right)\left(1+K_1 \eps^{-m}\right)\left(1+\gamma^{-(2+\alpha)}\right)
\\
&\qquad\times\left(\|f\|_{C^{\alpha}(\bar Q)} +\|g\|_{C^{2+\alpha}(\RR^d)} \right).
\end{aligned}
\end{equation}
\end{case}

By combining inequalities \eqref{eq:EstimateCase1} and \eqref{eq:EstimateCase2} from the preceding two cases, we obtain the global estimate
\begin{equation}
\label{eq:GlobalEstimateCase2}
\begin{aligned}
\left[u_t\right]_{C^{\alpha}(\bar Q)}
&\leq \frac{1}{4} [u]_{C^{2+\alpha}(\bar Q)}
+ C e^{\lambda_1 T} \left(1+\delta_1^{-p_1}+K_1^{p_1}\right)\left(1+K_1 \eps^{-m}\right)\left(1+\gamma^{-(2+\alpha)}\right)
\\
&\qquad \times\left(\|f\|_{C^{\alpha}(\bar Q)} +\|g\|_{C^{2+\alpha}(\RR^d)} \right).
\end{aligned}
\end{equation}
We notice from \eqref{eq:ChoiceGamma} and \eqref{eq:ChoiceEpsilon} that we may find positive constants $N_3=N_3(d,\alpha,T)$ and $p=p(\alpha)$ such that
\begin{equation*}
\left[u_t\right]_{C^{\alpha}(\bar Q)}
\leq \frac{1}{4} [u]_{C^{2+\alpha}(\bar Q)} + N_3 e^{\lambda_1 T} \left(1+\delta_1^{-p}+K_1^{p}\right)
\left(\|f\|_{C^{\alpha}(\bar Q)} +\|g\|_{C^{2+\alpha}(\RR^d)} \right).
\end{equation*}
The similar argument applied to $[u_{x_ix_j}]_{C^{\alpha}(\bar Q)}$ yields
\begin{equation*}
\left[u_{x_ix_j}\right]_{C^{\alpha}(\bar Q)}
\leq \frac{1}{4} [u]_{C^{2+\alpha}(\bar Q)} + N_3 e^{\lambda_1 T} \left(1+\delta_1^{-p}+K_1^{p}\right)
\left(\|f\|_{C^{\alpha}(\bar Q)} +\|g\|_{C^{2+\alpha}(\RR^d)} \right).
\end{equation*}
Therefore, \eqref{eq:HigherHolderNorm} gives us
\begin{equation*}
\left[u\right]_{C^{2+\alpha}(\bar Q)}
\leq N_3 e^{\lambda_1 T} \left(1+\delta_1^{-p}+K_1^{p}\right)
\left(\|f\|_{C^{\alpha}(\bar Q)} +\|g\|_{C^{2+\alpha}(\RR^d)} \right),
\end{equation*}
which concludes the proof of the proposition by the interpolation inequalities \cite[Theorem 8.8.1]{Krylov_LecturesHolder} and the maximum principle estimate \eqref{eq:BoundOnTheSupNorm}.
\end{proof}

%%%%%%%%%%%%%%%%%%%%%%%%%%%%%%%%%%%%%%%%%%%%%%%%%%%%%%%%%%%%%%%%%%%%%%%%%%%%%%%
%
%                                bibliography
%
%%%%%%%%%%%%%%%%%%%%%%%%%%%%%%%%%%%%%%%%%%%%%%%%%%%%%%%%%%%%%%%%%%%%%%%%%%%%%%%

\bibliography{mfpde}
%\bibliography{../Bibinputs/mfpde}
\bibliographystyle{amsplain}

\end{document}